\newcommand{\remove}[1]{}
\newcommand{\ce}{\mathrm{e}}
\renewcommand{\deg}{\operatorname{deg}}
\renewcommand{\leq}{\leqslant}
\renewcommand{\geq}{\geqslant}
\newcommand{\Oh}{\mathcal{O}}
\newcommand{\Hmax}[2]{\mathsf{H}_{\max}}
\newcommand{\cov}{\mathrm{cov}}
\newcommand{\lemref}[1]{Lemma~\ref{lem:#1}}
\newcommand{\Pro}[1]{\mathbf{Pr} \left[\,#1\,\right]}
\newcommand{\Mid}{\,\middle\vert\,}
\newcommand{\Ex}[1]{\mathbf{E} \left[\,#1\,\right]}
\newcommand{\ind}[1]{\mathbf{1}\left(#1\right)}
\renewcommand{\tilde}{\widetilde}
\renewcommand{\hat}{\widehat}
\renewcommand{\bar}{\overline}
\renewcommand{\epsilon}{\varepsilon}
\newtheorem{thm}{Theorem}
\newtheorem{lem}[thm]{Lemma}
\newtheorem{rem}[thm]{Remark}
\newtheorem{pro}[thm]{Proposition}
\renewcommand{\tilde}{\widetilde}
\numberwithin{thm}{section}
\numberwithin{equation}{section}
\begin{document}

\title{Balls into bins via local search: cover time and maximum load\footnote{A preliminary version of this paper appeared in the 31st Symposium on Theoretical Aspects of Computer Science (STACS'14).}}

\author{
Karl Bringmann\thanks{Max Planck Institute for Informatics, Saarbr\"ucken, Germany.\ \ Email:
\hbox{karl.bringmann@mpi-inf.mpg.de}} \and
Thomas Sauerwald\thanks{Computer Laboratory, University of Cambridge, UK. \ \ Email: \hbox{thomas.sauerwald@cl.cam.ac.uk}} \and
Alexandre Stauffer\thanks{Department of Mathematical Sciences, University of Bath, UK.\ \ Email: \hbox{a.stauffer@bath.ac.uk}.
Supported in part by a Marie Curie Career Integration Grant PCIG13-GA-2013-618588 DSRELIS.} \and
He Sun\thanks{Max Planck Institute for Informatics, Saarbr\"ucken, Germany.\ \ Email:
\hbox{hsun@mpi-inf.mpg.de}}
}
\date{}

\maketitle

\begin{abstract} 
We study a natural process for allocating $m$ balls into $n$ bins that are organized as the vertices of an undirected graph $G$.
Balls arrive one at a time. When a ball arrives, it first chooses a vertex $u$ in $G$ uniformly at random.
Then the ball performs a local search in $G$ starting from $u$ until it reaches a vertex with local minimum load, where the ball is finally placed on.
Then the next ball arrives and this procedure is repeated.
For the case $m=n$, we give an upper bound for the maximum load on graphs with bounded degrees.
We also propose the study of the \emph{cover time} of this process, which is defined as the smallest $m$ so that every bin has at least one ball allocated to
it. We establish an upper bound for the cover time on graphs with bounded degrees. Our bounds for the maximum load and the cover time are tight when the
graph is vertex transitive or sufficiently homogeneous. We also give upper bounds for the maximum load when $m\geq n$.
\newline
\newline
\emph{Keywords and phrases.} balls-into-bins, load balancing, stochastic process, local search.
\end{abstract}


\section{Introduction}
A very simple procedure for allocating $m$ balls into $n$ bins is to place each ball into a bin chosen independently
and uniformly at random. We refer to this process as
\emph{1-choice process}. It is well known that, when $m=n$, the maximum load
for the 1-choice process (i.e., the maximum number of balls allocated to any single bin) is
$\Theta\left(\frac{\log n}{\log\log n}\right)$~\cite{RS98}.
Alternatively, in the $d$-choice process, balls arrive sequentially one after the other, and
when a ball arrives, it chooses $d$ bins independently and uniformly at random,
and places itself in the bin that currently has the smallest load among the $d$ bins (ties are broken uniformly at random).
It was shown by Azar et al.~\cite{ABKU99} and Karp et al.~\cite{KLH96} that the maximum load for the $d$-choice process with $m=n$ and $d\geq 2$
is $\Theta\left(\frac{\log\log n}{\log d}\right)$. The constants omitted in the $\Theta$ are known and, as shown by V\"ocking~\cite{V03},
they can be reduced with a slight modification of the $d$-choice process.
Berenbrink et al.~\cite{BCSV06} extended these results to the case $m\gg n$.

In some applications, it is important to allow each ball to choose bins in a \emph{correlated} way.
For example, such correlations occur naturally in distributed systems, where the bins represent
processors that are interconnected as a graph and the balls represent tasks that need to be
assigned to processors. From a practical point of view, letting each task choose $d$ independent
random bins may be undesirable, since the cost of accessing two bins which are far away in the
graph may be higher than accessing two bins which are nearby. Furthermore, in some contexts,
tasks are actually created by the processors, which are then able to forward tasks to other
processors to achieve a more balanced load distribution. In such settings, allocating balls close
to the processor that created them is certainly very desirable as it reduces the costs of probing the
load of a processor and allocating the task.

With this motivation in mind, Bogdan et al.~\cite{BSSS13} introduced a natural allocation process called
\emph{local search allocation}. Consider that the bins are organized as the vertices of a graph $G=(V,E)$ with $n=|V|$.
At each step a ball is ``born'' at a vertex chosen independently and uniformly at random from $V$, which we call the birthplace of the ball.
Then, starting from its birthplace, the ball performs a local search in $G$, where
in each step the ball moves to the adjacent vertex with the smallest load, provided that the load
is strictly smaller than the load of the vertex the ball is currently in.
We assume that ties are broken independently and uniformly at random.
The local search ends when the ball visits
the first vertex that is a local minimum, which is a vertex for which no neighbor has a smaller
load. After that, the next ball is born and the procedure above is repeated. See Figure~\ref{fig:localsearch} for an illustration.

\tikzstyle{knoten}=[circle, color=black, inner sep=2pt, fill=black]
\tikzstyle{edge}=[line width=0.5pt]

\tikzstyle{knoten}=[circle, color=black, inner sep=2pt, fill=black]
\tikzstyle{edge}=[line width=0.5pt]

\newcommand{\ball}[1]{ \shade[ball color=yellow, opacity=0.5] (#1) circle (.25cm) }
\newcommand{\nball}[1]{ \shade[ball color=red] (#1) circle (.25cm) }
\newcommand{\gball}[1]{ \draw [color=black, line width=0.7pt, fill=white,dashed, dash pattern=on 1pt off 1pt] (#1) circle (.22cm) }

\begin{figure}[t]

\begin{tikzpicture}[auto, scale=0.6,  knoten/.style={
           draw=black, fill=black, thin, circle, inner sep=0.05cm}]
           \draw[white] (0.8,2) -- (7,5);
           \node[knoten] (1) at (1,2) [label=below:$1$]{};
           \node[knoten] (2) at (2,2) [label=below:$2$]{};
           \node[knoten] (3) at (3,2) [label=below:$3$]{};
           \node[knoten] (4) at (4,2) [label=below:$4$]{};
           \node[knoten] (5) at (5,2) [label=below:$5$]{};
           \node[knoten] (6) at (6,2) [label=below:$6$]{};
           \node[] (7) at (3.5,1) [label=below:$(a)$]{};
           \draw[edge] (1) -- (2) -- (3) -- (4) -- (5) -- (6);
           \draw [color=black] (1)++(0.33,2.85) to ++(0,-2.6)  to ++(-0.7,0) to ++ (0,2.6);
           \draw [color=black] (2)++(0.33,2.85) to ++(0,-2.6)  to ++(-0.7,0) to ++ (0,2.6);
           \draw [color=black] (3)++(0.33,2.85) to ++(0,-2.6)  to ++(-0.7,0) to ++ (0,2.6);
           \draw [color=black] (4)++(0.33,2.85) to ++(0,-2.6)  to ++(-0.7,0) to ++ (0,2.6);
           \draw [color=black] (5)++(0.33,2.85) to ++(0,-2.6)  to ++(-0.7,0) to ++ (0,2.6);
           \draw [color=black] (6)++(0.33,2.85) to ++(0,-2.6)  to ++(-0.7,0) to ++ (0,2.6);

           \ball{1,2.6};
           \ball{2,2.6};
           \ball{3,2.6};
           \ball{3,3.1};
           \ball{4,2.6};
           \ball{4,3.1};
           \ball{4,3.6};
           \ball{5,2.6};
           \ball{5,3.1};
           \ball{6,2.6};
           \ball{6,3.1};
           \ball{6,3.6};
\end{tikzpicture}
\begin{tikzpicture}[auto, scale=0.6,  knoten/.style={
           draw=black, fill=black, thin, circle, inner sep=0.05cm}]
                     \draw[white] (0.5,2) -- (7,5);
           \node[knoten] (1) at (1,2) [label=below:$1$]{};
           \node[knoten] (2) at (2,2) [label=below:$2$]{};
           \node[knoten] (3) at (3,2) [label=below:$3$]{};
           \node[knoten] (4) at (4,2) [label=below:$4$]{};
           \node[knoten] (5) at (5,2) [label=below:$5$]{};
           \node[knoten] (6) at (6,2) [label=below:$6$]{};
               \node[] (7) at (3.5,1) [label=below:$(b)$]{};
           \draw[edge] (1) -- (2) -- (3) -- (4) -- (5) -- (6);
           \draw [color=black] (1)++(0.33,2.85) to ++(0,-2.6)  to ++(-0.7,0) to ++ (0,2.6);
           \draw [color=black] (2)++(0.33,2.85) to ++(0,-2.6)  to ++(-0.7,0) to ++ (0,2.6);
           \draw [color=black] (3)++(0.33,2.85) to ++(0,-2.6)  to ++(-0.7,0) to ++ (0,2.6);
           \draw [color=black] (4)++(0.33,2.85) to ++(0,-2.6)  to ++(-0.7,0) to ++ (0,2.6);
           \draw [color=black] (5)++(0.33,2.85) to ++(0,-2.6)  to ++(-0.7,0) to ++ (0,2.6);
           \draw [color=black] (6)++(0.33,2.85) to ++(0,-2.6)  to ++(-0.7,0) to ++ (0,2.6);
           \draw[dashed, line width=0.7pt, ->,  dash pattern=on 1pt off 1pt] (4,4.1) -- (3.2,3.7);
           \draw[dashed, line width=0.7pt, ->,  dash pattern=on 1pt off 1pt] (3,3.6) -- (2.2,3.2);
           \draw[line width=1pt, ->] (4,5.5) -- (4,4.4);
           \node[] (7) at (4,6.7) [label=below:ball $i$]{};
           \ball{1,2.6};
           \ball{2,2.6};
           \nball{2,3.1};
           \ball{3,2.6};
           \ball{3,3.1};
           \gball{3,3.6};
           \ball{4,2.6};
           \ball{4,3.1};
           \ball{4,3.6};
           \gball{4,4.1};
           \ball{5,2.6};
           \ball{5,3.1};
           \ball{6,2.6};
           \ball{6,3.1};
           \ball{6,3.6};
\end{tikzpicture}
\begin{tikzpicture}[auto, scale=0.6,  knoten/.style={
           draw=black, fill=black, thin, circle, inner sep=0.05cm}]
                     \draw[white] (0.5,2) -- (7,5);
           \node[knoten] (1) at (1,2) [label=below:$1$]{};
           \node[knoten] (2) at (2,2) [label=below:$2$]{};
           \node[knoten] (3) at (3,2) [label=below:$3$]{};
           \node[knoten] (4) at (4,2) [label=below:$4$]{};
           \node[knoten] (5) at (5,2) [label=below:$5$]{};
           \node[knoten] (6) at (6,2) [label=below:$6$]{};
               \node[] (7) at (3.5,1) [label=below:$(c)$]{};
           \draw[edge] (1) -- (2) -- (3) -- (4) -- (5) -- (6);
           \draw [color=black] (1)++(0.33,2.85) to ++(0,-2.6)  to ++(-0.7,0) to ++ (0,2.6);
           \draw [color=black] (2)++(0.33,2.85) to ++(0,-2.6)  to ++(-0.7,0) to ++ (0,2.6);
           \draw [color=black] (3)++(0.33,2.85) to ++(0,-2.6)  to ++(-0.7,0) to ++ (0,2.6);
           \draw [color=black] (4)++(0.33,2.85) to ++(0,-2.6)  to ++(-0.7,0) to ++ (0,2.6);
           \draw [color=black] (5)++(0.33,2.85) to ++(0,-2.6)  to ++(-0.7,0) to ++ (0,2.6);
           \draw [color=black] (6)++(0.33,2.85) to ++(0,-2.6)  to ++(-0.7,0) to ++ (0,2.6);
           \draw[dashed, line width=0.7pt, ->,  dash pattern=on 1pt off 1pt] (4.2,4.1) -- (4.8,3.7);
           \draw[line width=1pt, ->] (4,5.5) -- (4,4.4);
           \node[] (7) at (4,6.7) [label=below:ball $i+1$]{};
           \ball{1,2.6};
           \ball{2,2.6};
           \ball{2,3.1};
           \ball{3,2.6};
           \ball{3,3.1};
           \ball{4,2.6};
           \ball{4,3.1};
           \ball{4,3.6};
           \gball{4,4.1};
           \ball{5,2.6};
           \ball{5,3.1};
           \nball{5,3.6};
           \ball{6,2.6};
           \ball{6,3.1};
           \ball{6,3.6};
\end{tikzpicture}
\begin{tikzpicture}[auto, scale=0.6,  knoten/.style={
           draw=black, fill=black, thin, circle, inner sep=0.05cm}]
                     \draw[white] (0.5,2) -- (7,5);
           \node[knoten] (1) at (1,2) [label=below:$1$]{};
           \node[knoten] (2) at (2,2) [label=below:$2$]{};
           \node[knoten] (3) at (3,2) [label=below:$3$]{};
           \node[knoten] (4) at (4,2) [label=below:$4$]{};
           \node[knoten] (5) at (5,2) [label=below:$5$]{};
           \node[knoten] (6) at (6,2) [label=below:$6$]{};
           \node[] (7) at (3.5,1) [label=below:$(d)$]{};
           \draw[edge] (1) -- (2) -- (3) -- (4) -- (5) -- (6);
           \draw [color=black] (1)++(0.33,2.85) to ++(0,-2.6)  to ++(-0.7,0) to ++ (0,2.6);
           \draw [color=black] (2)++(0.33,2.85) to ++(0,-2.6)  to ++(-0.7,0) to ++ (0,2.6);
           \draw [color=black] (3)++(0.33,2.85) to ++(0,-2.6)  to ++(-0.7,0) to ++ (0,2.6);
           \draw [color=black] (4)++(0.33,2.85) to ++(0,-2.6)  to ++(-0.7,0) to ++ (0,2.6);
           \draw [color=black] (5)++(0.33,2.85) to ++(0,-2.6)  to ++(-0.7,0) to ++ (0,2.6);
           \draw [color=black] (6)++(0.33,2.85) to ++(0,-2.6)  to ++(-0.7,0) to ++ (0,2.6);
           \draw[line width=1pt, ->] (3,5.5) -- (3,3.9);
           \node[] (7) at (3,6.7) [label=below:ball $i+2$]{};
           \ball{1,2.6};
           \ball{2,2.6};
           \ball{2,3.1};
           \ball{3,2.6};
           \ball{3,3.1};
           \ball{4,2.6};
           \ball{4,3.1};
           \ball{4,3.6};
           \ball{5,2.6};
           \ball{5,3.1};
           \ball{5,3.6};
           \nball{3,3.6};
           \ball{6,2.6};
           \ball{6,3.1};
           \ball{6,3.6};
\end{tikzpicture}

\caption{Illustration of the local search allocation. Black circles represent the vertices 1--6 arranged as a path, and the yellow circles represent the balls of the process (the most recently allocated ball is marked red). Figure~$(a)$ shows the configuration after placing $i-1$ balls. As shown in Figure~$(b)$, ball $i$ born at vertex $4$ has two choices in the first step of the local search (vertices~$3$ or~$5$) and is finally allocated to vertex~$2$. Figure~$(c)$ and $(d)$ shows the placement of ball $i+1$ and $i+2$.
}

  \label{fig:localsearch}

\end{figure}
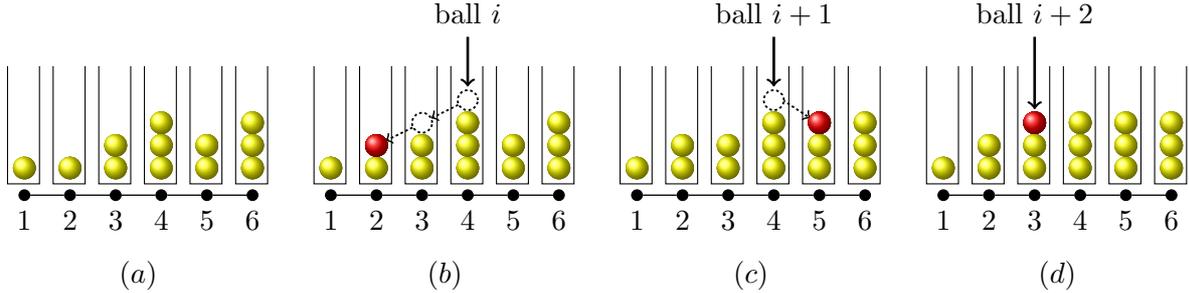

The main result in~\cite{BSSS13} establishes that when $G$ is an expander graph with bounded maximum degree, the maximum load after $n$ balls have been
allocated is $\Theta(\log\log n)$. Hence, local search allocation on bounded-degree expanders achieves the
same maximum load (up to constants) as in the $d$-choice process, but has the extra benefit of requiring only local information during the allocation. In~\cite{BSSS13}, it was also established that the maximum load is
$\Theta\left(\left(\frac{\log n}{\log\log n}\right)^\frac{1}{d+1}\right)$ on $d$-dimensional grids, and $\Theta(1)$ on regular graphs of degrees $\Omega(\log n)$.

\subsection{Results}
In this paper we derive upper and lower bounds for the maximum load, and propose the study of another natural quantity, which we refer to as the \emph{cover time}.
In order to state our results, we need to introduce the following two quantities that are related to the local neighborhood growth of $G$:
$$
   R_1 = R_1(G) = \min\{r\in \mathbb{N} \colon r |B_u^{r}|\log r \geq \log n \text{ for all $u\in V$}\}
$$
and
$$
   R_2 = R_2(G) = \min\{r\in \mathbb{N} \colon r |B_{u}^{r}| \geq \log n \text{ for all $u\in V$}\},
$$
where $B_u^{r}$ denotes the set of vertices within distance at most $r$ from vertex $u$.
Note that $R_1 \leq R_2$ for all $G$.
For the sake of clarity, we state our results here for \emph{vertex-transitive} graphs only.
In later sections we state our results in fullest generality, which will require a more refined definition of $R_1$ and $R_2$. We also highlight that for all the results below (and throughout this paper) we assume that ties are broken independently and uniformly at random; the impact of tie-breaking procedures in local search allocation was investigated in~\cite[Theorem~1.5]{BSSS13}.

\subsubsection*{Maximum load}
We derive an upper bound for the maximum load after $n$ balls have been allocated. Our bound holds for \emph{all} bounded-degree graphs, and is tight for vertex-transitive graphs (and, more generally, for graphs where the neighborhood growth is sufficiently homogeneous across different vertices).
\begin{thm}[Maximum load when $m=n$]\label{thm:maxload}
   Let $G$ be any vertex-transitive graph with bounded degrees. Then, with probability at least $1-n^{-1}$, the maximum load after $n$ balls have been allocated is $\Theta(R_1)$.
\end{thm}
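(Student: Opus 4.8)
\medskip
\noindent\textit{Proof plan.}

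\textbf{The pyramid lemma.} Both bounds rest on a deterministic fact about the final configuration: if a vertex $v$ ends with load $\ell$, then every vertex $w$ ends with load at least $\ell-\dist(v,w)$. Indeed, when the ball that raised $v$ to load $\ell$ was placed, $v$ was a local minimum of load $\ell-1$, so all its neighbours had load $\ge\ell-1$ at that instant, hence also at the end since loads never decrease; applying the same reasoning to the ball that first raised each such neighbour to load $\ell-1$, and iterating, gives the claim by induction on $\dist(v,w)$. In particular the sub-level set $\{w:\text{load}(w)\ge j\}$ contains $B_v^{\ell-j}$, so it has at least $|B_v^{\ell-j}|$ vertices each carrying $\ge j$ balls.

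\textbf{Upper bound $O(R_1)$.} Fix $\ell=CR_1$ with $C$ a large constant, suppose the maximum load is $\ge\ell$, and let $v$ attain it. By the pyramid lemma every vertex of $B_v^{\ell/2}$ carries $\ge\ell/2$ balls, so at least $k:=\tfrac{\ell}{2}\,|B_v^{\ell/2}|$ balls are placed in $B_v^{\ell/2}$; and since a ball descends along a strictly decreasing load-path it moves at most $(\text{load of its birthplace})\le\ell$ steps, so each of these balls is \emph{born} in a set $S$ of size $N:=|B_v^{3\ell/2}|$. As the $n$ birthplaces are i.i.d.\ uniform, this event has probability at most $\binom{n}{k}(N/n)^k\le(eN/k)^k$, and a union bound over the $n$ choices of $v$ and the $O(\log n)$ relevant values of $\ell$ finishes the argument once $k\log(k/eN)\ge 3\log n$. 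For vertex-transitive graphs of polynomial growth one has $N=\Theta(|B_v^{\ell/2}|)$, hence $k/N=\Theta(\ell)$, and $k\log\ell\gtrsim\log n$ is exactly the defining inequality for $R_1$ (with $r$ a constant multiple of $\ell$) — here the factor $\log r$ in the definition of $R_1$, as opposed to the $R_2$ one would get from a Chernoff bound that ignores the $\log$-saving, is precisely the $\log(k/N)$ term. The catch is that for graphs with fast (e.g.\ exponential) neighbourhood growth, $S$ is vastly larger than $B_v^{\ell/2}$, the ratio $k/N$ drops below $1$, and this crude counting becomes vacuous; there one must argue recursively, using that a ball born far from $v$ must itself have descended far and hence was born at a vertex of near-maximal load, which unwinds ``maximum load $\ge\ell$'' into a bounded-size witness tree of birth events still confined to $B_v^{\Theta(\ell)}$ (equivalently, a layered-induction bound showing $|\{w:\text{load}(w)\ge j{+}1\}|$ shrinks rapidly in $j$). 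Carrying this out for \emph{all} bounded-degree graphs — in particular recovering the $\Theta(\log\log n)$ bound on expanders — is the main technical obstacle.

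\textbf{Lower bound $\Omega(R_1)$.} Put $r=cR_1$ for a small constant $c$ and take a maximal set of centres $v_1,\dots,v_M$ pairwise at distance $>4r$; by vertex-transitivity $M=\Theta(n/|B_{v_1}^{4r}|)$ and the balls $B_{v_i}^{r}$ are pairwise disjoint. The key is an ``escape'' statement dual to the pyramid lemma: if no vertex of $B_{v_i}^{2r}$ ever reaches load $r$, then every ball born in $B_{v_i}^{r}$ moves fewer than $r$ steps and is therefore placed inside $B_{v_i}^{2r}$; hence on the event $E_i$ that at least $r\,|B_{v_i}^{2r}|$ balls are born in $B_{v_i}^{r}$, pigeonhole forces some vertex of $B_{v_i}^{2r}$ to reach load $\ge r=\Omega(R_1)$. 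The $E_i$ depend on the (negatively associated) ball counts in the disjoint sets $B_{v_i}^{r}$, so $\pr[\bigcap_i\overline{E_i}]\le\prod_i(1-\pr[E_i])\le e^{-M\pr[E_1]}$. Writing $N_0=|B_{v_1}^{r}|$ and $k_0=r\,|B_{v_1}^{2r}|$, a direct binomial lower bound gives $\pr[E_1]\ge\binom{n}{k_0}(N_0/n)^{k_0}(1-N_0/n)^{n-k_0}$, which up to lower-order factors is $e^{-k_0\log(k_0/N_0)}$; since $|B_{v_1}^{2r}|\le|B_{v_1}^{r}|^2$ for vertex-transitive $G$ and $R_1=O(\log n/\log\log n)$ for bounded-degree $G$, one has $k_0\log(k_0/N_0)\le r\,|B_{v_1}^{2r}|\,\bigl(\log r+2\log|B_{v_1}^{r}|\bigr)$, which by taking $c$ small enough is $\le\tfrac12\log n$, so $\pr[E_1]\ge n^{-1/2}$ — comfortably larger than $|B_{v_1}^{4r}|/n=n^{-1+o(1)}$. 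Hence $M\,\pr[E_1]=n^{\Omega(1)}\gg\log n$, and the maximum load is $\Omega(R_1)$ with probability $1-n^{-\omega(1)}$. For vertex-transitive (or sufficiently homogeneous) $G$ the growth of $|B_v^{r}|$ is regular enough that the threshold produced here matches the one defining $R_1$ up to a constant factor, so the two bounds together give load $\Theta(R_1)$ with probability $1-n^{-1}$.

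\textbf{Where the difficulty lies.} The easy counting above already yields the sharp answer on graphs of polynomial growth, and the escape/pigeonhole argument yields the matching lower bound in general; what is genuinely delicate is the upper bound on graphs with fast neighbourhood growth, where ``overfilling a ball around $v$'' is too lossy and one needs the recursive witness-tree / layered-induction analysis sketched above.
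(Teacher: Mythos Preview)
Your pyramid/smoothness observation is correct and is indeed the starting point (it is \lemref{balanced} in the paper). But the upper-bound argument has a genuine gap exactly where you flag it, and the paper's resolution is \emph{not} the witness-tree/layered-induction route you gesture at.

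\textbf{The missing idea for the upper bound.} Your direct counting --- ``at least $k=\tfrac{\ell}{2}|B_v^{\ell/2}|$ balls must be born in $B_v^{3\ell/2}$'' --- is vacuous once $|B_v^{3\ell/2}|\gg k$, which already happens on bounded-degree expanders (there $k/N\asymp \ell\,(\log n)^{-c\ell}$). You propose to rescue this by a recursive witness-tree argument; that is essentially the approach of the prior work~\cite{BSSS13}, and it is long. The present paper replaces it with a single clean device: a \emph{majorization coupling} with the $1$-choice process using a Lipschitz weight function. Setting $\mu(w)=-d_G(w,S)$ and $W_w=X_w+\mu(w)$, one shows (\lemref{majorization}) that the weighted load vector of the $1$-choice process majorizes that of local search; hence the sum of the $|S|$ largest $W$-values for local search is at most the same sum for i.i.d.\ Poisson loads shifted by~$\mu$. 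Taking $S=B_u^{\ell}$, this yields $\Pro{X_u^{(n)}\ge 2\ell}\le 4\exp\bigl(-\tfrac{\ell|B_u^{\ell}|}{14}\log\ell\bigr)+e^{-n/4}$ uniformly over all bounded-degree graphs (Proposition~\ref{pro:upperbound}), with no case split between polynomial and exponential growth. The point is that the weight $\mu(w)=-d_G(w,S)$ automatically penalizes balls born far from $S$, so one never needs to confine births to $B_v^{\Theta(\ell)}$ or build a witness tree.

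\textbf{The lower bound.} Your escape-plus-pigeonhole argument is different from the paper's and is in the right spirit, but the step ``by taking $c$ small enough, $k_0\log(k_0/N_0)\le\tfrac12\log n$'' is not justified: the quantity you need to control is $r|B^{2r}|\log\!\bigl(r|B^{2r}|/|B^r|\bigr)$, and for fast growth the logarithm is of order $r$, not $\log r$, so you cannot reduce to the defining inequality $r|B^r|\log r<\log n$ by monotonicity alone. (On expanders your event $E_i$ asks for $\sim r\,d^{2r}$ births in a set of size $d^r$, exponent $\sim r^2d^{2r}$; the constant~$c$ that works does exist, but your sketch does not produce it.) The paper instead uses the \emph{same} majorization trick in the other direction: with $\mu(w)=+d_G(v,w)$ one gets
\[
\Pro{X_v^{(n)}\ge\ell}\;\ge\;\prod_{r=0}^{\ell-1}\Pro{\mathrm{Poisson}(1)\ge \ell-r}^{\,|N_v^r|}\;\ge\;\exp\!\bigl(-2\ell|B_v^\ell|\log\ell\bigr),
\]
which for $\ell=\tfrac{\gamma}{4}R_1$ is $\ge n^{-\gamma/2}$ directly from the definition of $R_1$. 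One then shows the count of such vertices is concentrated via the Lipschitz property (\lemref{lipschitz}) and McDiarmid, rather than via negative association of disjoint ball counts.

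In short: you have the structural lemma and the correct intuition about where the difficulty lies, but you are missing the paper's central technical tool --- the weighted majorization coupling with the $1$-choice process --- which handles both directions uniformly and makes the exponential-growth case no harder than the polynomial one.
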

Theorem~\ref{thm:maxload} is a special case of Theorem~\ref{thm:maxload2},
which gives a more precise version of the result above and generalizes it to non-transitive graphs;
in particular, we obtain that for any graph with bounded degrees the maximum load is $\Oh(R_1)$ with high probability.
We state and prove Theorem~\ref{thm:maxload2}
in Section~\ref{sec:maxload}.

Note that for bounded-degree expanders we have $R_1=\Theta(\log\log n)$, and for $d$-dimensional grids we have $R_1=\Theta\left(\left(\frac{\log n}{\log\log n}\right)^\frac{1}{d+1}\right)$. Hence the results for bounded-degree graphs in~\cite{BSSS13} are special cases of Theorems~\ref{thm:maxload} and~\ref{thm:maxload2}. Furthermore, the proof of Theorems~\ref{thm:maxload} and~\ref{thm:maxload2} uses different techniques (it follows by a subtle coupling with the $1$-choice process) and is substantially shorter than the proofs in~\cite{BSSS13}.

Our second result establishes an upper bound for the maximum load when $m\geq n$. We point out that all other results known so far were limited to the case $m=n$. We establish that, when $m=\Omega(R_2 n)$, then the maximum load is of order $\Theta(m/n)$ (i.e., the same order as the average load).
We note that the difference between the maximum load and the average load for the local search allocation is always bounded above by the diameter of the graph (see Lemma~\ref{lem:balanced} below). This is in some sense similar to the $d$-choice process, where the difference between the maximum load and the average load does not depend on $m$~\cite{BCSV06}.
\begin{thm}[Maximum load when $m\geq n$]\label{thm:highdensity}
   Let $G$ be any graph with bounded degrees. Then for any $m\geq n$, with probability at least $1-n^{-1}$, the maximum load after $m$ balls have been allocated is $\Oh(\frac{m}{n}+R_2)$.
\end{thm}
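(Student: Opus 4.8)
The plan is to bound the maximum load by splitting it into two regimes and handling them separately: the ``overload above the average'' and the contribution coming from the neighborhood-growth parameter $R_2$.

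First I would recall the structural fact (Lemma~\ref{lem:balanced}) that at any point in the process the difference between the maximum load and the minimum load is at most the diameter of $G$; more usefully, I would want a \emph{localized} version stating that the load of any vertex $v$ never exceeds the load of any vertex $u$ by more than $\dist(u,v)$. This is immediate from the definition of local search: whenever a ball is placed at a local minimum $w$, every neighbor of $w$ has load at least that of $w$, so loads along any path increase by at most one per edge. This already tells us that controlling the \emph{minimum} load over a suitable ball around $v$ controls the load at $v$ itself, up to an additive $O(R_2)$ term — precisely the shape of the claimed bound.

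The main work is then to show that, after $m$ balls, with probability at least $1-n^{-1}$, every vertex $v$ has some vertex $u \in B_v^{R_2}$ whose load is $O(m/n)$. For this I would compare with the $1$-choice process, exactly as in the proof of \thmref{maxload}: the key observation, already exploited for $m = n$, is that a ball born at $u$ is placed somewhere in the connected region reachable by a descending local search, and in particular if we track only whether a \emph{ball born inside} $B_v^{R_2}$ ever lands outside, the total load inside $B_v^{R_2}$ in the local-search process is stochastically dominated in a controllable way by the number of balls whose birthplace lies in $B_v^{R_2}$. The expected number of such balls is $m\,|B_v^{R_2}|/n$, and since $R_2|B_v^{R_2}| \geq \log n$ we have $|B_v^{R_2}| \geq (\log n)/R_2$; a Chernoff bound then shows this count is within a constant factor of its mean with probability $1 - n^{-\Omega(1)}$, and a union bound over the $n$ vertices costs only a factor $n$. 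Dividing by $|B_v^{R_2}|$, some vertex in $B_v^{R_2}$ has load $O(m/n + 1)$, and combining with the Lipschitz property of the load function gives load $O(m/n + R_2)$ at $v$. A union bound over all $v$ finishes the argument, adjusting constants so the total failure probability is at most $n^{-1}$.

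The step I expect to be the main obstacle is making the domination in the previous paragraph rigorous: balls born \emph{outside} $B_v^{R_2}$ can in principle be routed \emph{into} it by the local search, so the load inside is not simply bounded by the number of balls born inside. The fix is to argue about the boundary: a ball entering $B_v^{R_2}$ from outside must pass through a vertex on the boundary sphere, and at that moment that boundary vertex is a strict descent step, so its load is already comparable to the interior loads; one then sets up the coupling so that each such incoming ball is charged against a deficit at the boundary, and uses the Lipschitz estimate again to convert boundary loads into the $O(R_2)$ slack. Handling this charging carefully — and choosing the radius in the definition of $R_2$ large enough that the boundary contribution is absorbed into the additive $R_2$ term — is where the real care is needed; everything else is a Chernoff-plus-union-bound routine.
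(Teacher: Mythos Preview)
Your overall architecture matches the paper's: use smoothness (\lemref{balanced}) to reduce ``load at $v$ is large'' to ``total load in $B_v^{R_2}$ is large,'' bound that total, and union-bound over $v$. The paper does exactly this, invoking Proposition~\ref{pro:upperbound} with $S=B_u^{R_2}$ and $\ell=c(m/n+R_2)$; the bound $R_2|B_u^{R_2}|\geq \log n$ then makes the tail $\leq n^{-2}$.

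The gap is precisely where you flag it, and your proposed fix does not close it. Your claim that $\sum_{w\in B_v^{R_2}} X_w^{(m)}$ is stochastically dominated ``in a controllable way'' by the number of balls \emph{born} in $B_v^{R_2}$ is false in general: on a cycle, an interval in which no balls are born can still receive positive load from balls born in an adjacent crowded interval, so the total load in a fixed set $S$ is \emph{not} dominated by $\sum_{w\in S}\bar X_w^{(m)}$. Plain majorization (\lemref{1choice}) only controls sums of the $|S|$ \emph{largest} loads, not sums over a prescribed set. Your boundary-charging sketch (``each incoming ball is charged against a deficit at the boundary'') does not yield a quantitative bound: balls can repeatedly cross a boundary edge $(b,b')$ inward whenever $X_b<X_{b'}$, and nothing in your argument limits how many times this happens over the whole process, nor does it convert those crossings into an additive $O(R_2|B_v^{R_2}|)$ term.

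What the paper does instead is the weighted majorization of \lemref{majorization} with $\mu(w)=-d_G(w,S)$. Then $\sum_{w\in S}X_w^{(m)}=\sum_{w\in S}W_w^{(m)}\leq K_S^{(m)}\leq \bar K_S^{(m)}$, where $K_S^{(m)}$ is the sum of the $|S|$ largest weights; this is the inequality that correctly absorbs balls migrating into $S$, because vertices far from $S$ are penalized by $-d_G(\cdot,S)$ and so cannot contribute to the top-$|S|$ sum unless their $1$-choice load is correspondingly large. The resulting tail bound is packaged as Proposition~\ref{pro:upperbound}, and the proof of \thmref{highdensity} is then three lines. That weighting trick is the missing idea in your proposal.
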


\subsubsection*{Cover time}
We propose to study the following natural quantity related to any process based on allocating balls into bins.
Define the \emph{cover time} as the first time at which all bins have at least one ball allocated to them.
This is in analogy with cover time of random walks on graphs, which is the first time at which the random walk has visited all vertices of the graph.
Note that for the $1$-choice process, the cover time corresponds to the time of a \emph{coupon collector} problem,
which is known to be $n \log n +\Theta(n)$~\cite[Section~2.4.1]{MU05}.
For the $d$-choice process with $d=\Theta(1)$, we obtain that the cover time is also of order $n\log n$.
We show that for the local search allocation the cover time can be much smaller than $n \log n$.

Our next theorem establishes that the cover time for vertex-transitive bounded-degree graphs is $\Theta(R_2 n)$ with high probability.
\begin{thm}[Cover time for bounded-degree graphs]\label{thm:covertime}
   Let $G$ be any vertex-transitive graph with bounded degrees. Then, with probability at least $1-n^{-1}$, the cover time of local search allocation on $G$ is $\Theta(R_2n)$.
\end{thm}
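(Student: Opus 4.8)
## Proof proposal

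The plan is to prove the upper and lower bounds separately; in both directions the idea is that, for the purpose of covering, one step of local search behaves roughly like depositing a ball somewhere inside a radius-$\Theta(R_2)$ ball around its (uniform) birthplace, so that the cover time is governed by a coupon-collector estimate on such ``enlarged bins''. The basic tool is a deterministic fact: \emph{if some vertex $v$ currently has load $0$, then every vertex $w$ has load at most $\dist(v,w)$}. One proves this by induction on the number of allocated balls, using that a ball comes to rest only at a local minimum — if the inequality were tight at some $w$, a neighbour of $w$ lying one step closer to $v$ would exhibit a strictly smaller load than $w$, contradicting minimality. (This is the mechanism behind Lemma~\ref{lem:balanced}.) In particular, around any empty vertex the load profile is ``flat'', which constrains the greedy descent of every subsequently born ball.

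For the upper bound, fix $m = CR_2 n$ for a large constant $C$ and condition on the event of Theorem~\ref{thm:highdensity} that the maximum load stays $O(R_2)$ throughout the first $m$ balls, which holds with probability $1-n^{-1}$; then each ball descends for at most $r:=O(R_2)$ steps and hence stops within distance $r$ of its birthplace. Let $N_t$ be the number of empty vertices after $t$ balls and $b_r:=|B_u^{r}|$, which is the same for all $u$ by vertex-transitivity. The crux is the estimate that, for every configuration reachable by the process, the next ball fills a previously empty vertex with probability $\Omega\!\big(\min\{1,\,N_t b_r/n\}\big)$: the ball does so whenever it is born at an empty vertex (probability $N_t/n$), and, more importantly, whenever it is born within distance $r$ of an empty vertex the flatness of the loads near that vertex forces the descent to terminate at \emph{some} empty vertex with constant probability, and a union bound over the (essentially disjoint once $N_t$ is small) radius-$r$ balls around the empty vertices yields the stated rate. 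A standard drift/concentration argument then shows $N_t$ reaches $0$ within $O(n) + O\!\big((n/b_r)\log(n/b_r)\big)$ balls with probability $1-n^{-1}$; since $r \ge R_2$ and $R_2 b_{R_2} \ge \log n$ we have $b_r \ge \log n / R_2$, so this bound is $O(R_2 n)$.

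For the lower bound, take $m = cR_2 n$ with $c$ a small constant and again condition on the maximum load being $O(R_2)$, so that a ball can affect the status of a vertex $v$ only if it is born within distance $r':=O(R_2)$ of $v$; hence any vertex in whose radius-$r'$ ball no ball is ever born remains empty after $m$ balls. Choosing a maximal set $S$ of vertices that are pairwise at distance $>2r'$ (so the balls $\{B_v^{r'}\}_{v\in S}$ are pairwise disjoint and the corresponding emptiness events are negatively correlated), a second-moment computation exhibits a surviving vertex with probability $1-n^{-1}$ as soon as $b_{2r'}\exp\!\big(O(cR_2\, b_{r'})\big) = o(n)$; because the neighbourhood growth is polynomial, this holds for graphs such as grids and paths. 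When $b_{r'}$ grows too fast this is insufficient, and one instead uses flatness to show that while a constant fraction of vertices are empty a ball fills a new vertex essentially only when it is born at (or adjacent to) an empty one, so that over the first $\Theta(R_2 n)$ balls the process is effectively a coupon collector; consequently a fixed vertex is still uncovered with probability $e^{-O(cR_2)}$, which for bounded-degree graphs is at least $n^{-1/2}$ (since $R_2 = O(\sqrt{\log n})$), and a second-moment argument over all $n$ vertices again produces a survivor.

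The main obstacle is understanding the \emph{greedy} descent precisely enough: for the upper bound one needs that the descent reaches an empty vertex with constant probability whenever one lies within distance $\Theta(R_2)$, and a priori a greedy descent can be diverted into a non-empty local minimum even with empty vertices nearby; the flatness fact — loads bounded by the distance to any empty vertex — is exactly what rules this out with the required probability, and turning it into a bound that is uniform over all reachable configurations is the technical heart (the fast-growth case of the lower bound needs the dual statement, that such descents rarely reach empty vertices far from the birthplace). Granting these, the matching $\Theta(R_2 n)$ for vertex-transitive $G$ follows because $b_r$ does not depend on the vertex, so the upper and lower estimates obtained above differ only by constant factors.
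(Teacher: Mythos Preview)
Your approach is genuinely different from the paper's and, as written, has a real gap in both directions that I do not see how to close.

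For the upper bound, the crux is your claim that whenever a ball is born within distance $r=\Theta(R_2)$ of an empty vertex, the greedy descent ends at \emph{some} empty vertex with probability bounded away from zero. This is not established, and in fact appears to be false in the form you need. Take the regime $N_t=1$, with the unique empty vertex $v$. Smoothness only gives $X_w\le d_G(v,w)$; it does \emph{not} prevent positive-load local minima near $v$. Already at distance $2$ one can have $X_u=1$ with every neighbour of $u$ (none of which is $v$) also of load $\geq 1$, so the descent halts at $u$ and never reaches $v$. The only vertices from which the descent is guaranteed to hit $v$ are $v$ and its $\Delta$ neighbours, giving a per-step success probability of order $\Delta/n$, not $|B_v^{r}|/n$. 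On an expander this is smaller by a factor $\Theta(\log n/(R_2\Delta))$, and your coupon-collector sum then blows up to $\Theta(n\log n)$ rather than $\Theta(R_2 n)$. You flag this as ``the technical heart'' but never supply the argument; I do not believe one exists at this level of generality without a new idea.

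For the lower bound, your first case already breaks on expanders for a reason you half-anticipate but do not repair: you condition on the maximum load being at most $r'=C'R_2$ with $C'$ coming from Theorem~\ref{thm:highdensity}, so $r'$ can exceed $R_2$, and then $|B_v^{r'}|$ is polylogarithmic while $R_2=\Theta(\log\log n)$, making $\exp(-cR_2|B_v^{r'}|)=n^{-\omega(1)}$ for every $c>0$. Your fallback (``effectively a coupon collector'') asserts that a ball fills a new vertex essentially only when born at or adjacent to an empty one, but the same local-minimum phenomenon cuts the other way: a ball born several steps from an empty vertex \emph{can} fill it, and you give no mechanism bounding the per-ball probability that a fixed $v$ is hit by $O(1/n)$ rather than $O(|B_v^{r'}|/n)$. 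Finally, even in the slow-growth case, a bare second-moment bound yields $\Pr[\text{all covered}]\le 1/\mathbf E[\#\text{ empty}]$, which cannot reach $n^{-1}$ since the expectation is at most $n$; you would need a Lipschitz/McDiarmid step on top.

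The paper sidesteps all of this by never analysing the descent path. It introduces a weight $\mu(w)=\pm d_G(u,w)$ and proves (Lemma~\ref{lem:majorization}) that the weighted load vector of local search is majorized by that of the $1$-choice process; this converts both ``$X_u^{(m)}\ge 1$'' and ``$X_u^{(m)}=0$'' into events about the $1$-choice loads in $B_u^{R_2}$ (Lemma~\ref{lem:pyramid} and Proposition~\ref{pro:blanket}), where standard Poisson/martingale concentration applies directly. In particular, the lower bound uses radius exactly $R_2$ rather than the max-load radius $C'R_2$, which is why it survives the expander case.
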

The theorem above is a special case of Theorem~\ref{thm:covertime2},
which we state and prove in Section~\ref{sec:covertime}; in particular, we establish there that the upper bound of $\Oh(R_2n)$ holds for all bounded-degree graphs.
 Since $R_2=\Oh(\sqrt{\log n})$ for all connected graphs, it follows that the cover time for any connected, bounded-degree graph is at most $\Oh(n \sqrt{\log n})$, which is significantly smaller than the cover time of the $d$-choice process for any $d=\Theta(1)$. In particular, we obtain $R_2=\Theta(\log\log n)$ for bounded-degree expanders, and $R_2=\Theta\Big((\log n)^\frac{1}{d+1}\Big)$ for $d$-dimensional grids.

Our final result provides a general upper bound on the cover time for dense graphs.
Theorem~\ref{thm:dense} below is a special case of Theorem~\ref{thm:densefull}, which gives an upper bound on the cover time for all regular graphs.
We state and prove Theorem~\ref{thm:densefull} in Section~\ref{sec:covertime}.
\begin{thm}[Cover time for dense graphs]\label{thm:dense}
   Let $G$ be any $d$-regular graph with $d=\Omega(\log n \log\log n)$.
   Then, with probability at least $1-n^{-1}$, the cover time is $\Theta(n)$.
\end{thm}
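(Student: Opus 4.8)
The plan is as follows. The lower bound $\Omega(n)$ is immediate: every ball raises the number of occupied bins by at most one, so fewer than $n$ balls can never cover all $n$ bins, and hence the cover time is always at least $n$.

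For the upper bound I fix a large absolute constant $C$, put $m=Cn$, and aim to prove that for each fixed $v\in V$, $\Pro{v\text{ is empty after }m\text{ balls}}\le n^{-2}$; a union bound over $v$ then yields the theorem. Two properties drive the argument. The first is \emph{local balancing}: since a ball stops only at a local minimum and loads are non-decreasing, one has $\mathrm{load}(u)\le\mathrm{load}(w)+\dist(u,w)$ for all $u,w$ at all times --- this is the mechanism behind $\lemref{balanced}$. In particular, while $v$ is empty every vertex $u$ satisfies $\mathrm{load}(u)\le\dist(u,v)$, so $B_v^1$ carries only loads in $\{0,1\}$ and thus has total load at most $d+1$. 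The second property is \emph{absorption}: conditioned on $v$ being empty, a ball born at a neighbour $w$ of $v$ sees the load-$0$ vertex $v$ among $w$'s neighbours, so either $w$ was itself empty and the ball occupies $w$, or the ball moves in its first step onto a load-$0$ vertex of $B_w^1\subseteq B_v^2$ and stops there; either way the ball occupies a previously empty vertex of $B_v^2\setminus\{v\}$ (it cannot occupy $v$ by the conditioning), and distinct balls occupy distinct vertices. Moreover, and unconditionally, the number of balls born in $B_v^1$ is $\mathrm{Binomial}(m,(d+1)/n)$ with mean at least $Cd$, so by a Chernoff bound it is at least $Cd/2$ except with probability at most $n^{-2}$, using $d=\Omega(\log n)$ and $C$ large.

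Putting these together: conditioned on $v$ being empty and on the likely event just mentioned, all the at least $Cd/2$ balls born in $B_v^1$ are born in $N(v)$ and occupy distinct vertices of $B_v^2\setminus\{v\}$, which forces $|B_v^2|\ge Cd/2$. For ``clique-like'' graphs, where $|B_v^2|=\Oh(d)$, taking $C$ large already gives the contradiction, so $v$ is occupied with high probability. For the remaining ``expander-like'' graphs I would iterate the same bookkeeping over the shells of $v$: the at least $Cd/2-d$ balls born in $B_v^1$ that do not occupy a distance-$1$ vertex occupy that many distinct distance-$2$ vertices, and repeating the accounting on $B_v^2,B_v^3,\dots$ produces a chain of inequalities between $|B_v^r|$ and the number of balls falling into $B_v^r$ that, for $C$ large enough, cannot hold past some radius $r=\Oh(\polylog n)$ without $v$ having been occupied.

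The step I expect to be the real obstacle is bookkeeping the flow of balls across the shell boundaries for $r\ge 2$: unlike at radius $1$, a ball born inside $B_v^r$ need not occupy a fresh vertex --- it may merely add load to one already carrying load $1$ or $2$ --- and balls born outside may fall in. Controlling this again relies on local balancing (while $v$ is empty the bound $\mathrm{load}(u)\le\dist(u,v)$ prevents ``downhill'' moves away from $v$ except over short distances), but turning it into clean inequalities valid simultaneously for all radii and all $n$ choices of $v$ is the technical heart. This is precisely where the hypothesis $d=\Omega(\log n\log\log n)$ enters: it makes every relevant ball $B_v^r$ large enough for the Chernoff estimates to beat the union bound over $v$, the extra $\log\log n$ providing the slack lost both to the dependence between consecutive balls and to the crude bounds on how far a ball can travel. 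Finally, $\thmref{dense}$ is merely the instance ``(the appropriate generalisation of) $R_2$ is $\Oh(1)$'' of the general bound $\thmref{densefull}$, whose proof carries out this programme for arbitrary regular graphs.
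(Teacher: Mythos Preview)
Your lower bound is fine and matches the trivial observation.

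For the upper bound your approach is genuinely different from the paper's, and your closing sentence is mistaken: the proof of \thmref{densefull} does \emph{not} carry out your shell-by-shell programme. Instead it couples the local search allocation with the \emph{coupon collector process} of~\cite{AHKV03}: in each round a uniformly random vertex is chosen; if it is uncovered it becomes covered, and if it is already covered then a uniformly random uncovered neighbour (if any) becomes covered. The coupling uses the same birthplace in both processes and the same random ranking of neighbours for tie-breaking; a short induction on the round number shows that every vertex covered by the coupon collector process is also covered by local search. The cover-time bound $\Oh\big(n(1+\tfrac{\log n\,\log d}{d})\big)$ for the coupon collector process is then quoted directly from~\cite{AHKV03}, and plugging in $d=\Omega(\log n\log\log n)$ gives $\Oh(n)$. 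The whole argument is a couple of paragraphs plus a citation.

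Your radius-$1$ absorption argument is correct and would indeed settle graphs with $|B_v^2|=\Oh(d)$, but the iteration for $r\geq 2$ is, as you yourself say, only a plan: once past radius~$1$ the bijection between balls born near $v$ and freshly occupied vertices breaks down, and you give no mechanism to recover it. The paper sidesteps all of this by reducing to a process whose analysis is already in the literature.
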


\section{Background and notation}\label{sec:prelim}
In this section we recall some basic properties of the local search allocation that will be useful in our proofs.

Let $G=(V,E)$ be an undirected, not necessarily connected, graph with $n$ vertices, and let $\Delta$ be the maximum degree of $G$. 
We assume that, in the local search allocation, ties are broken independently and uniformly at random.

We denote by $d_{G}(u,v)$ the distance between $u$ and $v$ in $G$ and define
$d_{G}(v,S)= \min_{v' \in S} d_{G}(v,v')$ for any non-empty subset $S \subseteq V$. Further, we define $B_{u}^{r}=\{v \in V \colon d_{G}(v,u) \leq r\}$ and for any non-empty set $S \subseteq V$, $B_{S}^{r}=\{v \in V \colon d_{G}(v,S) \leq r\}.$

For each $m\geq 0$ and vertex $v\in V$, let $X_v^{(m)}$ denote the load of $v$ (i.e., the number of balls allocated to $v$) after $m$ balls have been allocated.
Initially we have $X_v^{(0)}=0$ for all $v\in V$ and, for any $m\geq0$, we have $\sum_{v\in V}X_v^{(m)}=m$.
Denote by $X_{\max}^{(m)}$ the maximum load after $m$ balls have been allocated; i.e.,
$$
   X_{\max}^{(m)} = \max_{v\in V} X_v^{(m)}.
$$
Also, denote by $T_\cov=T_\cov(G)$ the \emph{cover time} of $G$, which we define as the first time at which all vertices have load at least $1$.
More formally,
$$
   T_\cov = \min\{m\geq 0 \colon X_v^{(m)} \geq 1 \text{ for all $v\in V$}\}.
$$

Let $U_i\in V$ denote the birthplace of ball $i$, and for each $m\geq 0$ and $v\in V$, let $\bar X_v^{(m)}$ denote the load of $v$ after $m$ balls have been allocated according to the $1$-choice process. Let $\bar X_{\max}^{(m)}$ denote the maximum load for the 1-choice process. More formally,
\begin{equation}
   \bar X_v^{(m)}=\sum_{i=1}^m \ind{U_i=v}
   \quad\text{and}\quad
   \overline{X}_{\max}^{(m)}=\max_{v\in V}\overline{X}_v^{(m)}
   \label{eq:1choice}
\end{equation}

For  vectors $A=(a_1,a_2,\ldots,a_n)$ and $A'=(a_1',a_2',\ldots,a_n')$ such that $\sum_{i=1}^n a_i = \sum_{i=1}^n a_i'$,
we say that $A$ \emph{majorizes} $A'$ if, for each $\kappa=1,2,\ldots,n$, the sum of the $\kappa$ largest entries
of $A$ is at least the sum of the $\kappa$ largest  entries of $A'$. More formally,
if $j_1,j_2,\ldots,j_n$ are distinct numbers such that
$a_{j_1}\geq a_{j_2} \geq \cdots \geq a_{j_n}$ and
$j_1',j_2',\ldots,j_n'$ are distinct numbers  such that
$a'_{j_1'}\geq a'_{j_2'} \geq \cdots \geq a'_{j_n'}$, then $A$ majorizes $A'$ if
\begin{equation}
   \sum_{i=1}^\kappa a_{j_i} \geq \sum_{i=1}^\kappa a'_{j_i'}
   \quad\text{for all $\kappa=1,2,\ldots,n$.}
   \label{eq:majorization}
\end{equation}

The lemma below establishes that the load vector obtained by the $1$-choice process majorizes the load vector obtained by the local search allocation.
As a consequence, we have that $X_{\max}^{(n)} = \Oh\left(\frac{\log n}{\log\log n}\right)$ and $T_\cov = \Oh\left(n\log n\right)$ for all $G$.
Later, in Section~\ref{sec:key}, we state and prove Lemma~\ref{lem:majorization}, which is a generalization of Lemma~\ref{lem:1choice}.
\begin{lem}[{Comparison with $1$-choice}]\label{lem:1choice}
   For any fixed $k\geq 0$,
   we can couple $X^{(k)}$ and $\bar X^{(k)}$ so that, with probability 1, $\bar X^{(k)}$ majorizes $X^{(k)}$.
   Consequently, we have that, for all $k\geq 0$, $\bar X_{\max}^{(k)}$ stochastically dominates $X_{\max}^{(k)}$.
\end{lem}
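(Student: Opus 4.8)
The plan is to construct a single coupling of the two processes under which, almost surely, $\bar X^{(j)}$ majorizes $X^{(j)}$ for \emph{every} $j\ge 0$ simultaneously; specializing to $j=k$ gives the first claim. The second claim is then immediate: the largest coordinate of a vector equals its first largest-partial-sum, so under this coupling $\bar X_{\max}^{(k)}\ge X_{\max}^{(k)}$ pointwise, which is exactly stochastic domination of the maxima. (The statement is also the ``no initial balls'' instance of the more general \lemref{majorization} of Section~\ref{sec:key}, so one could simply quote that; what follows is the direct version.) I would build the coupling ball by ball and argue by induction on $j$, the case $j=0$ being trivial.

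For the inductive step assume $A:=\bar X^{(j-1)}$ majorizes $A':=X^{(j-1)}$ (both have total $j-1$). Ball $j$ is placed, in the $1$-choice process, at a uniformly random vertex $u$, and in the local search process it is born at a uniformly random vertex $b$, descends strictly in load, and comes to rest at a local minimum $v$ with $X_v^{(j-1)}\le X_b^{(j-1)}$. The one elementary fact I need is that incrementing a coordinate $p$ of a vector $A$ raises its $\kappa$-th largest-partial-sum $S_\kappa(A)$ by exactly $\ind{\kappa>\#\{i:A_i>A_p\}}$. Hence, if the coupling can be arranged so that
\[
  \#\{i:\bar X_i^{(j-1)}>\bar X_u^{(j-1)}\}\ \le\ \#\{i:X_i^{(j-1)}>X_v^{(j-1)}\},
\]
then for every $\kappa$ the increment of $S_\kappa$ on the $1$-choice side dominates that on the local-search side, and since $A$ majorizes $A'$, majorization is preserved. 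Using $X_v^{(j-1)}\le X_b^{(j-1)}$, the right-hand side above is at least $\#\{i:X_i^{(j-1)}>X_b^{(j-1)}\}$, so it suffices to couple the two (marginally) uniform vertices $u$ and $b$ so that the number of coordinates strictly exceeding $u$ in $\bar X^{(j-1)}$ is at most the number exceeding $b$ in $X^{(j-1)}$.

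The hard part is exactly the construction of this coupling, because the two configurations can have very different patterns of ties: for instance $\bar X^{(j-1)}$ may have many bins tied at load $0$ while $X^{(j-1)}$ has few, which makes a uniformly random bin of $\bar X^{(j-1)}$ tend to have \emph{more} bins strictly above it, not fewer, so a naive ``same rank'' coupling is not available. I would handle this by splitting according to the partial sums of the inductive hypothesis. At indices $\kappa$ with $S_\kappa(\bar X^{(j-1)})>S_\kappa(X^{(j-1)})$ there is slack of at least one, so majorization at that index is safe no matter where the two balls land. At indices $\kappa$ with $S_\kappa(\bar X^{(j-1)})=S_\kappa(X^{(j-1)})$, equality of these partial sums, together with the descent property $X_v^{(j-1)}\le X_b^{(j-1)}$, pins down the relevant tie-groups of the two configurations tightly enough that on the corresponding event one can choose $u$ with few enough bins above it. Patching these into one coupling of $(u,b,\text{tie-breaks})$ that keeps $u$ and $b$ marginally uniform closes the induction. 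This patching is the genuine obstacle; the partial-sum bookkeeping and the deduction of stochastic domination of the maxima are routine, and the cleanest packaging is to isolate the inductive step as the self-standing claim that a uniform increment of $A$ can be coupled to the local-search increment of $A'$ so as to preserve majorization — which is precisely what \lemref{majorization} establishes in full generality.
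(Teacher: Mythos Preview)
Your reduction to \lemref{majorization} with $\mu\equiv 0$ is exactly the paper's route, so the opening remark of your proposal already constitutes a complete proof. The attempted ``direct version'', however, has a real gap. The pointwise inequality $\#\{i:\bar X_i^{(j-1)}>\bar X_u^{(j-1)}\}\le\#\{i:X_i^{(j-1)}>X_v^{(j-1)}\}$ is too strong to be coupled in general: with $\bar X^{(j-1)}=(3,0,0,0)$ and $X^{(j-1)}=(1,1,1,0)$ on the path $1\text{--}2\text{--}3\text{--}4$, the left side equals $1$ with probability $3/4$ while the right side equals $0$ with probability $1/2$, so no coupling of uniform $u$ and $b$ enforces it. You then retreat to requiring the inequality only at equality indices, but the ``patching'' of the coupling across those indices is never carried out --- you yourself flag it as the genuine obstacle and defer back to \lemref{majorization}, which is circular.

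The point you are missing is that the same-rank coupling you dismissed \emph{does} work, just not through your rank inequality. In the paper's proof of \lemref{majorization} one picks $\ell$ uniformly, sets $u=j_\ell$ (the vertex of rank $\ell$ in $\bar X^{(j-1)}$) and $b=i_\ell$ (rank $\ell$ in $X^{(j-1)}$), and introduces the intermediate vector $\tilde X^{(j)}:=X^{(j-1)}+e_{i_\ell}$. Then (i) $\bar X^{(j)}$ majorizes $\tilde X^{(j)}$ directly by \lemref{abku} --- incrementing the same rank in two sorted, majorization-ordered vectors preserves majorization, and this single lemma absorbs all the tie subtleties you were worried about --- and (ii) $\tilde X^{(j)}$ majorizes $X^{(j)}$, since $X^{(j)}=\tilde X^{(j)}-e_{i_\ell}+e_{i_\iota}$ with $X_{i_\iota}^{(j-1)}\le X_{i_\ell}^{(j-1)}$ is a single transfer from a weakly richer to a poorer coordinate. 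No bespoke coupling or slack/equality bookkeeping is needed.
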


For any $v\in V$, let $N_v$ be the set of neighbors of $v$ in $G$.
The next lemma establishes that the local search allocation always maintains a \emph{smoothed} load vector in the sense that the load of
any two adjacent vertices differs by at most $1$.
\begin{lem}[{Smoothness}]\label{lem:balanced}
   For any $k\geq 0$, any $v\in V$ and any $u\in N_v$, we have that $|X_v^{(k)}-X_{u}^{(k)}|\leq 1.$
\end{lem}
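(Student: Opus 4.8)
The plan is to prove the statement by induction on $k$, maintaining the invariant that $|X_v^{(k)} - X_u^{(k)}| \le 1$ for every edge $\{u,v\}$ of $G$. The base case $k=0$ is immediate, since $X_v^{(0)} = 0$ for all $v \in V$.

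For the inductive step, suppose the invariant holds after $k$ balls have been allocated, and consider the allocation of ball $k+1$. By the definition of the local search allocation, this ball ends its walk at some vertex $w$ that is a local minimum with respect to the current load vector $X^{(k)}$; that is, $X_w^{(k)} \le X_{u'}^{(k)}$ for every $u' \in N_w$. Allocating the ball to $w$ changes the load vector only at $w$: we have $X_w^{(k+1)} = X_w^{(k)} + 1$ and $X_x^{(k+1)} = X_x^{(k)}$ for all $x \ne w$.

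It then remains to verify the invariant for $X^{(k+1)}$. Every edge of $G$ not incident to $w$ has both endpoint loads unchanged, so the invariant survives there by the induction hypothesis. For an edge $\{w, u'\}$ with $u' \in N_w$, the induction hypothesis gives $X_{u'}^{(k)} - X_w^{(k)} \le 1$, while the local-minimum property gives $X_{u'}^{(k)} - X_w^{(k)} \ge 0$; hence $X_{u'}^{(k)} - X_w^{(k)} \in \{0, 1\}$. Consequently $X_w^{(k+1)} - X_{u'}^{(k+1)} = X_w^{(k)} + 1 - X_{u'}^{(k)} \in \{0, 1\}$, so $|X_w^{(k+1)} - X_{u'}^{(k+1)}| \le 1$, which completes the induction.

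There is no serious obstacle here; the only point requiring a little care is to state the local-minimum property with respect to the correct load vector — the one in force just before ball $k+1$ is placed — and to observe that it is precisely this property, combined with the pre-existing smoothness along every edge at $w$, that prevents the single new ball from opening a load gap of $2$ across any edge. One may also remark in passing that the local search always terminates, since each step of the walk strictly decreases the load of the current vertex and loads are non-negative integers, so the allocation of ball $k+1$ is well defined; this, however, is not needed for the statement itself.
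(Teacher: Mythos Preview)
Your induction is correct. The paper's own proof takes a slightly different route: instead of maintaining the smoothness invariant forward in time, it argues by contradiction. Assuming $X_v^{(k)} \ge X_u^{(k)} + 2$ for some edge $\{u,v\}$, it looks at the \emph{last} ball $j$ allocated to $v$ and observes that $X_v^{(j-1)} = X_v^{(k)} - 1 \ge X_u^{(k)} + 1 \ge X_u^{(j-1)} + 1$ (using only that loads are non-decreasing), so $v$ was not a local minimum when ball $j$ arrived --- a contradiction. Both arguments rest on the same fact (balls settle at local minima); the paper's version is marginally slicker in that it avoids the explicit induction and the case split over edges incident to $w$, while your forward induction has the virtue of giving, for free, the stronger statement that the invariant holds after \emph{every} step, not just at the hypothetical first violation.
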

\begin{proof}
   In order to obtain a contradiction, suppose that $X_v^{(k)} \geq X_u^{(k)} +2$, and let $j$ be the last ball allocated to
   $v$. Then, we have that
   \[
     X_v^{(j-1)} = X_v^{(k)} - 1 \geq X_u^{(k)} + 1 \geq X_u^{(j-1)} + 1.
   \]
   Therefore, the moment the $j$th ball is born, vertex $v$ has at least one neighbor with load
   strictly smaller than $v$. Therefore, ball $j$ is not allocated to $v$, establishing a contradiction.
\end{proof}

The next lemmas establish that the load vector $X^{(n)}$ satisfies a Lipschitz and monotonicity condition.
\begin{lem}[{Lipschitz property}]\label{lem:lipschitz}
   Let $k\geq 1$ be fixed and $u_1,u_2,\ldots,u_k \in V$ be arbitrary.
   Let $(X^{(k)}_v)_{v\in V}$ be the load of the vertices of $G$ after the local search allocation places $k$ balls with birthplaces $u_1,u_2,\ldots,u_k$.
   Let $i\in \{1,2,\ldots,k\}$ be fixed, and let $(Y^{(k)}_v)_{v\in V}$ be the load of the vertices of $G$ after the local search allocation places
   $k$ balls
   with birthplaces $u_1,u_2,\ldots,u_{i-1},u_i',u_{i+1},u_{i+2},\ldots,u_k$, where $u_i'\in V$ is arbitrary.
   In other words, $(Y_v^{(k)})_{v\in V}$ is obtained from $(X_v^{(k)})_{v\in V}$ by changing the
   birthplace of the $i$th ball from $u_i$ to $u_i'$.
   Then, there exists a coupling such that, with probability $1$,
   \begin{equation}
      \sum_{v\in V} \left|X_v^{(k)} - Y_v^{(k)}\right| \leq 2.
      \label{eq:lipschitz}
   \end{equation}
\end{lem}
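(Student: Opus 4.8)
The plan is to couple the two executions ball by ball and track the ``difference set'' $D_j = \{v : X_v^{(j)} \neq Y_v^{(j)}\}$ together with the discrepancy vector, showing that after ball $i$ is processed the two load vectors differ in at most two coordinates, each by exactly one, and that this invariant is preserved by every subsequent ball. First I would set up the coupling: for balls $1,\dots,i-1$ the two processes are identical (same birthplaces, same random bits for tie-breaking), so $X_v^{(j)}=Y_v^{(j)}$ for all $v$ and all $j<i$. At step $i$, ball $i$ is born at $u_i$ in the first process and at $u_i'$ in the second; in each process the ball walks downhill to a local minimum. Let $w$ be the sink in the $X$-process and $w'$ the sink in the $Y$-process. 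After this step, $X^{(i)}$ and $Y^{(i)}$ agree everywhere except that $X_w^{(i)} = Y_w^{(i)}+1$ and $Y_{w'}^{(i)} = X_{w'}^{(i)}+1$ (if $w=w'$ the vectors are equal and the claim is trivial from then on). So $\sum_v |X_v^{(i)}-Y_v^{(i)}| \le 2$, with $D_i \subseteq \{w,w'\}$.

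The heart of the argument is the inductive step: assuming that before ball $j$ (for $j>i$) we have $X_a^{(j-1)} = Y_a^{(j-1)}+1$, $Y_b^{(j-1)} = X_b^{(j-1)}+1$ for two (possibly equal, in which case we are done) vertices $a,b$ and the vectors agree elsewhere, I want to couple the local search of ball $j$ in the two processes so that the invariant ``differ in at most two coordinates, each by at most one, total discrepancy $\le 2$'' is maintained. Ball $j$ has the same birthplace $u_j$ in both processes; I couple the tie-breaking random bits as much as possible. As long as the downhill walk stays away from $a$ and $b$ and their neighborhoods, the two walks see identical local load landscapes and can be coupled to follow the same path. The interesting case is when a walk reaches a vertex adjacent to $a$ or $b$ (or reaches $a$ or $b$ itself), where the one-unit discrepancy can change the comparison of loads and hence the walk's decision. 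Here one argues that the two walks can diverge, but when they do, the sinks $w$ (in $X$) and $w'$ (in $Y$) satisfy: the set of coordinates where $X^{(j)}$ and $Y^{(j)}$ disagree is contained in $\{a,b,w,w'\}$ but in fact always collapses to a set of size at most two with total discrepancy $2$ — because adding a ball to one of the two ``low'' vertices (say $a$ in the $Y$-process, so that $Y_a$ catches up to $X_a$) removes $a$ from the difference set, while possibly creating a new discrepancy at the new sink. A careful case analysis (did the walk sink at $a$? at $b$? at a neighbor of one of them? did it pass through and get deflected?) shows the discrepancy is always rebalanced so that $\sum_v|X_v^{(j)}-Y_v^{(j)}| \le 2$; the key structural fact used throughout is the Smoothness Lemma (Lemma~\ref{lem:balanced}), which guarantees that loads of adjacent vertices differ by at most one, so a single extra ball cannot reverse a load inequality by more than the minimal amount and the perturbation propagates in a controlled way.

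The main obstacle I anticipate is the bookkeeping in the inductive step when a downhill walk interacts with \emph{both} special vertices $a$ and $b$, or when the walk in one process sinks at one of them while the walk in the other process is deflected elsewhere: one must check that no third discrepant coordinate is created and that the total $\ell_1$ distance does not grow to $3$ or $4$. This is where the Smoothness Lemma does the real work, since it forbids the configurations that would otherwise let the discrepancy amplify. I would organize the case analysis around the \emph{first} vertex along the coupled walk at which the two processes could behave differently, and show that from that point on the two walks can be re-coupled so that their sinks differ by at most one vertex, which combined with the pre-existing two-vertex discrepancy and the fact that one of those two vertices gets ``filled in'' by the new ball keeps the invariant intact. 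Iterating from $j=i$ up to $j=k$ then yields \eqref{eq:lipschitz}.
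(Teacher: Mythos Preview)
Your coupling and inductive invariant are exactly what the paper uses, but you are missing the single observation that collapses the ``careful case analysis'' you anticipate into a two-line argument. The point is that the discrepant vertex $a$ (where $X_a^{(j-1)}=Y_a^{(j-1)}+1$) is necessarily a \emph{local minimum in the $Y$-process}: by smoothness of $X$ (Lemma~\ref{lem:balanced}) we have $X_a^{(j-1)}\le X_u^{(j-1)}+1$ for every neighbour $u$ of $a$, hence $Y_a^{(j-1)}=X_a^{(j-1)}-1\le X_u^{(j-1)}\le Y_u^{(j-1)}$. Consequently, if the $Y$-walk ever enters $a$ it halts there immediately --- there is no ``passing through and getting deflected''. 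Symmetrically $b$ is a local minimum in the $X$-process.

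With this in hand the inductive step is immediate. The two walks for ball $j$ share tie-breaking randomness and therefore follow the same path until the first moment one of them steps into $\{a,b\}$; one checks easily that at any vertex whose neighbourhood avoids $\{a,b\}$ the decisions coincide, and at a vertex adjacent to $a$ the only way to diverge is for the $Y$-walk to move to $a$ (and symmetrically for $b$). If no such step occurs the sinks coincide and the invariant is unchanged. Otherwise, say the $Y$-walk enters $a$: then $w'=a$, the discrepancy at $a$ is erased, the $X$-walk proceeds to some sink $w$, and the new discrepant pair is $\{w,b\}$ (or empty if $w=b$). No third coordinate ever appears, and the interaction-with-both-vertices scenario you worry about simply heals both discrepancies at once. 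The paper phrases the local-minimum property via ``$a$ was the sink of an earlier ball'' rather than via smoothness, but your instinct that Lemma~\ref{lem:balanced} is the engine is correct; you just need to extract this one consequence explicitly.
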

\begin{proof}
   We refer to the process defining the variables $X^{(k)}$ as the $X$ process, and the
   process defining the variables $Y^{(k)}$ as the $Y$ process.
   For each $v\in V$ and $i\geq 1$, we define $\xi_v^{(i)}$ to be an independent and uniformly random permutation of
   the neighbors of $v$. We use this permutation for both the $X$ and $Y$ processes to break ties when ball $i$ is at vertex $v$.
   Then, since the first $i-1$ balls have the same birthplaces in both processes, we have that
   \begin{equation}
      X_v^{(i-1)} = Y_v^{(i-1)} \quad\text{ for all $v\in V$}.
      \label{eq:prefixunchanged}
   \end{equation}
   Now, when adding the $i$th ball, we let $v_i$ be the vertex to which this ball is allocated in the $X$ process and $v_i'$ be the vertex to which this
   ball is allocated in the $Y$ process.
   If $v_i=v_i'$, then $X_u^{(i)}=Y_u^{(i)}$ for all $u\in V$ and~\eqref{eq:lipschitz} holds.
   More generally, we have that
   \begin{equation} \label{eq:desiredproperty}
   \left\{ \begin{aligned}
            X_{v_i}^{(i)} &=Y_{v_i}^{(i)}+\ind{v_i \neq v_{i}'} \\
            Y_{v'_i}^{(i)} &=X_{v'_i}^{(i)}+\ind{v_i \neq v_{i}'}\\
            X_{u}^{(i)}&=Y_{u}^{(i)} \text{ for $u \in V\setminus \{v_i,v_i'\}$.}
                             \end{aligned} \right.
                             \end{equation}
   If $i=k$, then this implies~\eqref{eq:lipschitz} and the lemma holds.

   For the case $i<k$, we add ball $i+1$ and are going to define $v_{i+1}$ and $v_{i+1}'$
   so that~\eqref{eq:desiredproperty} holds with $i$ replaced by $i+1$. Then the proof of the lemma is completed by induction.
   We assume that $v_i\neq v_{i}'$, otherwise~\eqref{eq:lipschitz} clearly holds.
   We note that $v_{i+1}$ and $v_{i+1}'$ will not \emph{necessarily} be in the same way as $v_i$ and $v_i'$. The role of $v_{i+1}$ and $v_{i+1}'$ is to be the only vertices whose loads in the $X$ and $Y$ processes are different.
   The definition of $v_{i+1}$ and $v_{i+1}'$ will vary depending on the situation.
   For this, let ball $i+1$ be born at $u_{i+1}$ and define $w$
   to be the vertex on which ball $i+1$ is allocated in the $X$ process and
   $w'$ to be the vertex on which ball $i+1$ is allocated in the $Y$ process.
   We can assume that $w\neq w'$, otherwise~\eqref{eq:desiredproperty} holds with $i$ replaced by $i+1$ by setting $v_{i+1}=v_i$ and $v_{i+1}'=v_i'$.

   Now we analyze ball $i+1$. It is crucial to note that, during the local search of ball $i+1$, if it does not enter $v_i$ in the $Y$ process
   and does not enter $v_i'$ in the $X$ process, then ball $i+1$ follows the same path in both processes. Since we are in the case $w\neq w'$,
   we can assume without loss of generality that ball $i+1$ eventually visits $v_i$ in the $Y$ process. In this case, since the local search performed by ball $i$
   in the $X$ process stops at vertex $v_i$, we have that $v_i$ is a local minimum for ball $i+1$ in the $Y$ process, which implies
   that $w'=v_i$.
   (The case when ball $i+1$ visits $v_i'$ in the $X$ process follows by a symmetric argument.)
   So, since $w\neq w'$, we have $X_{v_i}^{(i+1)}=Y_{v_i}^{(i+1)}$.
   Then we let $v_{i+1}=w$. If $w=v_i'$, we set $v_{i+1}'=w$ and~\eqref{eq:desiredproperty} holds since $X_u^{(i+1)}=Y_u^{(i+1)}$ for all $u\in V$.
   Otherwise we set $v_{i+1}'=v_i'$,
   and~\eqref{eq:desiredproperty} holds as well.
\end{proof}

\begin{lem}[{Monotonicity}]\label{lem:monotonicity}
   Let $k\geq 1$ be fixed and $u_1,u_2,\ldots,u_k \in V$ be arbitrary.
   Let $(X^{(k)}_v)_{v\in V}$ be the load of the vertices after $k$ balls are allocated with birthplaces $u_1,u_2,\ldots,u_k$.
   Let $i\in \{1,2,\ldots,k\}$ be fixed, and let $(Z^{(i,k)}_v)_{v\in V}$ be the load of the vertices of $G$ after $k-1$ balls are allocated
   with birthplaces $u_1,u_2,\ldots,u_{i-1},u_{i+1},u_{i+2},\ldots,u_k$.
   In other words, $Z_v^{(i,k)}$ is obtained from $X_v^{(k)}$ by removing ball $i$.
   There exists a coupling such that, with probability $1$,
   $$
      \sum_{v\in V} \left|X_v^{(k)} - Z_v^{(i,k)}\right| =1.
   $$
\end{lem}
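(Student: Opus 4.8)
The plan is to imitate the coupling used in the proof of \lemref{lipschitz}. For every $v\in V$ and every ball label $j$, introduce an independent uniformly random permutation $\xi_v^{(j)}$ of $N_v$, and in \emph{both} the $X$-process (balls born at $u_1,\dots,u_k$) and the $Z$-process (balls born at $u_1,\dots,u_{i-1},u_{i+1},\dots,u_k$, with the ball born at $u_j$ keeping the label $j$) use $\xi_v^{(j)}$ to break ties whenever the ball labeled $j$ sits at $v$. Write $X^{(j)}$ and $Z^{(j)}$ for the load vectors after all balls with label at most $j$ have been processed in the respective process (so for $j\ge i$ the $Z$-process has handled only $j-1$ balls). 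Since the two processes agree on the birthplaces and on the tie-breaking of balls $1,\dots,i-1$, we have $X^{(j)}=Z^{(j)}$ for $j<i$; letting $v_i$ be the vertex where ball $i$ halts in the $X$-process, this gives $X^{(i)}=Z^{(i)}+e_{v_i}$, where $e_u$ denotes the indicator vector of the vertex $u$. If $i=k$ we are done. Otherwise I will prove, by induction on $j$ for $i\le j\le k$, the invariant that there is a vertex $p_j$ with $X^{(j)}=Z^{(j)}+e_{p_j}$; that is, the load vectors agree at every vertex except $p_j$, where the $X$-process has exactly one extra ball. The base case $j=i$ holds with $p_i=v_i$.

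For the inductive step, add ball $j+1$ (born at $u_{j+1}$) to both processes and let $w$, $w'$ be the vertices where its local search halts in the $X$- and the $Z$-process. The claim that makes the induction go through is: \emph{if $w\neq w'$, then $w'=p_j$}. Granting it: if $w=w'$ then $X^{(j+1)}=Z^{(j+1)}+e_{p_j}$, and if $w\neq w'$ then $X^{(j+1)}=Z^{(j+1)}+e_{w}$, so in either case the invariant is restored (with $p_{j+1}=p_j$ or $p_{j+1}=w$, respectively). Taking $j=k$ then yields $\sum_{v\in V}|X^{(k)}_v-Z^{(i,k)}_v|=1$ — this sum cannot be $0$, since the $X$-process contains exactly one more ball in total than the $Z$-process — which is the assertion of the lemma.

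To prove the claim I will run the two copies of the local search of ball $j+1$ simultaneously and show that they follow identical trajectories until (if ever) the $Z$-copy steps onto $p_j$, and that once the $Z$-copy is at $p_j$ it must halt there. The structural input is \lemref{balanced}: every neighbor $u$ of $p_j$ satisfies $X^{(j)}_u\ge X^{(j)}_{p_j}-1=Z^{(j)}_{p_j}$, and since $Z^{(j)}_u=X^{(j)}_u$ for $u\neq p_j$ this says $Z^{(j)}_u\ge Z^{(j)}_{p_j}$ for all neighbors $u$ of $p_j$; hence $p_j$ is a local minimum of $Z^{(j)}$ and the $Z$-copy, once there, halts. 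It then suffices to check, whenever the two copies occupy a common vertex $v$, that they make the same move unless the $Z$-copy moves to $p_j$. If $p_j\notin N_v\cup\{v\}$ the loads visible to the ball are identical in the two processes and there is nothing to prove; the only nontrivial situation is $p_j\in N_v$ with $v\neq p_j$, where $p_j$ carries one \emph{fewer} ball in the $Z$-process. A short case analysis on the smallest load among the neighbors of $v$ — comparing it with $X^{(j)}_{p_j}$ and $Z^{(j)}_{p_j}=X^{(j)}_{p_j}-1$, and using that $\xi_v^{(j+1)}$ resolves the remaining ties identically in both processes — shows that either $p_j$ is the unique strictly-smallest-load neighbor of $v$ in the $Z$-process (so the $Z$-copy moves to $p_j$, then halts), or $p_j$ is not strictly preferable and both copies choose the same neighbor. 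Finiteness of the search is automatic, since loads strictly decrease along the search path and so no vertex is revisited.

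I expect this last case analysis to be the only real obstacle. The delicate point is the configuration in which, in the $Z$-process, $p_j$ is tied for smallest load with some other neighbor of $v$ — which happens exactly when $X^{(j)}_{p_j}$ equals the minimum $X^{(j)}$-load over the neighbors of $v$ other than $p_j$: there one must argue that if the shared permutation $\xi_v^{(j+1)}$ does not send the $Z$-copy to $p_j$, then it sends it to a neighbor of $v$ that in $X^{(j)}$ has \emph{strictly} smaller load than $p_j$, hence is exactly the neighbor the $X$-copy selects as well, so the two trajectories stay together. Everything else is routine bookkeeping parallel to \lemref{lipschitz}.
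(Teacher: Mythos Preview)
Your direct coupling is correct: the invariant $X^{(j)}=Z^{(j)}+e_{p_j}$ is exactly the right one, and the observation that $p_j$ is automatically a $Z^{(j)}$-local-minimum (via smoothness of $X^{(j)}$, \lemref{balanced}) is what makes the induction go through. Two cosmetic points to tidy: you should also treat the starting case $v=p_j$ explicitly (trivial, since the $Z$-copy halts at once there, giving $w'=p_j$); and your characterisation of the tie configuration is off by one --- $p_j$ is tied for smallest $Z$-load with another neighbour of $v$ precisely when $X^{(j)}_{p_j}-1$, not $X^{(j)}_{p_j}$, equals $\min_{u\in N_v\setminus\{p_j\}}X^{(j)}_u$ --- though the argument you sketch for that case is unaffected. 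The paper, however, avoids redoing the coupling altogether by a one-line reduction: adjoin an isolated vertex $w$ to $G$ and apply \lemref{lipschitz} on the enlarged graph with $u_i'=w$. Since $w$ has no neighbours the ball born there stays put, so $Y^{(k)}_w=1$, $X^{(k)}_w=0$, and $Y^{(k)}_v=Z^{(i,k)}_v$ for every $v\in V$; the Lipschitz bound of $2$ over $V\cup\{w\}$ then collapses to exactly $1$ over $V$. Your approach makes the mechanism explicit and is self-contained; the paper's trick gets the lemma essentially for free once \lemref{lipschitz} is in hand.
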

\begin{proof}
   Let $G'$ be the graph obtained from $G$ by adding an isolated node $w$; i.e., $G'$ has vertex set $V\cup \{w\}$ and the same edge set as $G$.
   Applying \lemref{lipschitz} to $G'$ with the same choice of $u_1,\ldots,u_k\in V$ and with $u_{i}'=w$ gives
   \[
      \sum_{v \in V \cup \{w\} } \left| X_v^{(k)} - Y_v^{(k)} \right| = 2.
   \]
   Since $Y_{w}^{(k)} = 1$, $X_w^{(k)} = 0$ and $Z_{v}^{(i,k)} = Y_v^{(k)}$ for any $v \in V$, we conclude that
   \[
       \sum_{v \in V } \left| X_v^{(k)} - Z_v^{(i,k)} \right| = \sum_{v \in V } \left| X_v^{(k)} - Y_v^{(k)} \right| = 1. \qedhere
   \]
\end{proof}

In many of our proofs we analyze a continuous-time variant where the number of balls is \emph{not} fixed, but is given by a Poisson random variable with mean $m$. Equivalently, in this variant balls are born at each vertex according to a Poisson process of rate $1/n$.
We refer to this as the \emph{Poissonized} version. We will use the Poissonized versions of both the local search allocation and the $1$-choice process in our proofs. Since the probability that a mean-$m$ Poisson random variable takes the value $m$ is of order $\Theta(m^{-1/2})$ we obtain the
following relation.
\begin{lem}\label{lem:poisson}
   Let $\mathcal{A}$ be an event that holds for the Poissonized version of the local search allocation (respectively, $1$-choice process) with probability $1-\epsilon$ for some $\epsilon\in(0,1)$. Then, the probability that
   $\mathcal{A}$ holds for the non-Poissonized version of the local search allocation (respectively, $1$-choice process) is at least $1-\Oh(\epsilon \sqrt{m})$.
\end{lem}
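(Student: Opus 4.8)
\outline The plan is to carry out the standard de-Poissonization argument. Fix the process under consideration (the local search allocation or the $1$-choice process) and let $N$ denote the total number of balls in the Poissonized version, so that $N$ is a Poisson random variable with mean $m$. The key observation is that, conditionally on $\{N=k\}$, the Poissonized process has exactly the law of the non-Poissonized process run with $k$ balls: in the Poissonized version balls are born at the $n$ vertices according to independent rate-$1/n$ Poisson processes over the time interval $[0,m]$, whose superposition is a rate-$1$ Poisson process on $[0,m]$; conditioned on this superposition having $k$ points, those points are i.i.d.\ uniform on $[0,m]$ and each is independently assigned to a uniformly random vertex, so discarding the (irrelevant) birth times leaves precisely $k$ balls with i.i.d.\ uniform birthplaces, processed in order of birth. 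Hence, writing $\bar{\mathcal A}$ for the complement of $\mathcal A$ and noting that $\mathcal A$ is determined by the final configuration (and thus is a well-defined event for any number of balls), we have
\[
   \mathbf{Pr}\left[\bar{\mathcal A}\text{ in the Poissonized version}\right]
   \;=\; \sum_{k\ge 0} \mathbf{Pr}\left[N=k\right]\,\mathbf{Pr}\left[\bar{\mathcal A}\text{ with }k\text{ balls}\right].
\]

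Next I would keep only the $k=m$ term of this sum. Since by assumption the left-hand side is at most $\epsilon$, this gives
\[
   \mathbf{Pr}\left[\bar{\mathcal A}\text{ with }m\text{ balls}\right]
   \;\le\; \frac{\mathbf{Pr}\left[\bar{\mathcal A}\text{ in the Poissonized version}\right]}{\mathbf{Pr}\left[N=m\right]}
   \;\le\; \frac{\epsilon}{\mathbf{Pr}\left[N=m\right]}.
\]
Finally, $\mathbf{Pr}\left[N=m\right]=e^{-m}m^m/m!$, and Stirling's formula yields $\mathbf{Pr}\left[N=m\right]=\Theta(m^{-1/2})$; substituting this bound gives $\mathbf{Pr}\left[\bar{\mathcal A}\text{ with }m\text{ balls}\right]=\Oh(\epsilon\sqrt m)$, which is exactly the claim.

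There is essentially no serious obstacle here: the only point that needs care is the identification of the conditional law of the Poissonized process given $\{N=k\}$ with the non-Poissonized process on $k$ balls, which I sketched above and which relies only on elementary properties of Poisson processes (superposition, and the conditional distribution of the points given their number). Everything else is the two short displays above, so I would expect the whole proof to take only a few lines.
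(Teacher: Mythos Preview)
Your proposal is correct and is exactly the argument the paper has in mind: the paper does not give a formal proof but simply remarks, just before the lemma, that $\Pro{N=m}=\Theta(m^{-1/2})$, which is precisely the Stirling step you use after conditioning on $\{N=m\}$. Your write-up just makes explicit the conditioning identity and the bound $\Pro{\bar{\mathcal A}\text{ with }m\text{ balls}}\le \epsilon/\Pro{N=m}$ that the paper leaves implicit.
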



\section{Key technical argument}\label{sec:key}

In this section we prove a key technical result (Lemma~\ref{lem:majorization} below) that will play a central role in our proofs later.

Let $\mu\colon V \to \mathbb{Z}$ be any integer function 
on the vertices of $G$ that satisfies the following property:
\begin{equation}
   \text{for any two neighbors $u,v \in V$, we have $|\mu(u)-\mu(v)| \leq 1$}.
   \label{eq:mu}
\end{equation}
We see $\mu$ as an initial attribution of weights to the vertices of $G$. Then, for any $m\geq 1$, after $m$ balls are allocated,
we define the weight of vertex $v$ by
\begin{equation}
   W_v^{(m)} = X_{v}^{(m)} + \mu(v).
   \label{eq:phase}
\end{equation}
Note that for any $m\geq 1$ and $v\in V$, we have that $W_v$ can increase by at most one after each step;
i.e., $W_v^{(m)}\in\{W_v^{(m-1)},W_v^{(m-1)}+1\}$. The lemma below establishes that a ball cannot be allocated to
a vertex with larger weight than the vertex where the ball is born.

\begin{lem}\label{lem:phasesearch}
   Let $m\geq 1$ and denote by $v$ the vertex where ball $m$ is born (i.e., $v=U_m$).
   Let $v'$ be the vertex where ball $m$ is allocated. Then,
   $
      W_{v'}^{(m-1)}\leq W_{v}^{(m-1)}.
   $
\end{lem}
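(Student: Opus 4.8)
The plan is to track the quantity $W^{(m-1)}$ along the path traced by ball $m$'s local search and show it is non-increasing at every step. Let $v = w_0, w_1, \ldots, w_\ell = v'$ be the sequence of vertices visited by ball $m$ during its local search, so that $w_0 = U_m$ is the birthplace and $w_\ell$ is the vertex where the ball finally comes to rest. The crucial point is that this entire local search takes place against the \emph{frozen} load configuration $X^{(m-1)}$: no ball has yet been added, so the load of every vertex is $X_{w}^{(m-1)}$ throughout.

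The key step is the per-edge estimate. By definition of the local search, whenever the ball moves from $w_j$ to $w_{j+1}$ it moves to an adjacent vertex of strictly smaller load, i.e.\ $X_{w_{j+1}}^{(m-1)} < X_{w_j}^{(m-1)}$; since loads are integers this gives $X_{w_{j+1}}^{(m-1)} \leq X_{w_j}^{(m-1)} - 1$. On the other hand $w_j$ and $w_{j+1}$ are neighbors in $G$, so property~\eqref{eq:mu} yields $\mu(w_{j+1}) \leq \mu(w_j) + 1$. Adding these two inequalities and using the definition~\eqref{eq:phase} of $W^{(m-1)}$ gives $W_{w_{j+1}}^{(m-1)} \leq W_{w_j}^{(m-1)}$ for every $j = 0, 1, \ldots, \ell-1$.

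Telescoping along the path then gives $W_{v'}^{(m-1)} = W_{w_\ell}^{(m-1)} \leq W_{w_0}^{(m-1)} = W_{v}^{(m-1)}$, which is exactly the claim. (Formally this is a trivial induction on $\ell$, the number of steps in the local search; the base case $\ell = 0$, i.e.\ $v$ is already a local minimum, is immediate since then $v' = v$.)

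I do not expect any real obstacle here: the argument is a one-line telescoping once the per-edge inequality is in place. The only point that requires a moment's care is to notice that the load configuration seen by ball $m$ during its search is $X^{(m-1)}$ at \emph{every} step (the ball is not placed until the search ends), so that the strict-decrease property of the local search and the inequality $X_{w_{j+1}}^{(m-1)} \leq X_{w_j}^{(m-1)} - 1$ are both with respect to the same time index $m-1$; this is what makes $W^{(m-1)}$ — rather than some time-varying quantity — monotone along the path.
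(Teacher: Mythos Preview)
Your proof is correct and follows essentially the same approach as the paper: both argue that $W^{(m-1)}$ is non-increasing along each edge of the local-search path and then telescope (the paper phrases this as an induction on the steps of the search). The only cosmetic difference is that the paper writes $X_{w_{j+1}}^{(m-1)} = X_{w_j}^{(m-1)} - 1$ (implicitly using smoothness, Lemma~\ref{lem:balanced}) whereas you use the weaker $\leq$ obtained directly from strict decrease plus integrality, which is equally sufficient.
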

\begin{proof}
   Assume that $v\neq v'$, thus the local search of ball $m$ visits at least two vertices.
   Let $w$ be the second vertex visited during the local search. Since $v$ and $w$ are neighbors in $G$, we have
   $$
      W_w^{(m-1)}
      = X_w^{(m-1)} + \mu(w)
      = X_v^{(m-1)} - 1 + \mu(w)
     \leq X_v^{(m-1)} + \mu(v)
     = W_v^{(m-1)}.
   $$
   Proceeding inductively for each step of the local search we obtain $W_{v'}^{(m-1)} \leq W_v^{(m-1)}$.
\end{proof}

Now we use the definition of majorization from~\eqref{eq:majorization}.
Let $\bar W_v^{(m)}$ be the weight of vertex $v$ after $m$ balls are allocated according to the $1$-choice process;
i.e., $\bar W_v^{(m)} = \bar X_v^{(m)} + \mu(v)$ for all $v\in V$.
The lemma below extends the result of Lemma~\ref{lem:1choice} to the weights of the vertices; 
Lemma~\ref{lem:1choice} can be obtained from Lemma~\ref{lem:majorization} by setting $\mu(v)=0$ for all $v\in V$.
\begin{lem}\label{lem:majorization}
   For any fixed $m\geq 0$,
   we can couple $W^{(m)}$ and $\bar W^{(m)}$ so that, with probability 1, $\bar W^{(m)}$ majorizes $W^{(m)}$.
\end{lem}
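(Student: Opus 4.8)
The plan is to prove the majorization by a step-by-step coupling of the local search allocation with the $1$-choice process, maintaining a stronger invariant than majorization alone. The natural coupling is to give the two processes the same sequence of birthplaces $U_1,U_2,\ldots,U_m$ (and couple tie-breaking randomness as convenient). Under this coupling, after each ball, the $1$-choice weight vector $\bar W^{(m)}$ and the local search weight vector $W^{(m)}$ differ only at the two vertices where ball $m$ landed in each process: say ball $m$ is born at $U_m=v$, placed at $v$ in the $1$-choice process and at $v'$ in the local search process. So going from step $m-1$ to step $m$, the $1$-choice vector gains a unit at $v$, while the local search vector gains a unit at $v'$, where (by Lemma~\ref{lem:phasesearch}) $W_{v'}^{(m-1)}\le W_v^{(m-1)}$.

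The key invariant I would carry along inductively is that $\bar W^{(m)}$ majorizes $W^{(m)}$ \emph{and} that the coupling can be chosen so that the two vectors remain ``close'' in a way compatible with majorization being preserved under the next step. Concretely, the clean way to do this is the following well-known fact about majorization: if $A$ majorizes $B$, and we form $A'$ from $A$ by adding $1$ to some coordinate $a_i$, and $B'$ from $B$ by adding $1$ to some coordinate $b_j$, then $A'$ majorizes $B'$ provided $a_i \ge b_j$ (i.e., provided the coordinate we bump up in $A$ was already at least as large as the coordinate we bump up in $B$). First I would state and prove this elementary fact: sort both vectors; the partial sums $\sum_{i\le\kappa}$ can each increase by at most $1$, and one checks that the partial sum of $A'$ fails to dominate that of $B'$ only if the bump in $B$ entered the top-$\kappa$ block while the bump in $A$ did not — but if $b_j$ is among the $\kappa$ largest of $B'$ then $b_j$ is large, and since $a_i\ge b_j$, $a_i$ is at least as large, so $a_i$ is among the $\kappa$ largest of $A'$ as well (being careful with the threshold/ties, which is where one has to argue precisely).

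With that fact in hand the induction is almost immediate. The base case $m=0$ is trivial ($W^{(0)}=\bar W^{(0)}=\mu$). For the inductive step, assume $\bar W^{(m-1)}$ majorizes $W^{(m-1)}$; ball $m$ is born at $v=U_m$ in both processes; in the $1$-choice process it is placed at $v$, so $\bar W^{(m)}$ is $\bar W^{(m-1)}$ with a unit added at coordinate $v$; in the local search process it is placed at some $v'$ with $W_{v'}^{(m-1)}\le W_v^{(m-1)}$ by Lemma~\ref{lem:phasesearch}. So I want to apply the fact with $A=\bar W^{(m-1)}$, bumping coordinate $v$, and $B=W^{(m-1)}$, bumping coordinate $v'$; this needs $\bar W_v^{(m-1)}\ge W_{v'}^{(m-1)}$. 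This follows by chaining: $\bar W_v^{(m-1)}\ge W_v^{(m-1)}$ — wait, majorization of vectors does not directly give a coordinatewise inequality, so here is the one subtlety I need to handle. Actually majorization \emph{does} give $\bar W_{\max}\ge W_{\max}$ but not $\bar W_v\ge W_v$ for the specific coordinate $v$; instead I should observe $W_{v'}^{(m-1)}\le W_v^{(m-1)} \le W_{\max}^{(m-1)} \le \bar W_{\max}^{(m-1)}$, which is not quite what the fact needs either. The honest fix is to strengthen the fact: it suffices that $a_i \ge b_j$ holds \emph{after reindexing}, or more usefully, to reprove the inductive step directly by examining partial sums, using only $W_{v'}^{(m-1)}\le W_v^{(m-1)}$ together with the majorization hypothesis. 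That direct partial-sum argument is the main obstacle, so let me outline it.

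Fix $\kappa$. Let $S$ be a set of $\kappa$ vertices achieving the top-$\kappa$ sum of $\bar W^{(m-1)}$; by the induction hypothesis $\sum_{u\in S}\bar W_u^{(m-1)} \ge (\text{top-}\kappa\text{ sum of }W^{(m-1)})\ge \sum_{u\in S} W_u^{(m-1)}$. Now distinguish cases on whether $v\in S$. If $v\in S$, then the top-$\kappa$ sum of $\bar W^{(m)}$ is at least $\sum_{u\in S}\bar W_u^{(m-1)} + 1$; meanwhile the top-$\kappa$ sum of $W^{(m)}$ is at most (top-$\kappa$ sum of $W^{(m-1)}$)$+1$, and we are done. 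If $v\notin S$, then the top-$\kappa$ sum of $\bar W^{(m)}$ equals (top-$\kappa$ sum of $\bar W^{(m-1)}$) (adding $1$ at $v$ cannot force $v$ into the top $\kappa$ without displacing someone of equal-or-larger value — more carefully, one picks $S$ to already be a maximizer for $\bar W^{(m)}$, and if $v\in S$ we are in the previous case, so assume $v\notin S$, whence $\bar W_u^{(m)} = \bar W_u^{(m-1)}$ for all $u\in S$). In this second case I must show $\sum_{u\in S}\bar W_u^{(m-1)} \ge (\text{top-}\kappa\text{ sum of }W^{(m)})$. If $v'$ is \emph{not} among the top-$\kappa$ of $W^{(m)}$, then that sum equals the top-$\kappa$ sum of $W^{(m-1)}$ and we're done by the hypothesis. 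If $v'$ \emph{is} among the top-$\kappa$ of $W^{(m)}$, let $T$ be a top-$\kappa$ set for $W^{(m)}$ containing $v'$; then every $u\in T\setminus\{v'\}$ has $W_u^{(m)} = W_u^{(m-1)} \ge W_{v'}^{(m)} - 1 = W_{v'}^{(m-1)}$... at this point I would compare $T$ against $S$ using the hypothesis applied with the set $T$ (or with $T$ adjusted), exploiting $W_{v'}^{(m-1)}\le W_v^{(m-1)}\le \bar W_v^{(m-1)}$ — and here is exactly where I need the extra input $\bar W_v^{(m-1)}\ge W_v^{(m-1)}$, which, note, is \emph{not} a consequence of majorization; so the real strengthening I must carry in the induction is the coordinatewise bound $\bar W_v^{(m)}\ge W_v^{(m)}$ for \emph{every} $v$. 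I would therefore restate the lemma's proof as a simultaneous induction proving both (i) $\bar W^{(m)}$ majorizes $W^{(m)}$ and (ii) for every $v$, $\bar W_v^{(m)} \ge W_v^{(m)}$; with (ii) available, the partial-sum casework above closes cleanly, and (ii) itself is maintained because the $1$-choice process adds its ball at the birthplace $v$ while the local search adds at $v'\ne v$ with no increase at $v$, so any coordinate where $W$ increases is a coordinate where... no — this still fails, since $W$ can increase at $v'$ while $\bar W$ does not. So (ii) in that naive form is false too, and the correct stronger invariant must be something like: there is a coupling (choosing which ball goes where using the freedom in tie-breaking) under which $\bar W^{(m)} - W^{(m)}$ is, coordinatewise, a $\{0,1\}$-vector supported on a set we control — but the cleanest route, and the one I would actually write up, is to invoke the standard majorization lemma in the form: \emph{if $A$ majorizes $B$ and $b_j \le a_i$ where moreover $a_i \ge B_{(\kappa)}$ whenever adding to $b_j$ enters the top $\kappa$}, reducing to checking that $v'$ entering the top-$\kappa$ of $W^{(m)}$ forces $W_{v'}^{(m-1)}\ge$ the $(\kappa{+}1)$-st largest entry of $W^{(m-1)}$, which combined with majorization and $W_{v'}^{(m-1)}\le W_v^{(m-1)}$ pins down the needed inequality. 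The main obstacle, to be explicit, is getting this majorization-bookkeeping lemma stated in exactly the form that matches the one-sided information $W_{v'}^{(m-1)}\le W_v^{(m-1)}$ from Lemma~\ref{lem:phasesearch}; once that combinatorial lemma is isolated and proved, the rest is a routine induction on $m$.
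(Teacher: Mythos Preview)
Your proposal has a genuine gap: the same-birthplace coupling together with a majorization-only inductive hypothesis does \emph{not} suffice. The combinatorial ``fact'' you hope for --- that if $A$ majorizes $B$ and one adds $1$ to $A$ at index $v$ and $1$ to $B$ at index $v'$ with $B_{v'}\le B_v$, then $A'$ majorizes $B'$ --- is simply false. Take $n=2$, $A=\bar W^{(m-1)}=(2,0)$, $B=W^{(m-1)}=(0,2)$ (so $A$ majorizes $B$, as they are equal as multisets), $v=v'=2$; then $B_{v'}=2\le 2=B_v$, yet $A'=(2,1)$ does not majorize $B'=(0,3)$. You correctly sense this obstruction midway through and try several strengthened invariants (coordinatewise domination, transport structure), but you also correctly note each of them fails; the proposal ends without a working argument.

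The paper's proof sidesteps the whole difficulty by \emph{not} coupling via the same birthplace. Instead it couples by \emph{rank}: pick $\ell$ uniformly in $\{1,\ldots,n\}$; in the $1$-choice process the ball is born at the vertex of rank $\ell$ in $\bar W^{(m-1)}$, while in the local search process the ball is born at the vertex of rank $\ell$ in $W^{(m-1)}$. Now let $\tilde W^{(m)}$ be $W^{(m-1)}$ with a unit added at its rank-$\ell$ vertex. By Lemma~\ref{lem:abku} (adding at the \emph{same rank} preserves majorization), $\bar W^{(m)}$ majorizes $\tilde W^{(m)}$. Separately, Lemma~\ref{lem:phasesearch} gives that the local search destination $i_\iota$ satisfies $W_{i_\iota}^{(m-1)}\le W_{i_\ell}^{(m-1)}$, so $W^{(m)}=\tilde W^{(m)}-e_{i_\ell}+e_{i_\iota}$ is obtained from $\tilde W^{(m)}$ by moving a unit from a larger entry to a smaller-or-equal one, whence $\tilde W^{(m)}$ majorizes $W^{(m)}$. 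Transitivity finishes the induction. The point is that Lemma~\ref{lem:abku} needs the increments to hit the same \emph{rank}, not the same \emph{vertex}; once you decouple the birthplaces in this way, no coordinatewise comparison between $\bar W$ and $W$ is ever required.
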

For the proof of this lemma, we need the following result from \cite{ABKU99}.
\begin{lem}[{\cite[Lemma~3.4]{ABKU99}}]\label{lem:abku}
Let $v=(v_1,v_2,\ldots,v_n)$, $u=(u_1,u_2,\ldots,u_n)$ be two vectors such that $v_1 \geq v_2 \geq \cdots \geq v_n$ and $u_1 \geq u_2 \geq \cdots \geq u_n$. If $v$ majorizes $u$, then also $v+e_{i}$ majorizes $u+e_{i}$, where $e_{i}$ is the $i$th unit vector.
\end{lem}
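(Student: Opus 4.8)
The plan is to reduce the statement to a single one-parameter inequality between upper-tail sums of $v$ and of $u$, and then to establish that inequality by a short counting argument; I will assume the vectors are integer-valued, which is the setting in which the lemma is used (and is in fact necessary: for real vectors the statement fails, as $v=(1,0)$, $u=(\tfrac12,\tfrac12)$ shows after adding $e_2$). Throughout write $(x)_+=\max\{x,0\}$, let $f_w(t)=\sum_{j=1}^n(w_j-t)_+$ for a vector $w$ and $t\in\R$, and let $S_\kappa(w)$ denote the sum of the $\kappa$ largest entries of $w$ (so $S_0(w)=0$). The one tool I would invoke is the standard characterization of majorization via the functions $f_w$: if $\sum_j A_j=\sum_j B_j$, then $A$ majorizes $B$ if and only if $f_A(t)\ge f_B(t)$ for all $t\in\R$. (This is immediate from the identity $S_\kappa(A)=\min_{t\in\R}\bigl(\kappa t+f_A(t)\bigr)$, which follows by minimizing the convex piecewise-linear function $t\mapsto\kappa t+f_A(t)$.)

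First I would dispose of the reduction. The vectors $v+e_i$ and $u+e_i$ have equal coordinate sums, so by the characterization it suffices to prove $f_{v+e_i}(t)\ge f_{u+e_i}(t)$ for all $t$. Both sides are piecewise linear with all breakpoints at integers, hence their difference is affine on every interval $[s,s+1]$ with $s\in\Z$, and so it is enough to check integer arguments $t=s$. For any $w$ one has $f_{w+e_i}(t)=f_w(t)+g_w(t)$ with $g_w(t):=(w_i+1-t)_+-(w_i-t)_+$, and a direct check shows $g_w(s)=\ind{s\le w_i}$ for integer $s$. Therefore, for integer $s$,
\[
  f_{v+e_i}(s)-f_{u+e_i}(s)=\bigl(f_v(s)-f_u(s)\bigr)+\bigl(\ind{s\le v_i}-\ind{s\le u_i}\bigr),
\]
where $f_v(s)-f_u(s)\ge 0$ because $v$ majorizes $u$. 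If $v_i\ge u_i$ the second term is nonnegative and we are done; so assume $v_i<u_i$, in which case the second term is $0$ except when $v_i+1\le s\le u_i$, where it equals $-1$. Hence the entire lemma reduces to the following claim.

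\emph{Claim.} For every integer $s$ with $v_i+1\le s\le u_i$ we have $\sum_{j}(v_j-s)_+\ge\sum_{j}(u_j-s)_++1$.

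To prove the Claim I would argue directly. Put $a=|\{j:v_j\ge s\}|$ and $b=|\{j:u_j\ge s\}|$; since $v$ and $u$ are sorted non-increasingly, $\{j:v_j\ge s\}=\{1,\dots,a\}$ and $\{j:u_j\ge s\}=\{1,\dots,b\}$, so $\sum_j(v_j-s)_+=S_a(v)-as$ and $\sum_j(u_j-s)_+=S_b(u)-bs$. Since $v_i<s\le u_i$, index $i$ is not among $\{1,\dots,a\}$ but is among $\{1,\dots,b\}$, so $a\le i-1<i\le b$, and in particular $b-a\ge 1$. Every $v_j$ with $a<j\le b$ is an integer strictly less than $s$, hence at most $s-1$, so $S_a(v)=S_b(v)-\sum_{j=a+1}^{b}v_j\ge S_b(v)-(b-a)(s-1)$; combining this with $S_b(v)\ge S_b(u)$ (majorization) gives
\[
  \sum_{j}(v_j-s)_+=S_a(v)-as\ge S_b(u)-(b-a)(s-1)-as=\bigl(S_b(u)-bs\bigr)+(b-a)\ge\sum_{j}(u_j-s)_++1 .
\]

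The reduction is routine once the $f_w$-characterization is recalled, so I expect the only real content to be the Claim; the point there, and the step I would most want to double-check, is to compare $v$ against $u$ at the index $b$ determined by the $u$-side (the number of $u$-coordinates that are $\ge s$) rather than at any ``natural'' index, since this is exactly what makes the wasteful bound $v_j\le s-1$ on the discarded block $\{a+1,\dots,b\}$ and the integer slack $b-a\ge 1$ conspire to give the extra $+1$. The only other places needing care are the elementary facts $g_w(s)=\ind{s\le w_i}$ for integer $s$ and the piecewise-linearity reduction to integer arguments.
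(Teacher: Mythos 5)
Your proof is correct, but note that the paper does not prove this lemma at all: it is quoted verbatim from \cite[Lemma~3.4]{ABKU99} and used as a black box, so there is no in-paper argument to compare against. Relative to the original ABKU proof (a direct case analysis on sorted prefix sums, tracking where the incremented entry lands after re-sorting), your route is genuinely different: you pass to the tail-sum functions $f_w(t)=\sum_j(w_j-t)_+$, use the standard equivalence (for equal-sum vectors) between majorization and pointwise domination of $f_w$, and reduce everything to the single inequality $f_v(s)\geq f_u(s)+1$ for integers $v_i<s\leq u_i$, which you then prove by comparing $v$ and $u$ at the cut index $b=|\{j:u_j\geq s\}|$; the algebra there is correct, and the key strict gain of $+1$ indeed comes from $b-a\geq 1$ together with the integer bound $v_j\leq s-1$ on the discarded block. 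Two remarks. First, you correctly observe that the statement as literally written fails for real vectors (your example $v=(1,0)$, $u=(\tfrac12,\tfrac12)$, add $e_2$, is valid) and that integrality must be assumed; this is consistent with how both \cite{ABKU99} and this paper use the lemma, since here it is applied to the weight vectors $W=X+\mu$ with $\mu$ integer-valued, but it is worth flagging explicitly as a hypothesis you are adding to the stated lemma. Second, in the main decomposition you invoke both directions of the $f_w$-characterization (you need ``$v$ majorizes $u$ $\Rightarrow$ $f_v\geq f_u$'' as well as the converse for $v+e_i,u+e_i$); the direction you justify via $S_\kappa(A)=\min_t(\kappa t+f_A(t))$ is the converse, while the forward direction follows just as easily from $f_A(t)=\max_\kappa\bigl(S_\kappa(A)-\kappa t\bigr)$ — a one-line addition would make the argument fully self-contained.
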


\begin{proof}[Proof of Lemma~\ref{lem:majorization}]
The proof is by induction on $m$. Clearly, for $m=0$, we have $W_v^{(0)} = \bar W_v^{(0)}=\mu(v)$ for all $v\in V$.
Now, assume that we can couple $W^{(m-1)}$ with $\bar W^{(m-1)}$ so that $\bar W^{(m-1)}$ majorizes $W^{(m-1)}$.
Let $i_1,i_2,\ldots, i_n$ be distinct elements of $V$ so that
\[
W_{i_1}^{(m-1)} \geq W_{i_2}^{(m-1)} \geq \cdots \geq W_{i_n}^{(m-1)}.
\]
Similarly, let $j_1,j_2,\ldots, j_n$ be distinct elements of $V$ so that
\[
\bar W_{j_1}^{(m-1)} \geq \bar W_{j_2}^{(m-1)} \geq \cdots \geq \bar W_{j_n}^{(m-1)}.
\]
Let $\ell$ be a uniformly random integer from $1$ to $n$.
Then, for the process $(W_v^{(m)})_{v\in V}$, let the birthplace of ball $m$ be vertex $i_\ell$ and for the process $(\bar W_v^{(m)})_{v\in V}$, let the birthplace of ball $m$ be $j_\ell$. For the process $(W_v^{(m)})_{v\in V}$, ball $m$ may not necessarily be allocated at vertex $i_{\ell}$, so let us define $\iota$ as the integer so that
$i_{\iota}$ is the vertex to which ball $m$ is allocated.

In order to prove that $\bar W^{(m)}$ majorizes $W^{(m)}$,
let us define by $\tilde W^{(m)}$ the load vector which is obtained
from $W^{(m-1)}$ by allocating ball $m$ to vertex $i_{\ell}$ (the birthplace of ball $m$). Applying \lemref{abku} gives that $\bar{W}^{(m)}$ majorizes $\tilde{W}^{(m)}$, since by the induction hypothesis $\bar{W}^{(m-1)}$ majorizes $W^{(m-1)}$. Next observe that
\[
  W^{(m)} = \tilde{W}^{(m)} - e_{i_{\ell}} + e_{i_{\iota}},
\]
so we obtain the vector $W^{(m)}$ from $\tilde{W}^{(m)}$ by removing one from vertex $i_{\ell}$ and adding one to vertex $i_{\iota}$.
By \lemref{phasesearch}, we have $W_{i_{\iota}}^{(m-1)} \leq W_{i_{\ell}}^{(m-1)}$. This implies $\tilde{W}_{i_{\ell}}^{(m)}
= W_{i_{\ell}}^{(m-1)} + 1
\geq W_{i_{\iota}}^{(m-1)} + 1$ and in turn that $\tilde{W}^{(m)}$  majorizes $W^{(m)}$. Combining this with the insight that $\bar{W}^{(m)}$ majorizes $\tilde{W}^{(m)}$ implies that $\bar{W}^{(m)}$ majorizes $W^{(m)}$. This completes the induction and the proof.
\end{proof}

Now we illustrate the usefulness of the above result by relating the probability of a vertex to have a certain load
with the probability that balls are born in a neighborhood around a vertex.
Recall that the load vector is smooth (cf.\ Lemma~\ref{lem:balanced}), which means that if a vertex $v$ has load $\ell$,
then a vertex at distance $r$ from $v$ has load at least $\ell-r$ and at most $\ell+r$.

\begin{lem}\label{lem:pyramid}
   For any $v\in V$, and any $\ell,m\geq 1$, we have
   $$
      \Pro{X_v^{(m)} \geq \ell} \geq \Pro{\bigcap\nolimits_{w\in B_v^{\ell-1}} \Big\{ \bar X_{w}^{(m)} \geq \ell-d_G(v,w)\Big\}}
   $$
   and
   $$
      \Pro{X_v^{(m)} \geq \ell} \leq \Pro{\bigcup\nolimits_{w\in V} \Big\{ \bar X_{w}^{(m)} \geq \ell+d_G(v,w)\Big\}}.
   $$
\end{lem}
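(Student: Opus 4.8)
The plan is to derive \emph{both} inequalities from the majorization result \lemref{majorization} — applied with two different weight functions $\mu$ — together with the smoothness property \lemref{balanced}. The point is that in \lemref{majorization} we are free to choose $\mu$, and a choice ``centred at $v$'' converts a statement about the whole $1$-choice load vector into one about the single coordinate $X_v^{(m)}$, by forcing $v$ to sit at an extreme of the weight vector $W^{(m)}$.

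For the upper bound I would take $\mu(w)=-d_G(v,w)$ for $w$ in the connected component of $v$ and $\mu(w)=-(m+1)$ for $w$ in any other component; this is an integer function satisfying \eqref{eq:mu}, and $W_v^{(m)}=X_v^{(m)}$. Couple $W^{(m)}$ and $\bar W^{(m)}$ as in \lemref{majorization}, so that with probability $1$ the vector $\bar W^{(m)}$ majorizes $W^{(m)}$; in particular $\max_w \bar W_w^{(m)}\ge\max_w W_w^{(m)}\ge W_v^{(m)}=X_v^{(m)}$. On the event $\{X_v^{(m)}\ge\ell\}$ the last quantity is $\ge\ell\ge 1$, so a maximiser $w^\star$ of $\bar W^{(m)}$ cannot lie in a component other than that of $v$ (there $\bar W_w^{(m)}\le m-(m+1)<1$); hence $\bar X_{w^\star}^{(m)}=\bar W_{w^\star}^{(m)}+d_G(v,w^\star)\ge\ell+d_G(v,w^\star)$. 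Thus, off a null event, $\{X_v^{(m)}\ge\ell\}\subseteq\bigcup_{w\in V}\{\bar X_w^{(m)}\ge\ell+d_G(v,w)\}$, and since the coupling has the correct marginals for both processes, the second inequality follows by taking probabilities.

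For the lower bound I would instead take $\mu(w)=\min\{d_G(v,w),\ell\}$, which is an integer function satisfying \eqref{eq:mu} even for disconnected $G$ (as $\min\{\infty,\ell\}=\ell$), and again $W_v^{(m)}=X_v^{(m)}$. Couple $W^{(m)}$ and $\bar W^{(m)}$ via \lemref{majorization}; since $\sum_w W_w^{(m)}=\sum_w\bar W_w^{(m)}$, the majorization of $\bar W^{(m)}$ over $W^{(m)}$ yields, at $\kappa=n-1$, that $\min_w\bar W_w^{(m)}\le\min_w W_w^{(m)}$ with probability $1$. Now put $\mathcal E:=\bigcap_{w\in B_v^{\ell-1}}\{\bar X_w^{(m)}\ge\ell-d_G(v,w)\}$ and suppose, for contradiction, that $\mathcal E$ holds but $X_v^{(m)}\le\ell-1$. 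On $\mathcal E$ every coordinate of $\bar W^{(m)}$ is at least $\ell$: for $w\in B_v^{\ell-1}$ this is $\bar W_w^{(m)}=\bar X_w^{(m)}+d_G(v,w)\ge\ell$ (here $d_G(v,w)\le\ell-1$, so $\mu(w)=d_G(v,w)$), and for $w\notin B_v^{\ell-1}$ it is $\bar W_w^{(m)}=\bar X_w^{(m)}+\ell\ge\ell$. On the other hand, $X_v^{(m)}\le\ell-1$ makes $v$ a minimiser of $W^{(m)}$: for $w$ with $d_G(v,w)\le\ell$, applying \lemref{balanced} along a shortest $v$--$w$ path gives $X_w^{(m)}\ge X_v^{(m)}-d_G(v,w)$, hence $W_w^{(m)}=X_w^{(m)}+d_G(v,w)\ge X_v^{(m)}=W_v^{(m)}$; and for $w$ with $d_G(v,w)>\ell$ (including $w$ in other components) we have $W_w^{(m)}=X_w^{(m)}+\ell\ge\ell>\ell-1\ge X_v^{(m)}=W_v^{(m)}$. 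Consequently $\min_w\bar W_w^{(m)}\le\min_w W_w^{(m)}=W_v^{(m)}=X_v^{(m)}\le\ell-1$, contradicting $\min_w\bar W_w^{(m)}\ge\ell$. Therefore $\mathcal E$ forces $X_v^{(m)}\ge\ell$ off a null event, and taking probabilities gives the first inequality.

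The step that needs the most care — and the main obstacle — is bridging the gap between what \lemref{majorization} delivers (control of the ordered prefix sums of the two weight vectors) and what we want (information about the single coordinate $v$): this works only because the two choices of $\mu$ place $v$ at a maximiser of $W^{(m)}$ (upper bound) or, on the relevant event $\{X_v^{(m)}\le\ell-1\}$, at a minimiser of $W^{(m)}$ (lower bound), so that the $\kappa=1$, respectively $\kappa=n-1$, instance of majorization is exactly what is needed; verifying this extremality is where \lemref{balanced} enters. The remaining points — the handling of several components and the degenerate range $\ell>m$, where both sides vanish — are routine.
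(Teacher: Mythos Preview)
Your proof is correct and follows essentially the same route as the paper: choose $\mu$ to be (plus or minus) the graph distance from $v$, apply \lemref{majorization}, and read off the relevant extreme ($\kappa=1$ or $\kappa=n-1$) of the majorization. Your truncations $\mu(w)=\min\{d_G(v,w),\ell\}$ and $\mu(w)=-(m+1)$ on foreign components are a tidy way to keep $\mu$ integer-valued when $G$ is disconnected; note, however, that the smoothness verifications showing $v$ is extremal in $W^{(m)}$ are not actually needed---the trivial inequalities $W_v^{(m)}\le\max_w W_w^{(m)}$ and $W_v^{(m)}\ge\min_w W_w^{(m)}$ already suffice.
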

\begin{proof}
   For the first inequality, set $\mu(w)=d_G(v,w)$ for all $w\in V$.
   Let $\mathcal{A}^{(m)}$ be the event that all vertices have weight at least $\ell$ after $m$ balls are allocated, and
   let $\bar{\mathcal{A}}^{(m)}$ be the same event for the $1$-choice process.
   In symbols $\mathcal{A}^{(m)}=\{\min_{u\in V}W_u^{(m)}\geq \ell\}$
   and $\bar{\mathcal{A}}^{(m)}=\{\min_{u\in V}\bar W_u^{(m)}\geq \ell\}$.
   By Lemma~\ref{lem:majorization}, we have that $\Pro{\mathcal{A}^{(m)}} \geq \Pro{\bar {\mathcal{A}}^{(m)}}$.
   Clearly, we have $\mathcal{A}^{(m)} \subseteq \{X_v^{(m)} \geq \ell\}$, but the two events are in fact equal since, by the smoothness of the load
   vector (cf.\ Lemma~\ref{lem:balanced}), $\{X_v^{(m)} \geq \ell\}$ implies $\mathcal{A}^{(m)}$.
   The proof is then complete since
   $\bar{\mathcal{A}}^{(m)}=\bigcap\nolimits_{w\in B_v^{\ell-1}} \Big\{ \bar X_{w}^{(m)} \geq \ell-d_G(v,w)\Big\}$.

   For the second inequality, set $\mu(w)=-d_G(v,w)$ for all $w\in V$.
   Then define $\mathcal{B}^{(m)}$ to be the event that there exists at least one vertex with weight at least $\ell$ after $m$ balls are allocated, and
   let $\bar{\mathcal{B}}^{(m)}$ be the corresponding event for the $1$-choice process.
   Thus, $\mathcal{B}^{(m)}=\{\max_{u\in V}W_u^{(m)}\geq \ell\}$
   and $\bar{\mathcal{B}}^{(m)}=\{\max_{u\in V}\bar W_u^{(m)}\geq \ell\}$.
   Similarly as for the event $\mathcal{A}^{(m)}$, we have that the events $\{X_v^{(m)} \geq \ell\}$ and $\mathcal{B}^{(m)}$ are identical.
   Applying Lemma~\ref{lem:majorization} we obtain that
   $\Pro{\mathcal{B}^{(m)}} \leq \Pro{\bar {\mathcal{B}}^{(m)}}=\Pro{\bigcup\nolimits_{w\in V} \Big\{ \bar X_{w}^{(m)} \geq \ell+d_G(v,w)\Big\}}$.
\end{proof}
\begin{rem}{\rm
   The lemma above states that one can \emph{couple} $\{X_v^{(m)}\}_{v\in V}$ and $\{\bar X_v^{(m)}\}_{v\in V}$ so that if
   $\bar X_{w}^{(m)} \geq \ell-d_G(v,w)$ for all $w\in B_v^{\ell-1}$, then $X_v^{(m)} \geq \ell$.
   However, this is not necessarily achieved with the ``trivial''
   coupling where each ball is born at the same vertex for both processes $\{X_v^{(m)}\}_{v\in V}$ and $\{\bar X_v^{(m)}\}_{v\in V}$.
   In other words, knowing that the number of balls born at vertex $w$ is at least $\ell-d_G(v,w)$ for all $w\in B_v^\ell$ does \emph{not}
   imply that $X_v^{(m)} \geq \ell$.
}\end{rem}

Now we extend the proof of Lemma~\ref{lem:pyramid} to derive an upper bound on the load of a subset of vertices.
\begin{pro}\label{pro:upperbound}
   Let $S\subset V$ be fixed and $\Delta$ be the maximum degree in $G$.
   Then, for all $m\geq n$ and $\ell \geq \frac{300\Delta m}{n}$ we have
   $$
      \Pro{\sum_{v\in S} X_v^{(m)} \geq \ell|S|} \leq 4\exp\left(-\frac{|S|\ell}{14} \log\left(\frac{\ell n}{m}\right) \right)+\exp\left(-\frac{m}{4}\right).
   $$
   The above inequality implies that, for any given $u\in V$,
   $$
      \Pro{X_u^{(m)} \geq 2\ell} \leq 4\exp\left(-\frac{|B_u^\ell|\ell}{14} \log\left(\frac{\ell n}{m}\right) \right)+\exp\left(-\frac{m}{4}\right).
   $$
\end{pro}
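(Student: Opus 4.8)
The plan is to dominate $\sum_{v\in S}X_v^{(m)}$ by a functional of the one‑choice loads via the weighted‑majorization Lemma~\ref{lem:majorization}, rewrite that functional as a sum of (after Poissonization) independent contributions, bound the resulting large deviation by a Chernoff argument with exponent calibrated to $\log(\ell n/m)$, and finally deduce the $X_u^{(m)}\ge 2\ell$ statement from smoothness.

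First I would set $k=|S|$ and apply Lemma~\ref{lem:majorization} with $\mu(w)=-d_G(w,S)$; this is integer valued and satisfies~\eqref{eq:mu} because $d_G(\cdot,S)$ is $1$-Lipschitz on $G$. Under the coupling it provides, $\bar W^{(m)}$ majorizes $W^{(m)}$ with probability $1$, so the sum of the $k$ largest entries of $\bar W^{(m)}$ is at least that of $W^{(m)}$, which is at least $\sum_{w\in S}W_w^{(m)}=\sum_{w\in S}X_w^{(m)}$ (as $d_G(w,S)=0$ on $S$). Hence, almost surely,
\[
   \sum_{v\in S}X_v^{(m)}\ \le\ \max_{|T|=k}\ \sum_{w\in T}\bigl(\bar X_w^{(m)}-d_G(w,S)\bigr).
\]
Next I would invoke the elementary inequality $\max_{|T|=k}\sum_{w\in T}f(w)\le k\theta+\sum_w(f(w)-\theta)^{+}$, valid for all reals $f(w)$ and all $\theta\ge0$, with $\theta=\ell/2$: on the event $\{\sum_{v\in S}X_v^{(m)}\ge\ell k\}$ this forces $Z:=\sum_{w\in V}\bigl(\bar X_w^{(m)}-d_G(w,S)-\tfrac\ell2\bigr)^{+}\ge\tfrac{\ell k}{2}$, so it suffices to bound $\Pro{Z\ge \ell k/2}$. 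Passing to the Poissonized one‑choice process makes the $\bar X_w^{(m)}$ i.i.d.\ $\mathrm{Poisson}(m/n)$; the additive $\exp(-m/4)$ in the statement is there to cover an auxiliary event (the Poissonized process producing more than $2m$ balls) and the $O(\sqrt m)$ loss when translating back to fixed $m$ via Lemma~\ref{lem:poisson}.

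For the large deviation, since $Z=\sum_w g_{d_G(w,S)}(\bar X_w^{(m)})$ with $g_j(x)=(x-j-\ell/2)^{+}$, a Chernoff bound gives, for every $\theta>0$, $\Pro{Z\ge \ell k/2}\le e^{-\theta\ell k/2}\prod_w\mathbf E\bigl[e^{\theta g_{d_G(w,S)}(\bar X_w^{(m)})}\bigr]$; I would take $\theta=\tfrac12\log(\ell n/m)$. Because $j+\ell/2\ge\ell/2$ sits far above the Poisson mean $m/n$ — here one uses the hypothesis $\ell\ge300\Delta m/n$, i.e.\ $\ell n/m\ge300\Delta$ — a routine Poisson‑tail estimate bounds each factor by $1+e^{-2\ell}\rho^{\,j}$ with $\rho=\tfrac{2e}{300\Delta}$. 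Grouping vertices by their distance $j$ to $S$ and using $|\{w:d_G(w,S)=j\}|\le\Delta^{j}|S|$ for bounded degree, the product is at most $\exp\!\bigl(e^{-2\ell}|S|\sum_{j\ge0}(\Delta\rho)^{j}\bigr)=\exp(O(|S|e^{-2\ell}))$, the geometric series converging precisely because $\Delta\rho=\tfrac{2e}{300}<1$ (this is why the constant $300$ appears). Thus $\Pro{Z\ge \ell k/2}\le\exp\!\bigl(-\tfrac{\ell k}{4}\log\tfrac{\ell n}{m}+O(|S|e^{-2\ell})\bigr)$, and the gap between the exponents $\tfrac14$ and $\tfrac1{14}$ absorbs the error term and the Poissonization factor, yielding the first displayed bound. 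The genuinely delicate point is this last step: the distance shells around $S$ can grow like $\Delta^{j}$, and one must choose $\theta$ of order $\log(\ell n/m)$ so that the faster‑than‑geometric decay of the Poisson tails beats that growth uniformly over all bounded‑degree graphs; by comparison the majorization and top‑$k$ reductions are routine.

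Finally, for the second assertion: if $X_u^{(m)}\ge2\ell$, then Lemma~\ref{lem:balanced} gives $X_v^{(m)}\ge X_u^{(m)}-d_G(u,v)\ge\ell$ for every $v\in B_u^{\ell}$, so $\sum_{v\in B_u^{\ell}}X_v^{(m)}\ge\ell\,|B_u^{\ell}|$; applying the first assertion with $S=B_u^{\ell}$ then gives $\Pro{X_u^{(m)}\ge2\ell}\le 4\exp\!\bigl(-\tfrac{|B_u^{\ell}|\ell}{14}\log\tfrac{\ell n}{m}\bigr)+\exp(-m/4)$, as claimed.
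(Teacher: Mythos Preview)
Your approach matches the paper's skeleton exactly: the weight $\mu(w)=-d_G(w,S)$, majorization via Lemma~\ref{lem:majorization} to the top-$|S|$ sum of one-choice weights, Poissonization, the $\Delta^{j}$-versus-$(Cm/(n\ell))^{j}$ shell competition (won precisely because $\ell\ge300\Delta m/n$), and the second assertion via smoothness with $S=B_u^{\ell}$. The one real difference is how the tail of the top-$|S|$ sum is controlled. The paper proceeds in two stages: it first shows that at most $8|S|$ vertices reach weight $\ell/16$ (bounding the number of ``high'' vertices by a Poisson of small mean, shell by shell), and then bounds the total excess above $\ell/16$ of those vertices by a single Poisson variable. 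You instead pass to $Z=\sum_w(\bar W_w-\ell/2)^{+}$ and apply a one-shot Chernoff bound with $\theta=\tfrac12\log(\ell n/m)$. Both routes are correct and rest on the same geometric decay across distance shells; yours is more compact, the paper's avoids computing a moment generating function.

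One small correction to your Poissonization description: to dominate the fixed-$m$ one-choice loads you should Poissonize with mean $2m$ (not $m$), and the auxiliary event contributing $\exp(-m/4)$ is that the Poisson$(2m)$ process produces \emph{fewer} than $m$ balls (first inequality of Lemma~\ref{lem:AlonSpencer} with $\epsilon=1/2$), not more than $2m$; on its complement the Poissonized loads dominate the fixed-$m$ ones by coupling, so no appeal to Lemma~\ref{lem:poisson} is needed. With that fix your sketch goes through; the Poisson rate becomes $2m/n$ rather than $m/n$, which only shifts the constants and is absorbed by the slack between the exponents $1/4$ and $1/14$.
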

\begin{proof}
   For any $v\in V$, define
   $\mu(v) = -d_G(v,S)$ and (cf.~\eqref{eq:phase}) $W_v^{(m)} = X_v^{(m)}+\mu(v)$.
   Let $K_S^{(m)}$ be the sum of the weights of the $|S|$ vertices with largest weights after $m$ balls are allocated, and $\bar K_S^{(m)}$ be the
   corresponding value for the 1-choice process.
   Then,
   $$
      \sum_{v\in S} X_v^{(m)}
      = \sum_{v\in S} W_v^{(m)}
      \leq K_S^{(m)}
      \leq \bar K_S^{(m)},
   $$
   where the last step follows by majorization (cf.\ Lemma~\ref{lem:majorization}).
   Let $\hat W_v^{(k)}$ be the weight of vertex $v$ for the Poissonized version of the $1$-choice process with expected number of balls equal to $k$, and
   $\hat K_S^{(k)}$ be the sum of the weights of the $|S|$ vertices with largest weight for this Poissonized version.
   If the Poissonized version with $k=2m$ allocates at least $m$ balls, then we can couple the allocations of the first $m$ balls with the allocation in the non-Poissonized version of the $1$-choice process, and it holds that $\hat K_S^{(2m)} \geq \bar K_S^{(m)}$. Hence
   by the first statement of Lemma~\ref{lem:AlonSpencer} we have that
   \begin{equation}
      \Pro{\hat K_S^{(2m)} \geq \bar K_S^{(m)}}
      \geq 1-\exp\left(-\frac{m}{4}\right).
      \label{eq:coupling}
   \end{equation}
   From now on, we consider only the Poissonized version.
   Let $\tilde K^{(2m)}$ be the sum of the weights of the vertices with weight at least $\ell/16$. More formally,
   $\tilde K^{(2m)} = \sum_{v \in V \colon \hat W_v^{(2m)}\geq \ell/16} \hat W_v^{(2m)}$.
   Then, we have that, on the event $\hat K_S^{(2m)} \geq \bar K_S^{(m)}$,
   $$
      \sum_{v\in S} X_v^{(m)}
      \leq \hat K_S^{(2m)}
      \leq \frac{\ell}{16}|S| + \tilde K^{(2m)}.
   $$

   We can estimate the weight of vertices that reach weight $\ell/16$ as follows.
   For each vertex, let balls arrive according to a rate-1 Poisson point process up to time $2m/n$ or until the vertex reaches weight $\ell/16$, whatever happens first.
   Then, if the vertex reaches weight $\ell/16$,
   continue adding balls for an additional time interval of length $2m/n$. This construction stochastically dominates the
   weight of the vertices by the memoryless property of Poisson random variables. The probability that a vertex $v$ with $\mu(v)=-k$
   reaches weight $\ell/16$ is
   $$
      \sum_{x=\ell/16+k}^\infty \frac{\ce^{-2m/n}(2m/n)^x}{x!}
      \leq \sum_{x=\ell/16+k}^\infty \left(\frac{2m \ce }{n x}\right)^{x}
      \leq 2 \left(\frac{2m \ce}{n(\ell/16+k)}\right)^{\ell/16+k},
   $$
   since $\frac{2m \ce}{n(\ell/16+k)} \leq \frac{1}{2}$ for all $k\geq 0$ and $x! \geq (x/\ce)^x$ for any integer $x$.
   Now any Bernoulli random variable with mean $p\leq 1/2$ is stochastically dominated by a Poisson random variable with mean $2p$, which follows from the fact that $\ce^{-2p} \leq 1-p$ for $0 \leq p \leq 1/2$.
   Using this, and denoting by $N_S^k$ the set of vertices at distance $k$ from $S$, we have that the
   number of vertices reaching weight $\ell/16$ is a Poisson random variable of mean
   \begin{align*}
      \sum_{k\geq 0}|N_S^{k}|4 \left(\frac{2m \ce}{n(\ell/16+k)}\right)^{\ell/16+k}
      &\leq 4 |S| \left(\frac{32 m \ce}{n\ell}\right)^{\ell/16} \sum_{k\geq 0} \Delta^k\left(\frac{2m \ce}{n(\ell/16+k)}\right)^{k}\\
      &\leq 8 |S| \left(\frac{32 m \ce}{n \ell}\right)^{\ell/16},
   \end{align*}
   for large enough $\ell\geq 300\Delta m/n$.
   Then the probability that the number of vertices reaching weight $\ell/16$ is
   larger than $8|S|$ is at most
   \begin{align*}
      \sum_{k\geq 8|S|} \left(\frac{8\ce |S| \left(\frac{32 m \ce}{n \ell}\right)^{\ell/16}}{k}\right)^k
      &\leq 2\left(\frac{8\ce |S| \left(\frac{32 m \ce}{n \ell}\right)^{\ell/16}}{8|S|}\right)^{8|S|}\\
      &= 2\exp\left(-8|S| \left(\frac{\ell}{16}\log\left(\frac{\ell n}{32 m \ce}\right)-1\right)\right),
   \end{align*}
   since $\frac{8\ce |S| \left(\frac{32 m \ce}{n\ell}\right)^{\ell/16}}{8|S|}=\ce \left(\frac{32m \ce}{n\ell}\right)^{\ell/16}\leq\frac{1}{2}$.
   Using that $\ell\geq 300\Delta m/n$, we have
   $$
      \frac{\ell}{16}\log\left(\frac{\ell n}{32 m e}\right)-1
      \geq \frac{\ell}{32}\log\left(\frac{\ell n}{m}\right)-1
      \geq \frac{\ell}{96}\log\left(\frac{\ell n}{m}\right).
   $$
   Putting the last two equations together, we obtain that
   \begin{equation}
      \Pro{\text{more than $8|S|$ vertices reach weight $\frac{\ell}{16}$}}
      \leq 2\exp\left(-\frac{\ell |S|}{12}\log\left(\frac{\ell n}{m}\right)\right).
      \label{eq:part1}
   \end{equation}
   If the event above occurs, then $\tilde K_S^{(2m)}$ is stochastically dominated by
   $8|S| \cdot \frac{\ell}{16}=\frac{\ell|S|}{2}$ plus a Poisson random variable of mean $8|S| \cdot \frac{2m}{n} = 16 |S| \frac{m}{n}$, which is larger than $\frac{7\ell|S|}{16}$ with probability at most
   \begin{align}
      \sum_{k=\frac{7\ell|S|}{16}}^\infty \left(\frac{16\ce|S|m}{nk}\right)^k
      &\leq 2 \left(\frac{16\cdot 16 \ce|S|m}{7n\ell|S|}\right)^{\frac{7\ell|S|}{16}}\nonumber\\
      &=2\exp\left(-\frac{7\ell|S|}{16} \log\left(\frac{7 \ell n}{256\ce m}\right) \right)
      \leq 2\exp\left(-\frac{7\ell|S|}{96} \log\left(\frac{\ell n}{m}\right) \right).
      \label{eq:part2}
   \end{align}
   Therefore, by summing the right-hand sides of~\eqref{eq:part1} and~\eqref{eq:part2},
   with probability at least $1-4\exp\left(-\frac{\ell|S|}{14} \log\left(\frac{\ell n}{m}\right) \right)$, we have
   $\tilde K_S^{(2m)} \leq \frac{\ell|S|}{2}+\frac{7\ell|S|}{16}\leq\frac{15\ell|S|}{16}$.
   This and the fact that $\hat K_S^{(2m)} \leq \frac{\ell|S|}{16}+\tilde K_S^{(2m)}$, together with~\eqref{eq:coupling},
   establish the first part of the lemma.

   The second part of Proposition~\ref{pro:upperbound} holds by setting $S=B_u^\ell$. If $u$ has load $k>\ell$, then the total number of
   balls allocated to $B_u^\ell$ is at least
   $$
      \sum_{i=0}^\ell (k-i)|N_u^i|
      \geq (k-\ell)|B_u^\ell|.
   $$
   Then setting $k=2\ell$ and applying the first part of the proposition yields the result.
\end{proof}

\section{Maximum Load}\label{sec:maxload}
We start stating a stronger version of Theorem~\ref{thm:maxload} which also holds for non-transitive graphs.
For $\gamma\in(0,1/2]$, let
\begin{align*}
   R_1^{(\gamma)} = R_1^{(\gamma)}(G) = \max\big\{r\in\mathbb{N} \colon & \text{there exists } S\subseteq V \text{ with } |S|\geq n^{\frac{1}{2}+\gamma} \\
   & \text{ such that } r |B_u^{r}|\log r < \log n \text{ for all $u\in S$}\big\}.
\end{align*}
Note that $R_1^{(\gamma)}$ is non-increasing with $\gamma$. Also, when $G$ is vertex transitive, we have $R_1=R_1^{(\gamma)}+1$ for all $\gamma\in(0,1/2]$, because
in this case, for any given $r$, the size of $B_u^r$ is the same for all $u\in V$.
The theorem below establishes that, for any bounded-degree graph, if there exists a $\gamma\in(0,1/2]$ for which $R_1^{(\gamma)}=\Theta(R_1)$, then
the maximum load when $m=n$ is $\Theta(R_1)$.

\begin{thm}[General version of Theorem~\ref{thm:maxload}]\label{thm:maxload2}
   Let $G$ be any graph with bounded degrees.
   For any constants $\gamma \in (0,1/2]$ and $\alpha\geq 1$, we have
   $$
      \Pro{X_{\max}^{(n)} < \frac{\gamma R_1^{(\gamma)}}{4} } \leq n^{-\omega(1)}
      \quad\text{ and }\quad
      \Pro{X_{\max}^{(n)} \geq 56\alpha R_1} \leq 5n^{-\alpha}.
   $$
\end{thm}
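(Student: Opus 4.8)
I treat the two bounds separately; the upper bound is a short consequence of Proposition~\ref{pro:upperbound} and a union bound, whereas the lower bound combines the pyramid estimate of Lemma~\ref{lem:pyramid} with a bounded-differences concentration based on the Lipschitz property (Lemma~\ref{lem:lipschitz}). \emph{Upper bound.} Put $\ell := 28\alpha R_1$ and assume first that $n$ is large enough that $\ell \ge 300\Delta$ (this holds for all but finitely many $n$, since $R_1\to\infty$ for bounded-degree graphs), so the second statement of Proposition~\ref{pro:upperbound} applies with $m=n$. Since $\ell \ge R_1$ we have $|B_u^\ell|\ge|B_u^{R_1}|$ and $\log\ell\ge\log R_1$, so by the definition of $R_1$,
\[
   |B_u^\ell|\cdot\ell\cdot\log\ell \;\ge\; 28\alpha\bigl(|B_u^{R_1}|\cdot R_1\cdot\log R_1\bigr) \;\ge\; 28\alpha\log n ;
\]
substituting $2\ell = 56\alpha R_1$ into Proposition~\ref{pro:upperbound} gives $\Pro{X_u^{(n)} \ge 56\alpha R_1} \le 4n^{-2\alpha}+\exp(-n/4)$ for each fixed $u$, and summing over the $n$ vertices (using $\alpha\ge1$) yields $\Pro{X_{\max}^{(n)} \ge 56\alpha R_1} \le 4n^{1-2\alpha}+n\exp(-n/4)\le 5n^{-\alpha}$. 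The remaining bounded range of $n$ (where $R_1$ is still a constant) is handled by a separate routine estimate, which I omit here.

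\emph{Lower bound, per-vertex step.} Set $r := R_1^{(\gamma)}$ and $\ell := \lceil\gamma r/4\rceil$, so that $\{X_{\max}^{(n)} < \gamma R_1^{(\gamma)}/4\} = \{X_{\max}^{(n)} < \ell\}$. If $\ell\le1$ the event is empty since $X_{\max}^{(n)}\ge1$ always, so assume $\ell\ge2$; then $\gamma r/4>1$, hence $r>4/\gamma\ge8$ and $\ell\le\gamma r/2\le r/4$. Let $S\subseteq V$ with $|S|\ge n^{1/2+\gamma}$ and $r|B_u^r|\log r<\log n$ for all $u\in S$, as supplied by the definition of $R_1^{(\gamma)}$; in particular $|B_u^{\ell-1}|\le|B_u^r|<\frac{\log n}{r\log r}$ for $u\in S$. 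I claim $\Pro{X_v^{(n)}\ge\ell}\ge\tfrac12 n^{-(5/8)\gamma}$ for every $v\in S$. By the first inequality of Lemma~\ref{lem:pyramid}, $\Pro{X_v^{(n)}\ge\ell}$ is at least the probability that $\bar X_w^{(n)}\ge\ell-d_G(v,w)$ for every $w\in B_v^{\ell-1}$ in the $1$-choice process; passing to a Poissonized $1$-choice process of total mean $n/2$ (whose bin loads are independent $\mathrm{Poisson}(1/2)$ variables, and which is stochastically dominated by the non-Poissonized process with $n$ balls outside an event of probability $\exp(-\Omega(n))$; cf.\ Lemma~\ref{lem:poisson}) this probability is at least $\prod_{w\in B_v^{\ell-1}}\Pro{\mathrm{Poi}(1/2)\ge\ell-d_G(v,w)}$ up to an additive $\exp(-\Omega(n))$. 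Using $\Pro{\mathrm{Poi}(1/2)\ge s}\ge\Pro{\mathrm{Poi}(1/2)=s}\ge\exp(-\tfrac12-s\log(2s))$ and writing $N_v^j$ for the set of vertices at distance exactly $j$ from $v$, the negative logarithm of the product is at most
\[
   \sum_{j=0}^{\ell-1}|N_v^j|\Bigl(\tfrac12+(\ell-j)\log\bigl(2(\ell-j)\bigr)\Bigr) \;\le\; \Bigl(\tfrac12+\ell\log(2\ell)\Bigr)|B_v^{\ell-1}| \;<\; \left(\frac{1}{2r\log r}+\frac{\ell\log(2\ell)}{r\log r}\right)\log n .
\]
Since $2\ell\le\gamma r\le r$ gives $\ell\log(2\ell)\le\tfrac\gamma2 r\log r$, and $r>4/\gamma$ gives $r\log r>4/\gamma$, the parenthesized quantity is at most $\tfrac\gamma8+\tfrac\gamma2=\tfrac58\gamma$, which gives the claim.

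\emph{Lower bound, concentration step.} Let $Z := |\{v\in S : X_v^{(n)}\ge\ell\}|$, so by linearity $\mathbf{E}[Z]\ge\tfrac12|S|\,n^{-(5/8)\gamma}\ge\tfrac12 n^{1/2+(3/8)\gamma}$. View the allocation as a function of the $2n$ independent coordinates $(U_1,\zeta_1,\dots,U_n,\zeta_n)$, where $U_i$ is the birthplace of ball $i$ and $\zeta_i$ packages all of ball $i$'s tie-breaking choices. By Lemma~\ref{lem:lipschitz} — whose proof applies verbatim with a change of the variable $\zeta_i$ in place of a change of a birthplace — altering any one of these coordinates changes every entry of the load vector, and hence $Z$, by at most $2$. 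McDiarmid's bounded-differences inequality then gives
\[
   \Pro{Z\le \mathbf{E}[Z]/2} \;\le\; \exp\!\left(-\frac{(\mathbf{E}[Z])^2}{O(n)}\right) \;\le\; \exp\bigl(-\Omega(n^{(3/4)\gamma})\bigr) \;=\; n^{-\omega(1)} ,
\]
and since $\{Z\ge1\}\subseteq\{X_{\max}^{(n)}\ge\ell\}$ we conclude $\Pro{X_{\max}^{(n)}<\ell}\le\Pro{Z=0}\le n^{-\omega(1)}$.

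\emph{Main obstacle.} The delicate point is the per-vertex estimate $\Pro{X_v^{(n)}\ge\ell}\ge n^{-(5/8)\gamma}/2$: the concentration step produces an $n^{-\omega(1)}$ bound only because this probability is polynomially larger than $n^{-1/2}$ (recall $|S|$ is only $n^{1/2+\gamma}$), i.e.\ because the pyramid costs an exponent that is a small multiple of $\gamma\log n$ rather than the $\Theta(\log n)$ a full pyramid of radius $r$ would cost. This is exactly where the factor $1/4$ in the target load $\gamma R_1^{(\gamma)}/4$ is spent: the defining inequality of $R_1^{(\gamma)}$ controls $|B_u^r|$ at the \emph{large} radius $r$, whereas the pyramid we build has radius only $\ell-1$, so it consumes merely a $\ell\log\ell/(r\log r)\le\gamma/2$ fraction of that budget, and $r>4/\gamma$ absorbs the residual $1/(r\log r)$ term. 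Everything else — the Poissonization needed to make the bin loads independent, its de-Poissonization by domination, and the bounded-differences concentration built on Lemma~\ref{lem:lipschitz} — is routine given those lemmas, the one mild subtlety being that the tie-breaking randomness must be bundled into $O(n)$ coordinates (one per ball) so that McDiarmid's variance term stays $O(n)$.
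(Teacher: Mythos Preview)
Your proof is correct and follows the same strategy as the paper: the upper bound via Proposition~\ref{pro:upperbound} and a union bound is identical, and the lower bound combines the per-vertex pyramid estimate (Lemma~\ref{lem:pyramid}) with McDiarmid concentration built on Lemma~\ref{lem:lipschitz}. The only difference is bookkeeping --- the paper Poissonizes the local search process itself (so the comparison loads are independent $\mathrm{Poisson}(1)$) and de-Poissonizes at the end via Lemma~\ref{lem:poisson}, whereas you stay non-Poissonized throughout, Poissonize only the $1$-choice side with mean $n/2$ to obtain independence, and treat each ball's tie-breaking as an explicit McDiarmid coordinate so that concentration runs over exactly $2n$ variables.
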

\begin{proof}
   We start establishing a lower bound for $X_{\max}^{(n)}$.
   Let $A$ be a Poisson random variable with mean $1$.
   We first consider the Poissonized versions of the local search allocation and the $1$-choice process (recall the definition of these variants from the paragraph preceding Lemma~\ref{lem:poisson}).
   For any $v\in V$ and any $\ell>0$, Lemma~\ref{lem:pyramid} gives that
   $$
      \Pro{X_v^{(n)} \geq \ell}
      \geq \prod_{r=0}^{\ell-1}\left(\Pro{A \geq \ell-r}\right)^{|N_v^r|}
      \geq \prod_{r=0}^{\ell-1}\left(e^{-1} (\ell-r)^{-\ell+r}\right)^{|N_v^r|},
   $$
   where $N_v^r$ is the set of vertices at distance $r$ from $v$. Since $B_v^\ell=\bigcup_{r=0}^\ell N_v^r$,
   $$
      \Pro{X_v^{(n)} \geq \ell}
      \geq \exp\left(-|B_v^{\ell}| -\ell|B_v^{\ell}|\log(\ell)\right)
      \geq \exp\left(-2\ell|B_v^{\ell}|\log(\ell)\right),
   $$
   where the last step follows for all $\ell\geq2$.
   Given $\gamma>0$, set $\ell=\frac{\gamma R_1^{(\gamma)}}{4}$. Hence, since $|B_v^{r}|\log r$ is increasing with $r$, we have that there exists a
   set $S$ with $|S|= \lceil n^{\frac{1}{2}+\gamma}\rceil$ such that
   \begin{equation}
      \Pro{X_v^{(n)} \geq \frac{\gamma R_1^{(\gamma)}}{4}}
      \geq \exp\left(-\frac{\gamma R_1^{(\gamma)}|B_v^{R_1^{(\gamma)}}|\log(R_1^{(\gamma)})}{2}\right)
      \geq n^{-\gamma/2}
      \quad\text{ for all $v\in S$}.
      \label{eq:sets}
   \end{equation}
   Let $Y=Y(\gamma)$ be the random variable defined as the number of vertices $v$ satisfying $X_{v}^{(n)} \geq \frac{\gamma R_1^{(\gamma)}}{4}$.
   Let $K$ be the total number of balls allocated in the Poissonized version of the local search allocation. Note that $\Ex{K}=n$ and by the last statement of \lemref{AlonSpencer}, $\Pro{ K > 2en } \leq 2^{1-2ne}$.
   Regard $Y$ as a function of the $K$ independently chosen birthplaces
   $U_1,U_2,\ldots,U_K$. Then, for any given $K$, $Y$ is $1$-Lipschitz by \lemref{lipschitz}, and~\eqref{eq:sets} implies that
   $$
      \Ex{Y \mid K\leq 2\ce n}
      \geq n^{\frac{1}{2}+\gamma} \cdot \left(\frac{n^{-\gamma/2}-\Pro{K>2\ce n}}{\Pro{K\leq 2\ce n}}\right)
      \geq \frac{n^{\frac{1}{2}+\frac{\gamma}{2}}}{2}.
   $$
   With this, we apply \lemref{mobd} to obtain
   \begin{align*}
     &\Pro{ X_{\max}^{(n)} < \frac{\gamma R_1^{(\gamma)}}{4}}\\
     &\leq \Pro{ |Y - \Ex{Y \mid K\leq 2\ce n}| \geq \frac{1}{2} \Ex{Y \mid K\leq 2\ce n} \, \Big| \, K \leq 2\ce n } + \Pro{ K > 2 \ce n} \\
     &\leq n^{-\omega(1)} + 2^{1-2n\ce} = n^{-\omega(1)}.
   \end{align*}
   This result can then be translated to the non-Poissonized model via Lemma~\ref{lem:poisson}.

   Now we establish the upper bound, where we consider the non-Poissonized process. For any fixed $u\in V$, we have from the second part of Proposition~\ref{pro:upperbound} (with $m=n$) that
   \begin{align*}
      \Pro{X_u^{(n)} \geq 56 \alpha R_1} &\leq 4\exp\left(-\frac{28\alpha R_1 |B_u^{28 \alpha R_1}| }{14} \log (28\alpha R_1)\right) + \exp\left(- \frac{n}{4} \right) \\
      &\leq 4\exp\left(-2 \alpha  R_1 |B_u^{R_1}|\log R_1\right) + \exp\left(- \frac{n}{4} \right)
      \leq 5 n^{-2 \alpha }.
   \end{align*}
   Taking the union bound over $u$ we obtain that
   $$
     \Pro{X_{\max}^{(n)} \geq 56\alpha R_1} \leq 5 n^{-2 \alpha+1}  \leq 5 n^{-\alpha}.
   $$
\end{proof}

\begin{proof}[{\bf Proof of Theorem~\ref{thm:highdensity}}]
   Applying Proposition~\ref{pro:upperbound} with $\ell=\left(\frac{m}{n}+R_2\right)c$ for any constant $c\geq300\Delta$, we obtain
   \begin{align*}
      &\Pro{\sum_{u\in B_u^{R_2}}X_u^{(m)} \geq \left(\frac{m}{n}+R_2\right)c \cdot |B_u^{R_2}|}\\
      &\leq 4\exp\left(-\left(\frac{m}{n}+R_2\right)\frac{c|B_u^{R_2}|}{14} \log c\right)+\exp\left(-\frac{m}{4}\right)\\
      &\leq 4\exp\left(-\frac{cR_2|B_u^{R_2}|}{14} \log c\right)+\exp\left(-\frac{m}{4}\right).
   \end{align*}
   By setting $c \geq 1$ sufficiently large, the right-hand side above can be made smaller than $n^{-2}$.
   If $u$ has load $k$, then the number of balls allocated to vertices in $B_u^{R_2}$ is at least
   $$
      \sum_{i=0}^{R_2} (k-i)|N_u^i| \geq (k-R_2)|B_u^{R_2}|.
   $$
   Therefore we obtain that, on the event $\sum_{u\in B_u^{R_2}}X_u^{(m)} \leq \left(\frac{m}{n}+R_2\right)c|B_u^{R_2}|$, we have
   $X_u^{(m)} \leq c\left(\frac{m}{n}+R_2\right)+R_2\leq 2c \left(\frac{m}{n}+R_2\right)$. Taking the union bound over all $u$ completes the proof.
\end{proof}

\section{Cover time}\label{sec:covertime}

The proposition below gives an upper bound for the cover time.
\begin{pro}\label{pro:blanket}
   Let $G$ be a graph with bounded degrees. Then for any $\alpha>1$ there exists a $C=C(\alpha,\Delta)>0$ such that for all
   $m\geq C R_2 n$ we have
   $$
      \Pro{X_{\min}^{(m)} < \frac{m}{224 n \log \Delta}} \leq n^{-\alpha},
   $$
   where $X_{\min}^{(m)}=\min_{v\in V} X_v^{(m)}$.
\end{pro}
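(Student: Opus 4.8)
The plan is to fix a vertex $v$, prove $\Pro{X_v^{(m)}<\ell}\le n^{-\alpha-1}$ for $\ell=\tfrac{m}{224\,n\log\Delta}$, and then take a union bound over $v$. Such a per-vertex bound can only hold if the event $\{X_v^{(m)}<\ell\}$ forces roughly $\log n$ worth of independent randomness to behave atypically, and this is precisely where the defining property of $R_2$ (namely $r|B_u^{r}|\ge\log n$ at $r=R_2$) enters. As a preliminary step I would pass to the Poissonized version (\lemref{poisson}), so that for any fixed set $S$ the number of balls born in $S$ is a Poisson random variable of mean $\tfrac mn|S|$, with independence across disjoint sets.

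Work on the event $\{X_v^{(m)}<\ell\}$ and extract two deterministic consequences from smoothness (\lemref{balanced}). First, $X_u^{(m)}<\ell+d_G(v,u)$ for all $u$, so the total load of $B_v^{r}$ is at most $(\ell+r)|B_v^{r}|$ for every radius $r$. Second, a ball born at $u$ traverses at most $X_u^{(m)}$ edges during its local search (the visited loads form a strictly decreasing sequence of nonnegative integers), hence a ball born in $B_v^{\rho}$ must end inside $B_v^{\ell+2\rho}$. Combining the two, on this event the number of balls born in $B_v^{\rho}$ is at most the total load of $B_v^{\ell+2\rho}$, i.e. at most $(2\ell+2\rho)|B_v^{\ell+2\rho}|$. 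If $\rho$ is chosen so that this is at most a quarter of the expected number of balls born in $B_v^{\rho}$, that is $(2\ell+2\rho)|B_v^{\ell+2\rho}|\le\tfrac14\cdot\tfrac mn|B_v^{\rho}|$, then a Chernoff bound for the Poisson count gives $\Pro{X_v^{(m)}<\ell}\le\exp(-c\,\tfrac mn|B_v^{\rho}|)$ for an absolute constant $c>0$.

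Everything reduces to producing a radius $\rho=\rho(v)$ with the two properties: $\tfrac mn|B_v^{\rho}|\ge c'C\log n$ (so the Chernoff bound dominates $n^{-\alpha-1}$ once $C=C(\alpha,\Delta)$ is large), and the growth-control inequality above. For the first it suffices to take $\rho\ge R_2$: then $|B_v^{\rho}|\ge|B_v^{R_2}|\ge\tfrac{\log n}{R_2}$ by the definition of $R_2$, while $\ell\ge R_2$ (because $\tfrac mn\ge CR_2$ and $C$ is large), so $\tfrac mn|B_v^{\rho}|=224\,\ell\log\Delta\,|B_v^{\rho}|\ge C\log n$. For the second, write $\tfrac mn=224\,\ell\log\Delta$: for $\rho\in[R_2,\ell]$ the target ratio $\tfrac{m/n}{8(\ell+\rho)}$ is at least $14\log\Delta$, so it is enough to find such a $\rho$ with $|B_v^{3\ell}|\le 14\log\Delta\,|B_v^{\rho}|$. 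When the balls around $v$ grow moderately (e.g. polynomially, as on grids) this holds with $\rho$ a fixed multiple of $\ell$; and when $\ell$ exceeds the eccentricity of $v$ one simply takes $\rho$ equal to that eccentricity, so that $B_v^{\ell+2\rho}$ is the whole component of $v$ and the ratio is $1$ (one checks $\diam\le\tfrac mn$ in that regime). The $\log\Delta$ slack in the definition of $\ell$ is used exactly to leave room in this step.

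The delicate part is the choice of $\rho$ when $|B_v^{r}|$ grows very fast — the regime $R_2\ll\ell\ll\diam(G)$ in which the ball size roughly multiplies by $\Delta$ at each step (a bounded-degree expander at an intermediate value of $m$). There $B_v^{\ell+2\rho}$ is exponentially larger than $B_v^{\rho}$ for every admissible $\rho$, so the bound ``$\#\{\text{balls born in }B_v^{\rho}\}\le$ load of $B_v^{\ell+2\rho}$'' is too lossy; and a majorization comparison with the $1$-choice process is no help either, since with only $\tfrac mn=\Theta(R_2)=o(\log n)$ balls per vertex the $1$-choice process itself leaves $n^{1-o(1)}$ under-covered bins, so no ``many poorly-covered bins'' event can be ruled out. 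Getting past this seems to require a finer accounting — either splitting on the growth rate of $|B_v^{r}|$ and, in the fast-growth part, using that balls born in $B_v^{\rho}$ cannot all accumulate on the outer shells of $B_v^{\ell+2\rho}$ because the load there is itself smoothed, or iterating the ball argument over a geometric sequence of radii — and I expect this fast-growth case to be the technical heart of the proof.
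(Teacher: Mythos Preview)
You have correctly located the obstruction: your ball-trapping inequality ``births in $B_v^\rho\le$ load of $B_v^{\ell+2\rho}$'' is useless whenever $|B_v^r|$ grows faster than polynomially, and no choice of $\rho$ repairs it in the expander regime. The real gap, however, is the sentence in which you dismiss the majorization route. It is true that for $m/n=\Theta(R_2)$ the $1$-choice process leaves $n^{1-o(1)}$ individual bins with load below $\ell$; but the paper does not work with individual bins. It fixes $u$, sets the weight $\mu(w)=d_G(u,w)$ so that $W_w^{(m)}=X_w^{(m)}+d_G(u,w)$, and looks at $Z:=$ the sum of the $|B_u^{R_2}|$ \emph{smallest} values of $W_w^{(m)}$. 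By \lemref{majorization} (applied at the bottom of the ordering rather than the top), $Z\ge\overline Z$, the corresponding sum for the $1$-choice process. Now $\overline Z$ is a function of $m$ i.i.d.\ birthplaces which changes by at most one when a single birthplace is altered, and --- this is the point your argument lacks --- its martingale increments have conditional variance $O(|B_u^{R_2}|/n)$, because moving a birthplace only affects $\overline Z$ when the birthplace hits one of the $|B_u^{R_2}|$ current minimizers. Azuma with variance (\lemref{azumavar}) then gives
\[
\Pro{\overline Z<\tfrac{\ell}{4}\,|B_u^{R_2}|}\le\exp\Big(-c\,\tfrac{m}{n}\,|B_u^{R_2}|\Big)\le\exp\big(-c\,C\,R_2\,|B_u^{R_2}|\big)\le n^{-cC},
\]
precisely the $R_2|B_u^{R_2}|\ge\log n$ mechanism you were aiming for. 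On the complementary event $Z\ge\tfrac{\ell}{4}|B_u^{R_2}|$, at most $|B_u^{R_2}|-1$ vertices can have $W_w^{(m)}<\ell/4$, so by pigeonhole some $v\in B_u^{R_2}$ has $X_v^{(m)}\ge\ell/4-R_2$, and smoothness pushes this to $u$.

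In short, the missing idea is to aggregate: replace ``the $1$-choice process has no under-loaded bin near $u$'' (false) by ``the sum of the $|B_u^{R_2}|$ smallest $1$-choice weights is not too small'' (true with the required probability, by a second-moment martingale bound). Your geometric births-versus-load comparison is not needed and would not close the fast-growth gap.
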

\begin{proof}
   Fix an arbitrary vertex $u \in V$.
   We will use the concept of weights defined in Section~\ref{sec:key}.
   Define $\mu(v)=d_G(u,v)$ and $W_v^{(m)}=X_v^{(m)}+\mu(v)$.
   Similarly, for the $1$-choice process, define $\overline{W}_v^{(m)}=\overline{X}_v^{(m)}+\mu(v)$.
   Let $Y:= \min_{v \in V} \overline{W}_v^{(m)}$
   be the minimum weight of all vertices in $V$ in the $1$-choice process.
   Let $\ell=\frac{m}{28n \log \Delta}$.
   We have
   \begin{align*}
     \Pro{ Y < \ell}
     &= \Pro{\bigcup_{v \in B_u^{\ell-1}} \left \{ \overline{W}_v^{(m)} < \ell \right\}}\\
     &\leq |B_u^{\ell}|\Pro{ \overline{X}_u^{(m)} < \ell} \\
     &\leq |B_u^{\ell}|\Pro{\Big| \overline{X}_u^{(m)}-\Ex{\overline{X}_u^{(m)}}\Big| > \frac{m}{n}\left(1-\frac{1}{28\log\Delta}\right)}.
   \end{align*}
    Using Lemma~\ref{lem:improvedhoeffding}, we obtain
   $$
     \Pro{ Y < \ell}
     \leq |B_u^{\ell}| \exp \left( - \frac{\frac{m^2}{n^2}\left(1-\frac{1}{28\log\Delta}\right)^2}{\frac{7m}{3n}} \right)
     \leq |B_u^{\ell}| \exp \left( - \frac{3m}{28n}\right)
     \leq \exp \left(\frac{m}{28 n} - \frac{3m}{28n} \right) \leq \frac{1}{2},
   $$
   where the last inequality holds since $m/n \geq C R_2 = \omega(1)$ for bounded degree graphs.
   Now define $\overline{Z}$ as the sum of the $|B_u^{R_2}|$ smallest values of $\left\{\overline{W}_v^{(m)} \colon v \in V \right\}$ and $Z$ as the sum of the $|B_u^{R_2}|$ smallest values of $\left\{ W_v^{(m)} \colon v \in V \right\}$. By \lemref{majorization}, we can couple $W^{(m)}$ and $\overline{W}^{(m)}$ so that, with probability $1$, $Z \geq \overline{Z}$. Further,
   \begin{align*}
     \Ex{ \overline{Z} } &\geq \frac{\ell |B_u^{R_2}|}{2}.
   \end{align*}
   We now apply \lemref{azumavar} in order to show that $\overline{Z}$ is likely to be at least $\frac{\ell |B_u^{R_2}|}{4}$.
   Let $A_1,A_2,\ldots,A_{m}$ be the martingale adapted to the filtration
   $\mathcal{F}_i$ generated
   by $U_1,U_2,\ldots,U_i$; i.e., $A_i = \Ex{\overline{Z} \,\mid\, \mathcal{F}_i}$.
   Since changing the birthplace of ball $i$ (and keeping all other birthplaces the same) can change $Z$ by at most one,
   we have that
   \[
     \Ex{ |A_{i} - A_{i-1}| \, \big\vert \, \mathcal{F}_{i-1} } \leq 1.
   \]
   Now fix $i\in\{1,2,\ldots,m\}$.
   Let $\zeta_u$ be the value of $A_i$ when $U_i=u$ and let
   $\bar \zeta = \frac{1}{n}\sum_{u\in V}\zeta_u$.
   Then we have
   \begin{align*}
      \mathbf{E}_{U_i}\left[(A_i - A_{i-1})^2 \Mid \bigcap\nolimits_{j=1}^{i-1} \{U_j=u_j\}\right] = \frac{1}{n} \sum_{u \in V} (\zeta_u-\bar\zeta)^2,
   \end{align*}
   where the expectation above is taken with respect to $U_i$.
   Since $|\zeta_u-\zeta_{u'}|\leq 1$ for all $u,u' \in V$,
   we can write
   \begin{align*}
      \frac{1}{n} \sum_{u \in V} (\zeta_u-\bar\zeta)^2
      \leq \frac{1}{n} \sum_{u\in V} |\zeta_u-\bar\zeta|
      = \frac{1}{n} \sum_{u\in V}\bigg|\sum_{u'\in V}\frac{1}{n}(\zeta_{u} - \zeta_{u'})\bigg|
      \leq \frac{1}{n^2} \sum_{u\in V}\sum_{u'\in V}\left|\zeta_{u} - \zeta_{u'}\right|.
   \end{align*}
   Now consider a given realization of $U_1,U_2,\ldots,U_{i-1},U_{i+1},\ldots,U_m$, and let $\Gamma\subset V$ be the set of $|B_u^{R_2}|$ vertices with
   smallest loads (note that we skip the $i$th ball in the definition of $\Gamma$). Then, by adding the $i$th ball,
   $\zeta_{u}$ and $\zeta_{u'}$ only differ if at least one of $u$ or $u'$ is in $\Gamma$.
   Hence,
   $\sum_{u\in V}\sum_{u'\in V}\left|\zeta_{u} - \zeta_{u'}\right| \leq 2 |B_{u}^{R_2}|n$.
   Consequently,
   \[
     \mathbf{E}_{U_i}\left[(A_i - A_{i-1})^2 \Mid \bigcap\nolimits_{j=1}^{i-1} \{U_j=u_j\}\right] \leq \frac{2|B_u^{R_2}|}{n}.
   \]
   Now, \lemref{azumavar} gives
   \begin{align*}
     \Pro{ \overline{Z} < \frac{\ell |B_u^{R_2}|}{4}}
     \leq \Pro{ |\overline{Z}-\Ex{\overline{Z}}| \geq \frac{1}{2} \Ex{\overline{Z}} }
     &\leq \exp \left(-  \frac{  \left( \frac{1}{2} \Ex{\overline{Z}} \right)^2 }{ 4 \cdot \frac{|B_u^{R_2}|}{n} \cdot m + \frac{1}{6} \Ex{\overline{Z}}   }  \right).
   \end{align*}
   Clearly, $\Ex{\overline{Z}} \leq \frac{m |B_u^{R_2}|}{n}$, which gives that
   \[
     \Pro{ \overline{Z}  < \frac{\ell |B_u^{R_2}|}{4}}
     \leq \exp \left(-  \frac{  \Ex{\overline{Z}}^2 }{ 16 \cdot \frac{|B_u^{R_2}|}{n} \cdot m + \frac{2 m |B_u^{R_2}|}{3 n}    }  \right)
     \leq \exp \left(-  \frac{  \ell^2 |B_u^{R_2}|/4}{ 17 m/n}  \right).
   \]
   Using the value of $\ell$ and $m$, we have
   \[
     \Pro{ \overline{Z} < \frac{\ell |B_u^{R_2}|}{4}}
     \leq \exp \left(-  \frac{  \frac{m}{n} |B_u^{R_2}|}{68(28\log\Delta)^2}  \right)
     \leq \exp \left(-  \frac{  C R_2 |B_u^{R_2}|}{68(28\log\Delta)^2}  \right)
     \leq n^{-\frac{C}{68(28\log\Delta)^2}}.
   \]
   Due to our coupling which gives $Z \geq \overline{Z}$ we conclude that with probability
   at least $1-n^{-\frac{C}{68(28\log\Delta)^2}}$
   there exists a vertex $v \in B_{u}^{R_2}$ with $W_{v}^{(m)} \geq \frac{\ell}{4}$ and thus $X_v^{(m)} \geq
   \frac{\ell}{4} - R_2$. Then, by smoothness of the load vector (cf.\ Lemma~\ref{lem:balanced}), we have that with probability at least $1-n^{-\frac{C}{68(28\log\Delta)^2}}$,
   every vertex in $B_u^{R_2}$ has load at least $\frac{\ell}{4} - 3R_2 \geq \frac{m}{224 n \log \Delta}$, where the last step follows for all $C\geq 672\log\Delta$.
   Then the result follows by taking the union bound over all $u \in V$, which gives that with probability at least $1-n^{-\frac{C}{68(28\log\Delta)^2}+1}$, all vertices have load
   at least $\frac{m}{224 n \log \Delta}$. The proof is then completed by setting $C$ large enough with respect to $\alpha$ so that $\frac{C}{68(28\log\Delta)^2}-1\geq \alpha$.
\end{proof}

We prove a stronger version of Theorem~\ref{thm:covertime}, which holds also for non-transitive graphs.
For $\gamma\in(0,1/2]$, let
\begin{align*}
   R_2^{(\gamma)} = R_2^{(\gamma)}(G) = \max\big\{r\in\mathbb{N} \colon & \text{there exists } S\subseteq V \text{ with } |S|\geq n^{\frac{1}{2}+\gamma} \\
       & \text{ such that } r |B_u^{r}| < \log n \text{ for all $u\in S$}\big\}.
\end{align*}
Note that $R_2^{(\gamma)}$ is non-increasing with $\gamma$.
Also, when $G$ is vertex transitive, we have $R_2=R_2^{(\gamma)}+1$ for all $\gamma \in (0,1/2]$, because
in this case, for any given $r$, the size of $B_u^r$ is the same for all $u\in V$.
The theorem below establishes that, for any bounded-degree graph, if there exists a $\gamma\in(0,1/2]$ for which $R_2^{(\gamma)}=\Theta(R_2)$, then
the cover time is $\Theta(R_2)$.
\begin{thm}[General version of Theorem~\ref{thm:covertime}]\label{thm:covertime2}
   Let $G$ be any graph with bounded degrees.
   For any $\gamma\in(0,1/2]$ and $\alpha\geq 1$, there exists $C=C(\alpha,\Delta)$ so that
   $$
      \Pro{T_\cov < \frac{\gamma R_2^{(\gamma)}n}{8\Delta} } \leq n^{-\omega(1)}
      \quad\text{ and }\quad
      \Pro{T_\cov \geq C R_2 n} \leq n^{-\alpha}.
   $$
\end{thm}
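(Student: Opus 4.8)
The plan is to prove the two bounds separately. The upper bound $\Pro{T_\cov \ge C R_2 n}\le n^{-\alpha}$ is essentially immediate from \proref{blanket}: apply it with $m = C R_2 n$ (so its hypothesis $m\ge C R_2 n$ holds); for $C$ large with respect to $\Delta$ we have $m/(224 n\log\Delta) = C R_2/(224\log\Delta)\ge 1$ since $R_2\ge 1$, hence \proref{blanket} gives $\Pro{X_{\min}^{(C R_2 n)} = 0}\le n^{-\alpha}$, i.e.\ $\Pro{T_\cov > C R_2 n}\le n^{-\alpha}$; enlarging $C$ by a constant turns $>$ into $\ge$. So the bulk of the work is the lower bound, for which I would follow the template of the lower-bound half of the proof of \thmref{maxload2}, but invoking the \emph{second} (upper) inequality of \lemref{pyramid} in place of the first.

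Write $R = R_2^{(\gamma)}$, $m^\star = \gamma R n/(8\Delta)$, $\lambda = m^\star/n = \gamma R/(8\Delta)$, and fix a set $S\subseteq V$ with $|S|\ge n^{1/2+\gamma}$ witnessing the definition of $R$, so that $R\,|B_v^{R}| < \log n$ for every $v\in S$. Working with the Poissonized versions of the processes (mean $m^\star$ balls), and using that by \lemref{poisson} with $\sqrt{m^\star} = \poly(n)$ it suffices to bound the Poissonized failure probability, I reduce to showing that with high probability at least one vertex of $S$ is still empty after the Poissonized local search. The Poissonized analogue of \lemref{pyramid} (which follows from \lemref{majorization} by conditioning on the number of balls) gives, for each $v\in S$,
$$
   \Pro{X_v^{(m^\star)} = 0}\ \ge\ \Pro{\bar X_w^{(m^\star)}\le d_G(v,w)\ \text{for all }w\in V}\ =\ \prod_{w\in V}\Pro{\mathrm{Po}(\lambda)\le d_G(v,w)},
$$
since the loads $\bar X_w^{(m^\star)}$ of the Poissonized $1$-choice process are i.i.d.\ $\mathrm{Po}(\lambda)$. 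Taking logarithms, the right-hand side equals $\exp\!\big(\sum_{r\ge 0}|N_v^r|\log\Pro{\mathrm{Po}(\lambda)\le r}\big)$, and I split the sum at radius $R$. For $r\le R$ I bound crudely $\log\Pro{\mathrm{Po}(\lambda)\le r}\ge\log\Pro{\mathrm{Po}(\lambda)=0}=-\lambda$, so this part contributes at least $-\lambda\,|B_v^{R}| = -\gamma R|B_v^R|/(8\Delta) > -\gamma\log n/(8\Delta)$ by the choice of $S$. For $r>R$ I use the tail bound $\Pro{\mathrm{Po}(\lambda)\ge r+1}\le(e\lambda/(r+1))^{r+1}$ together with $|N_v^r|\le\Delta^r$ and $e\lambda/(r+1) < (2e\Delta)^{-1}$ — valid for $r>R$ because the choice of $m^\star$ gives $2e^2\Delta\lambda = e^2\gamma R/4 < R$ (as $\gamma\le 1/2$) — so that $\sum_{r>R}|N_v^r|\,(-\log\Pro{\mathrm{Po}(\lambda)\le r})$ is at most a geometric series summing to an absolute constant. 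Since $1/(8\Delta)<1/2$, this yields $\Pro{X_v^{(m^\star)} = 0}\ge n^{-\gamma/2}$ for all $v\in S$ once $n$ is large. Letting $Y$ count the empty vertices of $S$ we get $\Ex{Y}\ge|S|\,n^{-\gamma/2}\ge n^{1/2+\gamma/2}$; since changing one birthplace changes the load vector by at most $2$ in $\ell_1$ (\lemref{lipschitz}), $Y$ is $2$-Lipschitz in the birthplaces, so — conditioning on the Poisson number of balls being at most $2m^\star$, which fails with probability $e^{-\Omega(n)}$ as $m^\star=\Omega(n)$ — a bounded-differences (McDiarmid-type) inequality, exactly as in the proof of \thmref{maxload2}, gives
$$
   \Pro{Y = 0}\ \le\ \exp\!\Big(-\Omega\big(\Ex{Y}^2/m^\star\big)\Big) + e^{-\Omega(n)}\ =\ \exp\!\big(-n^{\Omega(1)}\big)\ =\ n^{-\omega(1)},
$$
using $m^\star = \Oh(n\log n)$ (recall $R_2^{(\gamma)}\le R_2\le\log n$) and that $\gamma$ is a positive constant. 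De-Poissonizing via \lemref{poisson} transfers this to the non-Poissonized process and gives $\Pro{T_\cov < m^\star}\le n^{-\omega(1)}$ (floor/ceiling adjustments being routine), as required.

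The main obstacle is the per-vertex estimate $\Pro{X_v^{(m^\star)}=0}\ge n^{-\gamma/2}$. One must resist comparing it with $\Pro{\bar X_v^{(m^\star)}=0}$, which is far too small once $\lambda=\omega(1)$ (as on paths or $d$-dimensional grids, where the cover time exceeds $n$): the correct comparison, supplied by \lemref{pyramid}, is with the probability that the \emph{entire} $1$-choice load profile around $v$ stays below the ``pyramid'' $d_G(v,\cdot)$. Turning this into a bound of the form $n^{-\gamma/2}$ forces one to truncate the Poisson-tail sum at radius $\Theta(\Delta\lambda)$, and it is precisely the constant $1/(8\Delta)$ in the statement that makes this effective radius a small multiple of $R_2^{(\gamma)}$, hence smaller than $R_2^{(\gamma)}$, where the ball sizes $|B_v^{\cdot}|$ are still controlled by the definition of $R_2^{(\gamma)}$; the $\Delta$ in the denominator also absorbs the $\Delta^r$ growth of $|N_v^r|$ in the tail.
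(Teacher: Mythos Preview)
Your proposal is correct and follows essentially the same approach as the paper: the upper bound is read off from \proref{blanket}, and for the lower bound you Poissonize, apply the second inequality of \lemref{pyramid} with $\ell=1$ to each $v\in S$, split the resulting product at radius $R_2^{(\gamma)}$ (crude bound $\Pro{\mathrm{Po}(\lambda)=0}$ inside, geometric Poisson-tail bound outside), obtain $\Pro{X_v^{(m^\star)}=0}\ge n^{-c\gamma}$, and then concentrate the count $Y$ via the Lipschitz property and McDiarmid exactly as in the proof of \thmref{maxload2}. The only cosmetic differences are that the paper writes the outer part as a union bound rather than a product of logs, gets the slightly sharper exponent $n^{-\gamma/8}$ and uses that $Y$ is in fact $1$-Lipschitz (your $2$-Lipschitz is harmless here).
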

\begin{proof}
   The second inequality is established by Proposition~\ref{pro:blanket}.
   For the first inequality,
   let $S$ be a set of $n^{\frac{1}{2}+\gamma}$ vertices $u$ for which $R_2^{(\gamma)} \cdot |B_u^{R_2^{(\gamma)}}|<\log n$.
   Let $m= \frac{\gamma R_2^{(\gamma)}n}{8 \Delta}$.
   We consider the Poissonized version of the local search allocation and the 1-choice process. We abuse notation slightly and let $X_v^{(m)}$ and $\bar X_v^{(m)}$ denote the load of $v$ for the Poissonized version of the local search allocation and 1-choice process, respectively, when the expected number of balls allocated in total is $m$.
   For any $u\in S$, we will bound the probability that $X_u^{(m)}=0$.
   First note that since Lemma~\ref{lem:pyramid} works for all $m$, it also works for the Poissonized version by conditioning on the total number of balls.
   Applying the second part of Lemma~\ref{lem:pyramid}, we have that
   $$
      \Pro{X_u^{(m)}=0}
      \geq \Pro{\bigcap\nolimits_{w\in V}\Big\{\overline{X}_w^{(m)}\leq d_G(u,w)\Big\}}.
   $$
   Recall that $N_u^r$ is the set of vertices at distance $r$ from $u$ and $B_u^\ell=\bigcup_{r=0}^\ell N_u^r$.
   By independence of the Poissonized model, we can write
   \begin{align*}
      \Pro{X_u^{(m)}=0}
      &\geq \Pro{\bigcap\nolimits_{w\in B_u^{R_2^{(\gamma)}}}\Big\{\overline{X}_w^{(m)}=0\Big\}}\Pro{\bigcap\nolimits_{i>R_2^{(\gamma)}}\bigcap\nolimits_{w\in N_u^i}\Big\{\overline{X}_w^{(m)}\leq i\Big\}}\\
      &\geq \exp\left(-\frac{m|B_u^{R_2^{(\gamma)}}|}{n}\right)\left(1-\sum\nolimits_{i>R_2^{(\gamma)}}\sum\nolimits_{w\in N_u^i}\Pro{\overline{X}_w^{(m)}> i}\right)\\
      &\geq \exp\left(-\frac{m|B_u^{R_2^{(\gamma)}}|}{n}\right)\left(1-2\sum\nolimits_{i>R_2^{(\gamma)}}\sum\nolimits_{w\in N_u^i}\left(\frac{m e}{ni}\right)^i\right),
   \end{align*}
   where the last inequality follows by the last statement of Lemma~\ref{lem:AlonSpencer}. Using the simple bound $|N_u^i| \leq\Delta^i$ and the fact that
   $\frac{m e\Delta}{ni}\leq \frac{1}{2}$ for all $i\geq R_2^{(\gamma)}$ (as $\Delta/R_2^{(\gamma)} = o(1)$ since $\Delta=\Oh(1)$), we have
   $$
      \Pro{X_u^{(m)}=0}
      \geq \exp\left(-\frac{m|B_u^{R_2^{(\gamma)}}|}{n}\right)\left(1-4 \left( \frac{me\Delta}{nR_2^{(\gamma)}} \right)^{R_2^{(\gamma)}} \right)
      \geq n^{-\frac{\gamma}{8\Delta}} \cdot \frac{1}{2}
      \geq n^{-\frac{\gamma}{8}} \cdot \frac{1}{2}.
   $$
   Now let $Y$ be the random variable defined as the number of vertices $v\in S$ satisfying $X_{v}^{(m)}=0$.
   Let $K$ be the random variable for the total number of balls allocated and regard $Y$ as a function of the $K$ independently chosen birthplaces
   $U_1,U_2,\ldots,U_K$. Then, $Y$ is $1$-Lipschitz by \lemref{lipschitz} for any given $K$. The calculations above give that
   $$
      \Ex{Y \mid K\leq 2em} \geq \Ex{Y} \geq \frac{n^{\frac{1}{2}+\frac{7\gamma}{8}}}{2}.
   $$
   Note that $m =\frac{\gamma R_2^{(\gamma)}n}{8 \Delta}=\Oh(n\log n) $ for any $G$.
   With this, we apply \lemref{mobd} and the last statement of Lemma~\ref{lem:AlonSpencer} to obtain
   \begin{align*}
     &\Pro{ X_{\min}^{(n)} = 0}\\
     &\leq \Pro{ \{|Y - \Ex{Y \mid K\leq 2\ce m}| \geq \frac{1}{2} \Ex{Y \mid K\leq 2\ce m}\} \, \Big| \, \{ K\leq 2\ce m\}} + \Pro{K> 2\ce m}\\
     &\leq 2\exp\left(-\frac{n^{1+14\gamma/8}}{8 (2\ce m)}\right) + 2^{1-2m\ce}= n^{-\omega(1)}.
   \end{align*}
   This result can then be translated to the non-Poissonized process using Lemma~\ref{lem:poisson} and the fact that $m=\Oh(n\log n)$.
\end{proof}

We now state and prove a stronger version of Theorem~\ref{thm:dense}.
\begin{thm}[General version of Theorem~\ref{thm:dense}]\label{thm:densefull}
   Let $G$ be any $d$-regular graph.
   Then, for any $\alpha >1$ there exists $C=C(\alpha) > 0$ such that
   \[
     \Pro{ T_\cov \geq C \cdot \left( n \Big(1 + \frac{\log n \cdot \log d}{d} \Big) \right) } \leq n^{-\alpha}.
   \]
\end{thm}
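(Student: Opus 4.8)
The plan is to work throughout with the \emph{Poissonized} version of the local search allocation, with expected total number of balls $m := C\,n\bigl(1+\tfrac{\log n\log d}{d}\bigr)$, and to transfer the conclusion to the fixed‑$m$ process via \lemref{poisson} at the very end; since $m=\Oh(n\log n)$ for every $d$, this transfer only costs a factor $\Oh(\sqrt{n\log n})$, so it is enough to prove $\Pro{X_v^{(m)}=0}\le n^{-(\alpha+2)}$ for each fixed vertex $v$ and then take a union bound over $V$.

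The combinatorial backbone consists of two observations: (i) a ball born at an empty vertex covers it, since an empty vertex is a local minimum; and (ii) a ball born at a \emph{loaded} vertex that has an empty neighbour flows to one of its empty neighbours and covers it. Iterating (ii): if $v$ never receives a ball, then each neighbour $u$ of $v$ receives at most $d$ balls in total — its first ball covers $u$ itself (if $u$ is still empty), every further ball born at $u$ covers a distinct hitherto‑empty neighbour of $u$, and once all neighbours of $u$ except $v$ are covered the next ball at $u$ would land on $v$, a contradiction. Since in the Poissonized model the numbers of balls born at the $d+1$ distinct vertices $\{v\}\cup N_v$ are independent $\mathrm{Poisson}(m/n)$ variables, this already yields
\[
 \Pro{X_v^{(m)}=0}\ \le\ e^{-m/n}\,\Bigl(\Pro{\mathrm{Poisson}(m/n)\le d}\Bigr)^{d}.
\]
If $d$ is bounded by a constant this is enough: then $\tfrac{\log n\log d}{d}=\Theta(\log n)$, the budget is $\Theta(n\log n)$, and the first factor alone suffices with $m=(\alpha+2)n\log n$ (this is just the coupon‑collector bound that also follows from majorization with the $1$‑choice process, \lemref{1choice}).

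For larger but still $d=\Oh(\log n)$ I would re‑run the argument of \proref{blanket} keeping the degree $d$ explicit in place of $\Delta$: use the weights $\mu(w)=d_G(u,w)$ and \lemref{majorization} to compare the sum of the $|B_u^{R_2}|$ smallest weights with the corresponding sum for the $1$‑choice process, apply the concentration inequalities (\lemref{improvedhoeffding} and \lemref{azumavar}) to conclude that this sum — hence some weight in $B_u^{R_2}$ — is $\Omega\bigl(m/(n\log d)\bigr)$, and finish by smoothness (\lemref{balanced}) to get $X_{\min}^{(m)}\ge 1$ once $m=\Omega\bigl(n(R_2+\log d)\bigr)$. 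Because $R_2\le\lceil\log n/(d+1)\rceil$ always, in this range $R_2+\log d=\Oh\bigl(1+\tfrac{\log n\log d}{d}\bigr)$, matching the budget; an alternative for $d=\Oh(\sqrt{\log n\log d})$ is to note that balls born in $B_v^{1}$ all terminate inside $B_v^{2}$, so if $v$ stays empty then the $\mathrm{Poisson}\bigl((d+1)m/n\bigr)$ count of balls born in $B_v^{1}$ is at most the total load $2|B_v^{2}|=\Oh(d^2)$ in $B_v^{2}$, which is super‑polynomially unlikely here by Chernoff.

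The genuinely hard case, and the main obstacle, is $d\gg\log n$: the budget then forces $m/n=\Oh(\tfrac{\log n\log d}{d})$, which can be $\Oh(1)$ and is in any event $\ll d$, so observation (ii) is vacuous (each neighbour of $v$ receives only $\Oh(1)\ll d$ balls almost surely) and the majorization route collapses as well, since the $1$‑choice process still has minimum load $0$ after $\Oh(n)$ balls. Here one must exploit that the local search self‑balances so strongly on dense graphs that $X_v^{(m)}\ge1$ whp already for $m=\Oh(n)$ — precisely the content of \thmref{dense}. I expect this to require either a layered‑induction/witness argument in the spirit of the $\Oh(1)$ max‑load bound for $\Omega(\log n)$‑regular graphs in~\cite{BSSS13} (fix a $v$ that stays empty and track the number of balls whose local search terminates in successively larger neighbourhoods of $v$, deriving a super‑exponentially decaying recursion for the probability that the terminal load near $v$ is $0$), or a direct potential argument showing that, whenever the number of empty vertices is still $\omega(1)$, a ball's local search terminates at an empty vertex with probability $1-\Oh(d^{-1})$ — as it does deterministically on $K_n$ — so that the set of empty vertices shrinks essentially at rate $1$. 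The crux in either approach is controlling the correlated \emph{long‑range routing} of balls through the descending load landscape: a ball born far from $v$ can still be steered into $v$, which is what makes dense graphs behave like $K_n$ and what must be quantified to upgrade the $\Oh(n\log d)$ bound of the moderate regime to the claimed $\Oh(n)$.
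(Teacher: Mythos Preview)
Your proposal is incomplete precisely where the theorem has content. For $d=\Oh(\log n)$ the bound $n\bigl(1+\tfrac{\log n\log d}{d}\bigr)$ is already implied (up to constants) by the generic $\Oh(nR_2)$ cover-time bound of Proposition~\ref{pro:blanket}, so the only new statement in Theorem~\ref{thm:densefull} is that the cover time is $\Oh(n)$ once $d\gg\log n\log\log n$. For that regime you offer two heuristic plans (a layered witness argument, or a potential argument that the uncovered set shrinks at rate $1-\Oh(d^{-1})$) but no proof; your per-vertex bound $e^{-m/n}\bigl(\Pro{\mathrm{Poisson}(m/n)\le d}\bigr)^d$ is correct but, as you note, vacuous when $m/n=\Oh(1)$.

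The paper's argument is entirely different and uniform in $d$: it couples the local search allocation with the \emph{coupon collector process} of Adler et al.~\cite{AHKV03}, in which at each step a uniformly random vertex is picked, covered if uncovered, and otherwise a uniformly random uncovered neighbour (if any) is covered. A short induction, using a shared random ranking of the neighbours for tie-breaking, shows that at every time the set covered by local search contains the set covered by this auxiliary process; the bound $\Oh\bigl(n(1+\tfrac{\log n\log d}{d})\bigr)$ then comes directly from~\cite{AHKV03}. The irony is that your observations (i) and (ii) are exactly what makes the coupling go through---(i) matches the first branch of the coupon collector rule and (ii) the second---but you used them only to bound the number of births at a single neighbour rather than to set up a process-level domination. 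Turning (i)--(ii) into that one-line coupling and invoking~\cite{AHKV03} is the missing idea.
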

\begin{proof}
   The result is shown by a coupling with the following stochastic process, introduced in \cite{AHKV03}, which we call {\em coupon collector process}. Initially, every node of $G$ is uncovered. Then in each round $i$, a node $\widetilde{U}_i$ is chosen independently and uniformly at random. If node $\widetilde{U}_i$ is uncovered, then it becomes covered. Otherwise, if $\widetilde{U}_i$ has any uncovered neighbor, then a random node among this set becomes covered. For this process, let us denote by $\widetilde{C}^{(i)}$ the set of covered nodes after round $i$. We shall prove that there is a coupling so that for every round $i$, $\widetilde{C}^{(i)} \subseteq \{ v \in V \colon X_v^{(i)} \geq 1 \}$; in other words, every node which is covered by the coupon collector process in round $i$ is also covered by the local search allocation after the allocation of ball $i$.

   The coupling is shown by induction. Clearly, the claim holds for $i=1$. Consider now the execution of any round $i+1$, assuming that the induction hypothesis holds for round $i$. In our coupling, we choose the same node $v$ for $\widetilde{U}_{i+1}$ and $U_{i+1}$.

   In the first case, we assume that $v$ is uncovered in the coupon collector process. Then the coupon collector process will cover node $v$ in round $i+1$. If $v$ has not been covered by the local search allocation, then we have $X_v^{(i)}=0$ and hence ball $i+1$ will be allocated on node $v$ in round $i+1$. Otherwise, $v$ has been covered previously. In either case, we conclude that node $v$ is covered after round $i+1$ in the local search allocation.

   For the second case, suppose that $v$ is covered in the coupon collector process. Then the coupon collector process will try to cover an uncovered neighbor of $v$ if there exists one. This uncovered neighbor is chosen uniformly at random from all uncovered neighbors of $v$. This random experiment can be described by first choosing a random ranking of all $\deg(v)$ neighbors and then picking the uncovered neighbor with the highest rank, say node $u$. In our coupling, we assume that the local search allocation chooses the same ranking of all $\deg(v)$ neighbors. This, together, with the induction hypothesis, guarantees that if there is node $u$ which becomes covered by the coupon collector process, then this node $u$ also becomes covered by the local search allocation if it has not been covered in an earlier round.

   Combining the two cases, we have shown that there is a coupling such that $\widetilde{C}^{(i)} \subseteq \{ v \in V \colon X_v^{(i)} \geq 1 \}$ for any integer $i \geq 1$. Since it was shown for the coupon collector process in \cite{AHKV03} that with probability $1-n^{-c}$ for some constant $c>0$, $O(n(1+\frac{\log n \cdot \log d}{d}))$ rounds suffice to cover all nodes, the theorem follows.
\end{proof}

\section{Remarks and open questions}

\subsection*{Blanket time}
In analogy with the cover time for random walks, for each $\delta > 1$, we can define the blanket time as the first time at which the load of each vertex is in the interval $(\frac{1}{\delta} \cdot \frac{m}{n},\delta \cdot \frac{m}{ n})$. It follows from Theorem~\ref{thm:highdensity} and Proposition~\ref{pro:blanket} that,
for bounded-degree vertex-transitive graphs, the blanket time is $\Theta(n R_2)$ for all large enough $\delta$.

\subsection*{Extreme graphs}
Note that for any connected graph $G$, we have $R_1(G)\leq \sqrt{\frac{\log n}{\log \log n}}$  and
$R_2(G) \leq \sqrt{\log n}$. Thus, the cycle is the graph with the largest possible maximum load (when $m=n$) and largest possible cover time among
all bounded-degree graphs up to constant factors. Also, for any graph $G$ with bounded degrees,
we have $R_1(G)$ and $R_2(G)$ of order $\Omega(\log\log n)$.
Thus bounded-degree expanders are the graphs with the smallest maximum load (when $m=n$) and smallest cover time among all bounded-degree graphs up to constant factors.

\subsection*{Maximum load and birthplace}
Consider the case $m=n$ and let $u_\star$ be the vertex at which the maximum number of balls are born.
A simple lower bound for the maximum load can be obtained by considering only the balls that are born at $u_\star$. Then, by monotonicity (cf.\ Lemma~\ref{lem:monotonicity}), we may consider
the scenario where $\frac{\log n}{\log \log n}$ balls are born at $u_{\star}$ and no other vertex is a birthplace of a ball.
Since a ball born at a vertex with load $\beta$ will be allocated within distance $\beta$, it follows that
\begin{equation}
   X_{\max}^{(n)} \geq \min\Big\{\beta\geq 0 \colon \beta |B_{u_\star}^\beta| \geq \frac{\log n}{\log\log n}\Big\}.
   \label{eq:naivebound}
\end{equation}
 This is the strategy employed in~\cite[Lemma~2.4]{BSSS13}. Below we show that this is not tight even for vertex-transitive graphs; in other words, the balls born at the single vertex $u_\star$ do not determine the maximum load.

Consider the graph $G$ obtained via the Cartesian product of a vertex-transitive expander of size $n_0=\frac{\log n}{(\log \log n)^3}$ and a cycle of size $n/n_0$.
Let $r_0$ be the diameter of the expander; we have $r_0=\Theta(\log\log n)$. Then, for any vertex $u$ of $G$ and any $r \geq 1$, we have
\begin{equation}
   (r-r_0) n_0 \leq |B_u^{r}| \leq (2r+1) n_0.
   \label{eq:sizeballs}
\end{equation}
Therefore, the bound in~\eqref{eq:naivebound} gives that
$X_{\max}^{(n)} = \Omega(\log\log n)$.

Now we estimate the maximum number of balls that are born in each expander graph of $G$. This can be done via a 1-choice process with
$n'=\frac{n}{n_0}$ bins
and $m'=n=\Theta\left(\frac{n' \log n'}{(\log \log n')^3}\right)$ balls, which using~\cite[Theorem~1]{RS98} has maximum load
$$
   \Theta\left(\frac{\log n'}{\log \frac{n'\log n'}{m'}}\right)
   \geq \frac{\epsilon \log n}{\log\log\log n},
$$
for some constant $\epsilon>0$ and all large enough $n$.
Let $S$ denote the expander with the maximum number of balls born there. Then, using the same reasoning as in~\eqref{eq:naivebound}, and denoting by $B_S^\beta$ the set of vertices within distance $\beta$ from $S$, we have
\begin{align*}
   X_{\max}^{(n)}
   &\geq \min\left\{\beta \geq 0\colon \beta |B_S^\beta| \geq \frac{\epsilon \log n}{\log\log\log n}\right\}\\
   &\geq \min\left\{\beta \geq 0 \colon \beta(2\beta+1) \frac{\log n}{(\log\log n)^3}  \geq \frac{\epsilon \log n}{\log\log\log n}\right\}\\
   &= \Omega\left(\frac{(\log \log n)^{1.5}}{(\log\log\log n)^{0.5}}\right),
\end{align*}
where the first inequality follows from~\eqref{eq:sizeballs}.

\subsection*{Open questions}
\begin{enumerate}
   \item For any vertex-transitive graph (not necessarily of bounded degrees), does it hold that $X_{\max}^{(n)}=\Theta(R_1)$ and $T_\cov=\Theta(R_2 n)$ with high probability?
   \item For any vertex-transitive graph (not necessarily of bounded degrees) and any $m=\omega(n R_2)$, does it hold that $X_{\max}^{(m)}=\frac{m}{n}+\Theta(R_2)$ with high probability?
   \item For any vertex-transitive graph, is the blanket time of order $nR_2$ for all $\epsilon\in(0,1)$? In particular, is the blanket time of the same order as the cover time for all vertex-transitive graphs?
   \item Let $G=(V,E)$ and $G'=(V,E')$ be two graphs such that $E\subset E'$. For any $m$, does it hold that
   the maximum load on $G'$ is stochastically dominated by the maximum load on $G$?
\end{enumerate}
\bibliographystyle{abbrv}
\bibliography{ballbin}

\appendix
\section{Standard technical results}
\begin{lem}[{\cite[Lemma~1.2]{McD89}}]\label{lem:mobd}
Let $X_1,X_2,\ldots,X_n$ be independent random variables with $X_k$ taking values in a set $\Lambda_k$ for each $k$. Suppose that the measurable function $f: \prod_{k=1}^n \Lambda_k \rightarrow \mathbb{R}$ satisfies for every $k$ that
\[
   | f(x) - f(x') | \leq c_k,
\]
whenever the vectors $x$ and $x'$ differ only in the $k$th coordinate. Then for any $\lambda > 0$,
\[
  \Pro{ | f - \Ex{f} | \geq \lambda} \leq 2 \cdot \exp\left(-
  \frac{2 \lambda^2}{ \sum_{k=1}^{n} c_k^2  }  \right).
\]
\end{lem}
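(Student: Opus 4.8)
The plan is to prove this by the standard \emph{method of bounded differences}: build the Doob martingale that exposes the coordinates $X_1,\ldots,X_n$ one at a time, show that its increments have conditionally bounded range, apply Hoeffding's lemma to control the conditional moment generating function, and finish with a Chernoff-type optimization.

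Concretely, first I would set $Y_0 = \ex{f(X_1,\ldots,X_n)}$ and $Y_k = \ex{f(X_1,\ldots,X_n)\mid X_1,\ldots,X_k}$ for $1\le k\le n$, so that $Y_0,\ldots,Y_n$ is a martingale adapted to $\mathcal{F}_k=\sigma(X_1,\ldots,X_k)$ with $Y_n=f(X_1,\ldots,X_n)$. Writing $g_k(x_1,\ldots,x_k)=\ex{f\mid X_1=x_1,\ldots,X_k=x_k}$ and using independence of the $X_i$, one has $Y_{k-1}=\EX{X_k}{g_k(X_1,\ldots,X_{k-1},X_k)}$, so the increment $D_k:=Y_k-Y_{k-1}$ lies, conditionally on $\mathcal{F}_{k-1}$, between the $\mathcal{F}_{k-1}$-measurable quantities $A_k=\inf_x g_k(X_1,\ldots,X_{k-1},x)-Y_{k-1}$ and $B_k=\sup_x g_k(X_1,\ldots,X_{k-1},x)-Y_{k-1}$, whose difference $B_k-A_k$ equals the oscillation of $g_k$ in its last argument. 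Next I would check that this oscillation is at most $c_k$ (see the obstacle below), so that $D_k$ is mean-zero given $\mathcal{F}_{k-1}$ with conditional range $\le c_k$; Hoeffding's lemma then gives $\ex{e^{tD_k}\mid\mathcal{F}_{k-1}}\le e^{t^2c_k^2/8}$ for all $t>0$. Iterating over $k$ via the tower property yields $\ex{e^{t(f-\ex{f})}}\le \exp\!\big(t^2\sum_{k=1}^n c_k^2/8\big)$, and Markov's inequality applied to $e^{t(f-\ex{f})}$ with the choice $t=4\lambda/\sum_k c_k^2$ produces the one-sided bound $\Pro{f-\ex{f}\ge\lambda}\le\exp(-2\lambda^2/\sum_k c_k^2)$. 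Running the same argument with $-f$ in place of $f$ and taking a union bound gives the claimed two-sided inequality with the factor $2$.

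The main obstacle is the increment-range estimate: one must argue that the worst-case range of $g_k$ over its last coordinate is still controlled by $c_k$, even though $g_k$ is an average of $f$ over the coordinates $X_{k+1},\ldots,X_n$. The clean way to see this is that for fixed $x_1,\ldots,x_{k-1}$ and any two values $x,x'$, independence gives $g_k(x_1,\ldots,x_{k-1},x)-g_k(x_1,\ldots,x_{k-1},x')=\ex{f(x_1,\ldots,x,X_{k+1},\ldots,X_n)-f(x_1,\ldots,x',X_{k+1},\ldots,X_n)}$, and the integrand is bounded in absolute value by $c_k$ pointwise by hypothesis, hence so is the average; this is exactly where independence of the $X_i$ is essential. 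The remaining ingredients — Hoeffding's lemma and the Chernoff optimization in $t$ — are routine.
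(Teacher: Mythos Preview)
Your proof is correct and is precisely the standard argument (Doob martingale, Hoeffding's lemma on the conditional increments, Chernoff optimization, symmetrization). The paper itself does not supply a proof of this lemma: it is listed in the appendix under ``Standard technical results'' and simply cited from McDiarmid~\cite{McD89}, so there is no in-paper proof to compare against; your write-up reproduces the classical derivation from that reference.
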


\begin{lem}[{\cite[Theorem~6.1]{CL06}}]\label{lem:azumavar}
   Let $X_0,X_1,\ldots,X_m$ be a martingale adapted to the filtration $\mathcal{F}_i$.
   Suppose that there exists a fixed positive $c$ for which $|X_i-X_{i-1}|\leq c$ for all $i$ and
   there exists $c'$ such that $\Ex{(X_i-X_{i-1})^2 \Mid \mathcal{F}_{i-1}}\leq c'$ for all $i$.
   Then,
   $$
      \Pro{|X_m - X_0| \geq \lambda} \leq \exp\left(-\frac{\lambda^2}{2c'm + c\lambda /3}\right).
   $$
\end{lem}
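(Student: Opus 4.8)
\emph{Proof proposal.} This is a Bernstein/Freedman-type tail inequality for a martingale with bounded increments and controlled conditional variance, and the plan is to run the exponential-moment (Chernoff) method in the martingale setting. Write $Y_i := X_i - X_{i-1}$ for the martingale differences, so that $\Ex{Y_i \mid \mathcal{F}_{i-1}} = 0$, $|Y_i| \le c$ almost surely, and $\Ex{Y_i^2 \mid \mathcal{F}_{i-1}} \le c'$ almost surely; we may assume $c' > 0$, since otherwise all increments vanish and the bound is trivial. For any $t \in (0, 3/c)$, Markov's inequality applied to $e^{t(X_m - X_0)} = \prod_{i=1}^{m} e^{tY_i}$ gives
$$ \Pro{X_m - X_0 \geq \lambda} \leq e^{-t\lambda}\, \Ex{\prod_{i=1}^{m} e^{tY_i}}. $$

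The core step is to bound the conditional moment-generating function of a single increment. I would use the elementary inequality $e^{x} - 1 - x \le \frac{x^2/2}{1 - x/3}$, valid for all $x < 3$ (for $x \ge 0$ it follows by comparing $e^x$ with a geometric series term by term, using $(j+2)! \ge 2\cdot 3^{j}$; for $x \le 0$ it already holds with the stronger right-hand side $x^2/2$). Applying it with $x = tY_i$ --- legitimate since $tY_i \le tc < 3$, whence $1 - tY_i/3 \ge 1 - tc/3 > 0$ --- gives $e^{tY_i} \le 1 + tY_i + \frac{t^2 Y_i^2/2}{1 - tc/3}$; taking conditional expectation given $\mathcal{F}_{i-1}$ annihilates the linear term and leaves
$$ \Ex{e^{tY_i} \mid \mathcal{F}_{i-1}} \leq 1 + \frac{t^2 c'/2}{1 - tc/3} \leq \exp\!\left(\frac{t^2 c'/2}{1 - tc/3}\right). $$
I would then peel the product off from the inside outward using the tower property --- conditioning on $\mathcal{F}_{m-1}$ extracts the factor for $Y_m$, and iterating $m$ times --- to obtain $\Ex{\prod_{i=1}^{m} e^{tY_i}} \le \exp\!\left(\frac{m t^2 c'/2}{1 - tc/3}\right)$, and therefore
$$ \Pro{X_m - X_0 \geq \lambda} \leq \exp\!\left(-t\lambda + \frac{m t^2 c'/2}{1 - tc/3}\right), \qquad 0 < t < 3/c. $$

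Finally I would optimize over $t$ and symmetrize. Choosing $t = \lambda/(c'm + c\lambda/3)$, which lies in $(0, 3/c)$ since $c' > 0$, makes $1 - tc/3 = c'm/(c'm + c\lambda/3)$, and a one-line substitution collapses the exponent to $-\lambda^2/(2(c'm + c\lambda/3))$. Running the identical argument for the martingale $(-X_i)_i$ handles the lower tail, and a union bound over the two tails yields a two-sided estimate for $|X_m - X_0|$; the extra factor $2$ is absorbed into the constant, or simply omitted, since only the one-sided bound is invoked in the proof of Proposition~\ref{pro:blanket}.

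There is no genuine obstacle here --- the argument uses nothing about the filtration beyond the two stated hypotheses. The only points that need attention are: checking that the optimizing $t = \lambda/(c'm + c\lambda/3)$ stays inside the admissible interval $(0, 3/c)$, which is immediate once $c' > 0$; and the precise numerical constant attached to $c\lambda$ in the denominator, which depends on the exact moment-generating-function estimate and on the choice of $t$. The route above gives the denominator $2(c'm + c\lambda/3) = 2c'm + \frac{2}{3}c\lambda$, matching the stated bound up to this lower-order term (the expression $2c'm + c\lambda/3$ in the statement is most naturally read as $2(c'm + c\lambda/3)$); the resulting estimate is the standard Bernstein--Freedman inequality, interchangeable with the stated one in every application in the paper, and one may equivalently just cite \cite[Theorem~6.1]{CL06}, as the authors do.
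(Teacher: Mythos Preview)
The paper does not prove this lemma; it is quoted verbatim from \cite[Theorem~6.1]{CL06} as a standard technical result in the appendix. Your sketch is the standard Bernstein--Freedman exponential-moment argument and is correct. The one point worth flagging --- which you already do --- is the constant in the lower-order term: your optimization yields denominator $2c'm + \tfrac{2}{3}c\lambda$, whereas the statement as printed has $2c'm + \tfrac{1}{3}c\lambda$; this is either a typographical slip or a slightly sharper form in \cite{CL06}, and is in any case immaterial for the single application in Proposition~\ref{pro:blanket}, where only the one-sided lower-tail bound with any fixed constant is used.
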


For the special case where $X_1,\ldots,X_m$ are independent Bernoulli random variables, we can apply the above lemma
to the random variables $(X_i-\Ex{X_i})_i$ with $c'=\Ex{X_1}$ and $c=1$ to obtain the inequality below.
\begin{lem}\label{lem:improvedhoeffding}
   Let $X_1,\ldots,X_m$ be $m$ independent, identically distributed Bernoulli random variables.
   Let $X:=\sum_{i=1}^m X_i$. Then, for any $\lambda > 0$,
\begin{align*}
 \Pro{ | X - \Ex{X} | \geq \lambda } \leq \exp \left(- \frac{\lambda^2}{ 2\Ex{X} + \lambda/3 }		 \right).
\end{align*}
\end{lem}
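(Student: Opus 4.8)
The plan is to obtain Lemma~\ref{lem:improvedhoeffding} as an immediate consequence of the Freedman-type martingale inequality in Lemma~\ref{lem:azumavar}, precisely along the lines of the sentence preceding the statement. First I would set up the centered partial-sum martingale: let $\mathcal{F}_k$ be the $\sigma$-algebra generated by $X_1,\ldots,X_k$ (with $\mathcal{F}_0$ trivial), and put $S_k := \sum_{i=1}^k \left(X_i - \Ex{X_i}\right)$. By independence of the $X_i$, the sequence $S_0 = 0, S_1, \ldots, S_m$ is a martingale adapted to $(\mathcal{F}_k)_{k}$, and $S_m = X - \Ex{X}$.

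Next I would check the two hypotheses of Lemma~\ref{lem:azumavar}. The bounded-difference condition holds with $c = 1$, since $X_k \in \{0,1\}$ and $\Ex{X_k} \in [0,1]$ give $|S_k - S_{k-1}| = |X_k - \Ex{X_k}| \leq 1$. The conditional second-moment condition holds with $c' = \Ex{X_1}$: by independence, $\Ex{(S_k - S_{k-1})^2 \mid \mathcal{F}_{k-1}} = \Var{X_k} = \Ex{X_k}\left(1 - \Ex{X_k}\right) \leq \Ex{X_k} = \Ex{X_1}$, where the last equality uses that the $X_i$ are identically distributed.

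Finally I would substitute $c = 1$ and $c' = \Ex{X_1}$ into Lemma~\ref{lem:azumavar}, which yields
\[
   \Pro{ |X - \Ex{X}| \geq \lambda } = \Pro{ |S_m - S_0| \geq \lambda } \leq \exp\left(-\frac{\lambda^2}{2\Ex{X_1} m + \lambda/3}\right),
\]
and then rewrite $2\Ex{X_1} m = 2\,\Ex{X}$ using $\Ex{X} = m\,\Ex{X_1}$, which is exactly the claimed bound. There is no real obstacle in this argument; the only points that warrant a moment's care are the variance estimate $\Var{X_k} \leq \Ex{X_k}$ (valid precisely because $X_k$ is $\{0,1\}$-valued) and the observation that $S_0 = 0$, so that the resulting tail bound is directly about $X - \Ex{X}$. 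The i.i.d.\ hypothesis is used only so that the single constant $\Ex{X_1}$ serves as $c'$; a version with non-identical means would instead require $c' = \max_i \Ex{X_i}$.
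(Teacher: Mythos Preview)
Your proposal is correct and follows exactly the approach indicated in the paper: apply Lemma~\ref{lem:azumavar} to the centered partial-sum martingale $S_k=\sum_{i\le k}(X_i-\Ex{X_i})$ with $c=1$ and $c'=\Ex{X_1}$, then use $m\,\Ex{X_1}=\Ex{X}$. The verification of the two hypotheses is precisely as you wrote.
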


\begin{lem}[{\cite[Theorem~A.1.15]{AlonSpencer}}]\label{lem:AlonSpencer}
Let $X$ have Poisson distribution. Then for any $0 < \epsilon < 1$,
\begin{align*}
 \Pro{ X \leq (1- \epsilon) \Ex{X} } &\leq \exp \left(- \frac{\epsilon^2 \Ex{X}}{2}  \right).
\end{align*}
Also, for any $x\geq 2e\Ex{X}$, it follows by Stirling's approximation that
\begin{align*}
 \Pro{ X \geq x } &\leq 2 \left(\frac{\Ex{X} e}{x}\right)^x.
\end{align*}
\end{lem}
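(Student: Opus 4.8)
The plan is to derive both inequalities from the exponential moment generating function of a Poisson variable, namely $\Ex{e^{tX}}=\exp\big(\mu(e^{t}-1)\big)$ where $\mu=\Ex{X}$: the first bound via a Chernoff argument applied to the lower tail, and the second via a direct term-by-term estimate of the Poisson tail sum.

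For the lower tail, I would fix $0<\epsilon<1$ and, for any $t>0$, apply Markov's inequality to $e^{-tX}$ to get $\Pro{X\leq(1-\epsilon)\mu}=\Pro{e^{-tX}\geq e^{-t(1-\epsilon)\mu}}\leq e^{t(1-\epsilon)\mu}\,\Ex{e^{-tX}}=\exp\big(\mu(e^{-t}-1+t(1-\epsilon))\big)$. Minimizing the exponent $g(t)=\mu(e^{-t}-1+t(1-\epsilon))$ over $t>0$ gives $e^{-t}=1-\epsilon$, i.e. $t=-\ln(1-\epsilon)>0$, and hence $\Pro{X\leq(1-\epsilon)\mu}\leq\exp\big(\mu(-\epsilon-(1-\epsilon)\ln(1-\epsilon))\big)$. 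It then remains to verify the elementary inequality $-\epsilon-(1-\epsilon)\ln(1-\epsilon)\leq-\epsilon^{2}/2$ on $(0,1)$: setting $f(\epsilon)=(1-\epsilon)\ln(1-\epsilon)+\epsilon-\epsilon^{2}/2$, one has $f(0)=0$ and $f'(\epsilon)=-\ln(1-\epsilon)-\epsilon\geq 0$ on $(0,1)$ because $-\ln(1-\epsilon)\geq\epsilon$, so $f\geq 0$, which yields the claimed bound.

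For the upper tail, I would estimate the probability directly. Using $k!\geq (k/e)^{k}$, $\Pro{X\geq x}=\sum_{k\geq\lceil x\rceil}e^{-\mu}\mu^{k}/k!\leq\sum_{k\geq\lceil x\rceil}(e\mu/k)^{k}$. For every $k\geq\lceil x\rceil\geq x\geq 2e\mu$ we have $e\mu/k\leq 1/2$, so $(e\mu/k)^{k}=(e\mu/k)^{x}(e\mu/k)^{k-x}\leq(e\mu/x)^{x}(1/2)^{k-x}$; summing the resulting geometric series gives $\Pro{X\geq x}\leq 2(e\mu/x)^{x}$. (If $x$ is not an integer one replaces $x$ by $\lceil x\rceil$ in the exponent and uses $e\mu/x<1$ together with $\lceil x\rceil\geq x$ to keep $(e\mu/\lceil x\rceil)^{\lceil x\rceil}\leq(e\mu/x)^{x}$.)

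I do not expect any substantial obstacle: this is a textbook Chernoff/Poisson-tail computation, so the only places requiring care are the two elementary real-analysis steps — establishing $f(\epsilon)\geq 0$ for the lower tail, and the geometric-series bookkeeping (including the non-integer $x$ case) for the upper tail.
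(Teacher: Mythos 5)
Your proof is correct, and it follows exactly the route the paper relies on: the lower-tail bound is the standard Chernoff/moment-generating-function argument behind the cited Alon--Spencer Theorem~A.1.15 (including the elementary inequality $-\epsilon-(1-\epsilon)\ln(1-\epsilon)\leq-\epsilon^2/2$), and the upper-tail bound is precisely the ``Stirling plus geometric series'' computation the paper alludes to, using $k!\geq(k/\ce)^k$ and the ratio bound $\ce\mu/k\leq 1/2$ for $k\geq x\geq 2\ce\mu$. Since the paper gives no further proof beyond this citation, there is nothing missing in your argument; the care you take with non-integer $x$ is a welcome extra detail.
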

\end{document}